\newcommand{\nc}{\newcommand}
\renewcommand{\AA}{{\mathbb{A}}}
\nc{\CC}{{\mathbb{C}}}
\nc{\LL}{{\mathbb{L}}}
\nc{\RR}{{\mathbb{R}}}
\renewcommand{\P}{{\mathbb{P}}}
\nc{\OO}{{\mathbb{O}}}
\nc{\QQ}{{\mathbb{Q}}}
\nc{\ZZ}{{\mathbb{Z}}}
\nc{\cA}{{\mathcal{A}}}
\nc{\cB}{{\mathcal{B}}}
\nc{\cC}{{\mathcal{C}}}
\nc{\cD}{{\mathcal{D}}}
\nc{\tcD}{{\tilde{\mathcal{D}}}}
\nc{\scD}{{\mathscr{D}}}
\nc{\cE}{{\mathcal{E}}}
\nc{\cF}{{\mathcal{F}}}
\nc{\cG}{{\mathcal{G}}}
\nc{\cH}{{\mathcal{H}}}
\nc{\cI}{{\mathcal{I}}}
\nc{\cJ}{{\mathcal{J}}}
\nc{\cK}{{\mathcal{K}}}
\nc{\cL}{{\mathcal{L}}}
\nc{\cM}{{\mathcal{M}}}
\nc{\cN}{{\mathcal{N}}}
\nc{\cO}{{\mathcal{O}}}
\nc{\cP}{{\mathcal{P}}}
\nc{\cQ}{{\mathcal{Q}}}
\nc{\cR}{{\mathcal{R}}}
\nc{\cS}{{\mathcal{S}}}
\nc{\cT}{{\mathcal{T}}}
\nc{\cU}{{\mathcal{U}}}
\nc{\cV}{{\mathcal{V}}}
\nc{\cW}{{\mathcal{W}}}
\nc{\cX}{{\mathcal{X}}}
\nc{\cY}{{\mathcal{Y}}}
\nc{\cZ}{{\mathcal{Z}}}
\nc{\rc}{{\mathrm{c}}}
\nc{\rQ}{{\mathrm{Q}}}
\nc{\rS}{{\mathrm{S}}}
\nc{\bA}{{\mathbf{A}}}
\nc{\bB}{{\mathbf{B}}}
\nc{\bC}{{\mathbf{C}}}
\nc{\bD}{{\mathbf{D}}}
\nc{\bE}{{\mathbf{E}}}
\nc{\bF}{{\mathbf{F}}}
\nc{\bG}{{\mathbf{G}}}
\nc{\bH}{{\mathbf{H}}}
\nc{\bI}{{\mathbf{I}}}
\nc{\bJ}{{\mathbf{J}}}
\nc{\bK}{{\mathbf{K}}}
\nc{\bL}{{\mathbf{L}}}
\nc{\bM}{{\mathbf{M}}}
\nc{\bN}{{\mathbf{N}}}
\nc{\bO}{{\mathbf{O}}}
\nc{\bP}{{\mathbf{P}}}
\nc{\bQ}{{\mathbf{Q}}}
\nc{\bR}{{\mathbf{R}}}
\nc{\bS}{{\mathbf{S}}}
\nc{\bT}{{\mathbf{T}}}
\nc{\bU}{{\mathbf{U}}}
\nc{\bV}{{\mathbf{V}}}
\nc{\bW}{{\mathbf{W}}}
\nc{\bX}{{\mathbf{X}}}
\nc{\bY}{{\mathbf{Y}}}
\nc{\bZ}{{\mathbf{Z}}}
\nc{\ba}{{\mathbf{a}}}
\nc{\bb}{{\mathbf{b}}}
\nc{\bc}{{\mathbf{c}}}
\nc{\bd}{{\mathbf{d}}}
\nc{\be}{{\mathbf{e}}}
\nc{\bg}{{\mathbf{g}}}
\nc{\bh}{{\mathbf{h}}}
\nc{\bi}{{\mathbf{i}}}
\nc{\bj}{{\mathbf{j}}}
\nc{\bk}{{\mathbf{k}}}
\nc{\bl}{{\mathbf{l}}}
\nc{\bm}{{\mathbf{m}}}
\nc{\bn}{{\mathbf{n}}}
\nc{\bo}{{\mathbf{o}}}
\nc{\bp}{{\mathbf{p}}}
\nc{\bq}{{\mathbf{q}}}
\nc{\br}{{\mathbf{r}}}
\nc{\bs}{{\mathbf{s}}}
\nc{\bt}{{\mathbf{t}}}
\nc{\bu}{{\mathbf{u}}}
\nc{\bv}{{\mathbf{v}}}
\nc{\bw}{{\mathbf{w}}}
\nc{\bx}{{\mathbf{x}}}
\nc{\by}{{\mathbf{y}}}
\nc{\bz}{{\mathbf{z}}}
\nc{\fA}{{\mathfrak{A}}}
\nc{\fB}{{\mathfrak{B}}}
\nc{\fC}{{\mathfrak{C}}}
\nc{\fD}{{\mathfrak{D}}}
\nc{\fE}{{\mathfrak{E}}}
\nc{\fF}{{\mathfrak{F}}}
\nc{\fG}{{\mathfrak{G}}}
\nc{\fH}{{\mathfrak{H}}}
\nc{\fI}{{\mathfrak{I}}}
\nc{\fJ}{{\mathfrak{J}}}
\nc{\fK}{{\mathfrak{K}}}
\nc{\fL}{{\mathfrak{L}}}
\nc{\fM}{{\mathfrak{M}}}
\nc{\fN}{{\mathfrak{N}}}
\nc{\fO}{{\mathfrak{O}}}
\nc{\fP}{{\mathfrak{P}}}
\nc{\fQ}{{\mathfrak{Q}}}
\nc{\fR}{{\mathfrak{R}}}
\nc{\fS}{{\mathfrak{S}}}
\nc{\fT}{{\mathfrak{T}}}
\nc{\fU}{{\mathfrak{U}}}
\nc{\fV}{{\mathfrak{V}}}
\nc{\fW}{{\mathfrak{W}}}
\nc{\fX}{{\mathfrak{X}}}
\nc{\fY}{{\mathfrak{Y}}}
\nc{\fZ}{{\mathfrak{Z}}}
\nc{\fa}{{\mathfrak{a}}}
\nc{\fb}{{\mathfrak{b}}}
\nc{\fc}{{\mathfrak{c}}}
\nc{\fd}{{\mathfrak{d}}}
\nc{\fe}{{\mathfrak{e}}}
\nc{\ff}{{\mathfrak{f}}}
\nc{\fg}{{\mathfrak{g}}}
\nc{\fh}{{\mathfrak{h}}}
\nc{\fj}{{\mathfrak{j}}}
\nc{\fk}{{\mathfrak{k}}}
\nc{\fl}{{\mathfrak{l}}}
\nc{\fm}{{\mathfrak{m}}}
\nc{\fn}{{\mathfrak{n}}}
\nc{\fo}{{\mathfrak{o}}}
\nc{\fp}{{\mathfrak{p}}}
\nc{\fq}{{\mathfrak{q}}}
\nc{\fr}{{\mathfrak{r}}}
\nc{\fs}{{\mathfrak{s}}}
\nc{\ft}{{\mathfrak{t}}}
\nc{\fu}{{\mathfrak{u}}}
\nc{\fv}{{\mathfrak{v}}}
\nc{\fw}{{\mathfrak{w}}}
\nc{\fx}{{\mathfrak{x}}}
\nc{\fy}{{\mathfrak{y}}}
\nc{\fz}{{\mathfrak{z}}}
\nc{\sA}{{\mathsf{A}}}
\nc{\sB}{{\mathsf{B}}}
\nc{\sC}{{\mathsf{C}}}
\nc{\sD}{{\mathsf{D}}}
\nc{\sE}{{\mathsf{E}}}
\nc{\sF}{{\mathsf{F}}}
\nc{\sG}{{\mathsf{G}}}
\nc{\sH}{{\mathsf{H}}}
\nc{\sI}{{\mathsf{I}}}
\nc{\sJ}{{\mathsf{J}}}
\nc{\sK}{{\mathsf{K}}}
\nc{\sL}{{\mathsf{L}}}
\nc{\sM}{{\mathsf{M}}}
\nc{\sN}{{\mathsf{N}}}
\nc{\sO}{{\mathsf{O}}}
\nc{\sP}{{\mathsf{P}}}
\nc{\sQ}{{\mathsf{Q}}}
\nc{\sR}{{\mathsf{R}}}
\nc{\sS}{{\mathsf{S}}}
\nc{\sT}{{\mathsf{T}}}
\nc{\sU}{{\mathsf{U}}}
\nc{\sV}{{\mathsf{V}}}
\nc{\sW}{{\mathsf{W}}}
\nc{\sX}{{\mathsf{X}}}
\nc{\sY}{{\mathsf{Y}}}
\nc{\sZ}{{\mathsf{Z}}}
\nc{\sa}{{\mathsf{a}}}
\nc{\sd}{{\mathsf{d}}}
\nc{\se}{{\mathsf{e}}}
\nc{\sg}{{\mathsf{g}}}
\nc{\sh}{{\mathsf{h}}}
\nc{\si}{{\mathsf{i}}}
\nc{\sj}{{\mathsf{j}}}
\nc{\sk}{{\mathsf{k}}}
\nc{\sm}{{\mathsf{m}}}
\nc{\sn}{{\mathsf{n}}}
\nc{\so}{{\mathsf{o}}}
\nc{\sq}{{\mathsf{q}}}
\nc{\sr}{{\mathsf{r}}}
\nc{\st}{{\mathsf{t}}}
\nc{\su}{{\mathsf{u}}}
\nc{\sv}{{\mathsf{v}}}
\nc{\sw}{{\mathsf{w}}}
\nc{\sx}{{\mathsf{x}}}
\nc{\sy}{{\mathsf{y}}}
\nc{\sz}{{\mathsf{z}}}
\nc{\oA}{{\overline{A}}}
\nc{\oB}{{\overline{B}}}
\nc{\oC}{{\overline{C}}}
\nc{\oD}{{\overline{D}}}
\nc{\oE}{{\overline{E}}}
\nc{\oF}{{\overline{F}}}
\nc{\oG}{{\overline{G}}}
\nc{\oH}{{\overline{H}}}
\nc{\oI}{{\overline{I}}}
\nc{\oJ}{{\overline{J}}}
\nc{\oK}{{\overline{K}}}
\nc{\oL}{{\overline{L}}}
\nc{\oM}{{\overline{M}}}
\nc{\oN}{{\overline{N}}}
\nc{\oO}{{\overline{O}}}
\nc{\oP}{{\overline{P}}}
\nc{\oQ}{{\overline{Q}}}
\nc{\oR}{{\overline{R}}}
\nc{\oS}{{\overline{S}}}
\nc{\oT}{{\overline{T}}}
\nc{\oU}{{\overline{U}}}
\nc{\oV}{{\overline{V}}}
\nc{\oW}{{\overline{W}}}
\nc{\oX}{{\overline{X}}}
\nc{\oY}{{\overline{Y}}}
\nc{\oZ}{{\overline{Z}}}
\nc{\oa}{{\overline{a}}}
\nc{\ob}{{\overline{b}}}
\nc{\oc}{{\overline{c}}}
\nc{\od}{{\overline{d}}}
\nc{\of}{{\overline{f}}}
\nc{\og}{{\overline{g}}}
\nc{\oh}{{\overline{h}}}
\nc{\oi}{{\overline{i}}}
\nc{\oj}{{\overline{j}}}
\nc{\ok}{{\overline{k}}}
\nc{\ol}{{\overline{l}}}
\nc{\om}{{\overline{m}}}
\nc{\on}{{\overline{n}}}
\nc{\oo}{{\overline{o}}}
\nc{\op}{{\mathrm{op}}}
\nc{\oq}{{\overline{q}}}
\nc{\os}{{\overline{s}}}
\nc{\ot}{{\overline{t}}}
\nc{\ou}{{\overline{u}}}
\nc{\ov}{{\overline{v}}}
\nc{\ow}{{\overline{w}}}
\nc{\ox}{{\overline{x}}}
\nc{\oy}{{\overline{y}}}
\nc{\oz}{{\overline{z}}}
\nc{\tA}{{\tilde{A}}}
\nc{\tB}{{\tilde{B}}}
\nc{\tC}{{\tilde{C}}}
\nc{\tD}{{\tilde{D}}}
\nc{\tE}{{\tilde{E}}}
\nc{\tF}{{\tilde{F}}}
\nc{\tG}{{\tilde{G}}}
\nc{\tH}{{\tilde{H}}}
\nc{\tI}{{\tilde{I}}}
\nc{\tJ}{{\tilde{J}}}
\nc{\tK}{{\tilde{K}}}
\nc{\tL}{{\tilde{L}}}
\nc{\tM}{{\tilde{M}}}
\nc{\tN}{{\tilde{N}}}
\nc{\tO}{{\tilde{O}}}
\nc{\tP}{{\tilde{P}}}
\nc{\tQ}{{\tilde{Q}}}
\nc{\tR}{{\tilde{R}}}
\nc{\tS}{{\tilde{S}}}
\nc{\tT}{{\tilde{T}}}
\nc{\tU}{{\tilde{U}}}
\nc{\tV}{{\tilde{V}}}
\nc{\tW}{{\tilde{W}}}
\nc{\tX}{{\tilde{X}}}
\nc{\tY}{{\tilde{Y}}}
\nc{\tZ}{{\tilde{Z}}}
\nc{\ta}{{\tilde{a}}}
\nc{\tb}{{\tilde{b}}}
\nc{\tc}{{\tilde{c}}}
\nc{\td}{{\tilde{d}}}
\nc{\te}{{\tilde{e}}}
\nc{\tf}{{\tilde{f}}}
\nc{\tg}{{\tilde{g}}}
\nc{\ti}{{\tilde{i}}}
\nc{\tj}{{\tilde{j}}}
\nc{\tk}{{\tilde{k}}}
\nc{\tl}{{\tilde{l}}}
\nc{\tm}{{\tilde{m}}}
\nc{\tn}{{\tilde{n}}}
\nc{\tp}{{\tilde{p}}}
\nc{\tq}{{\tilde{q}}}
\nc{\tr}{{\tilde{r}}}
\nc{\ts}{{\tilde{s}}}
\nc{\tu}{{\tilde{u}}}
\nc{\tv}{{\tilde{v}}}
\nc{\tw}{{\tilde{w}}}
\nc{\tx}{{\tilde{x}}}
\nc{\ty}{{\tilde{y}}}
\nc{\tz}{{\tilde{z}}}
\nc{\hA}{{\hat{A}}}
\nc{\hB}{{\hat{B}}}
\nc{\hC}{{\hat{C}}}
\nc{\hD}{{\hat{D}}}
\nc{\hE}{{\hat{E}}}
\nc{\hF}{{\hat{F}}}
\nc{\hG}{{\hat{G}}}
\nc{\hH}{{\hat{H}}}
\nc{\hI}{{\hat{I}}}
\nc{\hJ}{{\hat{J}}}
\nc{\hK}{{\hat{K}}}
\nc{\hL}{{\hat{L}}}
\nc{\hM}{{\hat{M}}}
\nc{\hN}{{\hat{N}}}
\nc{\hO}{{\hat{O}}}
\nc{\hP}{{\hat{P}}}
\nc{\hQ}{{\hat{Q}}}
\nc{\hR}{{\hat{R}}}
\nc{\hS}{{\hat{S}}}
\nc{\hT}{{\hat{T}}}
\nc{\hU}{{\hat{U}}}
\nc{\hV}{{\hat{V}}}
\nc{\hW}{{\hat{W}}}
\nc{\hX}{{\hat{X}}}
\nc{\hY}{{\hat{Y}}}
\nc{\hZ}{{\hat{Z}}}
\nc{\ha}{{\hat{a}}}
\nc{\hb}{{\hat{b}}}
\nc{\hc}{{\hat{c}}}
\nc{\hd}{{\hat{d}}}
\nc{\he}{{\hat{e}}}
\nc{\hf}{{\hat{f}}}
\nc{\hg}{{\hat{g}}}
\nc{\hh}{{\hat{h}}}
\nc{\hi}{{\hat{i}}}
\nc{\hj}{{\hat{j}}}
\nc{\hk}{{\hat{k}}}
\nc{\hl}{{\hat{l}}}
\nc{\hm}{{\hat{m}}}
\nc{\hn}{{\hat{n}}}
\nc{\ho}{{\hat{o}}}
\nc{\hp}{{\hat{p}}}
\nc{\hq}{{\hat{q}}}
\nc{\hr}{{\hat{r}}}
\nc{\hs}{{\hat{s}}}
\nc{\hu}{{\hat{u}}}
\nc{\hv}{{\hat{v}}}
\nc{\hw}{{\hat{w}}}
\nc{\hx}{{\hat{x}}}
\nc{\hy}{{\hat{y}}}
\nc{\hz}{{\hat{z}}}
\nc{\eps}{\varepsilon}
\nc{\lan}{\big\langle}
\nc{\ran}{\big\rangle}
\nc{\kk}{{\Bbbk}}
\def\bw#1#2{\textstyle{\bigwedge\hskip-0.9mm^{#1}}\hskip0.2mm{#2}}
\DeclareMathOperator{\Hom}{\mathrm{Hom}}
\DeclareMathOperator{\Ext}{\mathrm{Ext}}
\DeclareMathOperator{\cRHom}{\mathrm{R}\mathcal{H}\!\mathit{om}}
\DeclareMathOperator{\Tor}{\mathrm{Tor}}
\DeclareMathOperator{\Spec}{\mathrm{Spec}}
\DeclareMathOperator{\Bl}{\mathrm{Bl}}
\DeclareMathOperator{\Sing}{\mathrm{Sing}}
\DeclareMathOperator{\Br}{\mathrm{Br}}
\DeclareMathOperator{\Ker}{\mathrm{Ker}}
\DeclareMathOperator{\Cone}{\mathrm{Cone}}
\DeclareMathOperator{\Gr}{\mathrm{Gr}}
\DeclareMathOperator{\id}{\mathrm{id}}
\DeclareMathOperator{\codim}{\mathrm{codim}}
\DeclareMathOperator{\pr}{\mathrm{pr}}
\DeclareMathOperator{\Fun}{\mathrm{Fun}}
\newcommand{\Db}{\mathrm{D}^{\mathrm{b}}}
\newcommand{\Dqc}{\mathrm{D}_{\mathrm{qc}}}
\newcommand{\pf}{{\mathrm{perf}}}
\newcommand{\Dperf}{\mathrm{D}^{\pf}}
\theoremstyle{plain}
\newtheorem{theorem}{Theorem}[section]
\newtheorem{lemma}[theorem]{Lemma}
\newtheorem{proposition}[theorem]{Proposition}
\newtheorem{corollary}[theorem]{Corollary}
\theoremstyle{definition}
\newtheorem{definition}[theorem]{Definition}
\newtheorem{example}[theorem]{Example}
\theoremstyle{remark}
\newtheorem{remark}[theorem]{Remark}
\title{Simultaneous categorical resolutions}
\author{Alexander Kuznetsov}
\address{{\sloppy
\parbox{0.99\textwidth}{
Algebraic Geometry Section, Steklov Mathematical Institute of Russian Academy of Sciences,\\
8 Gubkin str., Moscow 119991 Russia
\\[5pt]
Laboratory of Algebraic Geometry, National Research University Higher School of Economics, Russia
}\bigskip}}
\email{akuznet@mi-ras.ru}
\date{}
\thanks{I was partially supported by the HSE University Basic Research Program.}
\dedicatory{To Olivier Debarre, my friend and coauthor}
\begin{document}

\begin{abstract}
We introduce the notion of a simultaneous categorical resolution of singularities, 
a categorical version of simultaneous resolutions of rational double points of surface degenerations.
Furthermore, we suggest a construction of simultaneous categorical resolutions which, in particular, 
applies to the case of a flat projective 1-dimensional family of varieties of arbitrarily high even dimension 
with ordinary double points in the total space and central fiber.
As an ingredient of independent interest, we check that the property of a geometric triangulated category linear over a base 
to be relatively smooth and proper can be verified fiberwise.
As an application we construct a smooth and proper family of~$K3$ categories with general fiber the~$K3$ category of a smooth cubic fourfold
and special fiber the derived category of the~$K3$ surface of degree~6 associated with a nodal cubic fourfold.
\end{abstract}

\maketitle


\section{Introduction}

Let $f \colon X \to B$ be a morphism from a scheme~$X$ to a pointed base scheme $(B,o)$.
We denote by
\begin{equation}
\label{eq:notationn-b-o}
B^o \coloneqq B \setminus \{o\},
\qquad
X^o \coloneqq X \times_B B^o = f^{-1}(B^o),
\qquad\text{and}\qquad 
X_o \coloneqq X \times_B \{o\} = f^{-1}(o)
\end{equation}
the punctured base, its preimage, and the central fiber.

Assume~$B$ is smooth and~$X^o$ is smooth over~$B^o$.
A {\sf simultaneous resolution} of~$(X,X_o)$ is a proper birational morphism~$\pi \colon \tX \to X$ 
such that the morphism 
\begin{equation*}
f \circ \pi \colon \tX \to B 
\end{equation*}
is smooth.
It follows that~$\tX$ and its central fiber~$\tX_o$ are both smooth; 
thus, the morphism~$\pi$ simultaneously resolves singularities of the total space~$X$ and central fiber~$X_o$.
We also usually assume that the morphism~$\pi$ is an isomorphism over~$B^o$.

Simultaneous resolutions are usually hard to construct.
For instance, if one resolves~$X$ by blowing up a subvariety~$Z \subset X$ contained in the central fiber,
the central fiber of the blowup $\Bl_Z(X)$ is the union 
\begin{equation*}
 (\Bl_Z(X))_o = \Bl_Z(X_o) \cup E,
\end{equation*}
where $E$ is the exceptional divisor of the blowup;
in particular, the central fiber is not smooth, and hence the morphism $f \circ \pi$ is not smooth either.
Still, surprisingly, simultaneous resolutions exist in some cases.

\begin{example}[\cite{At}]
\label{example:geometric}
 Let $f \colon X \to B$ be a flat family of complex surfaces over a smooth pointed curve~$(B,o)$
 such that~$X^o$ is smooth over~$B^o$, the total space~$X$ is smooth, 
 and the central fiber~$X_o$ has a single ordinary double point~$x_o \in X_o$.
 Let $B' \to B$ be a double covering branched at~$o$ and let 
 \begin{equation*}
  X' \coloneqq X \times_B B'
 \end{equation*}
 be the base change.
 Now the central fiber~$X'_o \cong X_o$ and the total space~$X'$ both have ordinary double point at~$x_o$.
 Let~$\pi \colon \tX' \to X'$
 be a \emph{small resolution} of~$X'$ (it may or may not be projective, but it always exists as an algebraic space, see~\cite{Artin}).
 It is easy to see that the central fiber of~$\tX'$ is the blowup of~$X'_o$ at~$x_o$, hence is smooth.
 Therefore, the morphism~$\tX' \to B'$ is smooth, and~$\pi$ is a simultaneous resolution of $(X',X'_o)$.
\end{example}

This example is a particular case of the general result of Brieskorn (see also~\cite{Tyurina}) .

\begin{theorem}[\cite{Bri}]
Let $f \colon X \to B$ be a versal deformation of a surface~\mbox{$X_o = f^{-1}(o)$} that has rational double points as singularities.
Then there exists a branched Galois covering~\mbox{$B' \to B$} such that~\mbox{$X' = X \times_B B' \to B'$} 
admits a simultaneous resolution of singularities.
\end{theorem}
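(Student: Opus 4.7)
The plan is to follow Brieskorn's Lie-theoretic approach, reducing the global statement to a local one via the ADE classification of rational double points and then exploiting the correspondence between those singularities and simple complex Lie algebras.

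First, I would reduce to the case of a single singularity. The singular locus of $X_o$ consists of finitely many rational double points of ADE types $\Gamma_1, \dots, \Gamma_k$. Since versal deformations of isolated surface singularities decompose (analytically- or formally-locally around~$o$) as products of versal deformations of each singular point, the base $B$ factors as a product $\prod B_i$, where $B_i$ is the base of a versal deformation of the $i$-th singularity. Consequently, it suffices to exhibit, for each $\Gamma_i$, a branched Galois covering $B'_i \to B_i$ together with a simultaneous resolution of the pulled-back family; taking products of the covers yields the desired $B' \to B$.

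Second, I would invoke Brieskorn's realization of the semi-universal deformation of an ADE singularity. For a simple complex Lie algebra $\fg = \fg_\Gamma$ of type $\Gamma \in \{A_n, D_n, E_6, E_7, E_8\}$, fix a Cartan subalgebra $\fh \subset \fg$, let $W$ be the Weyl group, and consider the adjoint quotient $\chi \colon \fg \to \fh / W$. Brieskorn proved that for a subregular nilpotent element $e \in \fg$ and a Kostant-Slodowy transverse slice $S \subset \fg$ through $e$, the restriction $\chi|_S \colon S \to \fh / W$ is a semi-universal deformation of the ADE surface singularity of type $\Gamma$, with central fiber $\chi^{-1}(0) \cap S$ analytically isomorphic to the singularity itself.

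Third, I would construct the simultaneous resolution via the Grothendieck-Springer map. Let $\tilde\fg = \{(x, \fb) : x \in \fb\}$ with its smooth projection $\tilde\chi \colon \tilde\fg \to \fh$, fitting into a commutative diagram over $\fh \to \fh/W$. Form the fiber product $\tilde S \coloneqq S \times_{\fg} \tilde\fg$; its projection $\tilde S \to \fh$ is smooth, with central fiber the minimal resolution of the corresponding rational double point, and is precisely a simultaneous resolution of $S \times_{\fh/W} \fh \to \fh$. Since $f$ is versal, there is a classifying map $B_i \to \fh/W$ through which $f$ is pulled back from $\chi|_S$; setting $B'_i \coloneqq B_i \times_{\fh/W} \fh$ and pulling back $\tilde S$ produces the required Galois cover and simultaneous resolution of~$X \times_B B'_i$.

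The main obstacle is Brieskorn's identification of the transverse slice with the semi-universal deformation of the ADE singularity: this requires a careful analysis of the tangent space to the deformation functor, verification via Schlessinger's criterion, and the numerical coincidence $\dim S = \dim \fh = \mu(\Gamma)$ with the Milnor number of the singularity. Globally, one must further check that the locally constructed Galois covers and simultaneous resolutions are compatible across the singular points; this is automatic because the minimal resolution of a rational double point is unique and the construction is functorial in the base.
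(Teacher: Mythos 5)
The paper does not prove this theorem: it is quoted directly from Brieskorn's 1971 ICM address~\cite{Bri}, so there is no ``paper's own proof'' to compare against. Your outline is a faithful high-level reconstruction of the Lie-theoretic argument in that reference (Slodowy slice through a subregular nilpotent, the adjoint quotient $\chi\colon\fg\to\fh/W$ realizing the semi-universal deformation, and the Grothendieck--Springer map $\tilde\fg\to\fh$ supplying the simultaneous resolution after the Weyl-group base change $\fh\to\fh/W$), and the skeleton is correct.

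Two points are stated a bit too loosely. First, for a deformation that is versal but not mini-versal, the base $B$ does not literally ``factor as a product $\prod B_i$''; what is true is that there is a smooth classifying map from $(B,o)$ to the product of the semi-universal bases of the local germs (using vanishing of the relevant obstruction groups for rational double points), and the branched Galois cover is pulled back along this smooth map. Second, the ``gluing is automatic by functoriality'' remark elides the real reason the local constructions patch together: away from $\Sing(X_o)$ the total space is already smooth over $B$ in a neighborhood of $X_o$, so the modification is concentrated near the singular points and trivially agrees with the identity elsewhere; one should also note that the resulting $\tX'$ is in general only an algebraic (or complex-analytic) space rather than a projective scheme, consistent with the discussion of small resolutions via~\cite{Artin} in Example~\ref{example:geometric}.
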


This result is very useful for geometry of surfaces.
However, nothing similar is true in higher dimensions, and the reason for this is the lack of small resolutions
(particularly, for ordinary double points).
The goal of this paper is to show that replacing geometric resolutions by categorical resolutions, 
one can preserve some of these results in higher dimensions.

Let~$\kk$ be a base field.
Recall that a pretriangulated differential graded category~$\scD$ is {\sf smooth over~$\kk$} if its diagonal bimodule 
considered as an object of the category of bimodules~$\scD \otimes_\kk \scD^\op$ is perfect.
Similarly, a DG-enhanced triangulated category~$\cD$ is smooth over~$\kk$ if its enhancement~$\scD$ is;
a similar definition works for other types of enhancements,
see, e.g., Definition~\ref{def:sp}, where we use ``geometric enhancement''.
Finally, a triangulated category is {\sf proper} over~$\kk$ if for any its objects~$F_1$, $F_2$ 
the graded vector space~$\Ext^\bullet(F_1,F_2)$ has finite total dimension.

\begin{definition}[{\cite{K08,KL}}]
\label{def:cr}
A {\sf categorical resolution} of a proper $\kk$-scheme~$X$ is a triple~$(\cD,\pi^*,\pi_*)$, 
where
\begin{itemize}
 \item 
 $\cD$ is an enhanced $\kk$-linear triangulated category, and
 \item 
 $\pi^* \colon \Dperf(X) \to \cD$ and $\pi_* \colon \cD \to \Db(X)$ is a pair of enhanced $\kk$-linear triangulated functors,
\end{itemize}
such that 
\begin{enumerate}
 \item 
 $\cD$ is smooth and proper over~$\kk$,
 \item 
 $\pi^*$ is left adjoint to $\pi_*$, 
 i.e., $\Hom(\pi^*F,G) \cong \Hom(F,\pi_*G)$ for all~$F \in \Dperf(X)$, $G \in \cD$, 
 and 
 \item 
 $\pi_* \circ \pi^* \cong \id_{\Dperf(X)}$.
\end{enumerate}
\end{definition}

Of course, if $\pi \colon \tX \to X$ is a (geometric) resolution of singularities the category $\cD = \Db(\tX) \simeq \Dperf(\tX)$ 
together with the derived pullback and pushforward functors provide a categorical resolution of~$X$, 
if~$X$ has rational singularities (see the discussion of this issue in~\cite{KL}).

Examples of categorical resolutions constructed in~\cite{K08} show that this notion is much more flexible than the geometric notion,
for instance one can construct \emph{crepant} categorical resolutions~\cite[Definition~3.4]{K08} 
in the situations when crepant geometric resolutions do not exist.
In particular, one can construct crepant categorical resolutions of ordinary double points in any dimension (see Example~\ref{ex:nodal}).

Extending the definition of a categorical resolution to the setup of simultaneous resolutions,
one naturally arrives at the central definition of this paper.
We use notation~\eqref{eq:notationn-b-o}.

\begin{definition}
\label{def:scr}
Let $f \colon X \to B$ be a flat proper morphism to a pointed base scheme $(B,o)$ with central fiber~$X_o$ such that~$X^o$ is smooth over~$B^o$.
A {\sf simultaneous categorical resolution} of~$(X,X_o)$
is a triple~$(\cD,\pi^*,\pi_*)$, 
where
\begin{itemize}
 \item 
 $\cD$ is an enhanced \emph{$B$-linear} triangulated category, and
 \item 
 $\pi^* \colon \Dperf(X) \to \cD$ and $\pi_* \colon \cD \to \Db(X)$ is a pair of enhanced \emph{$B$-linear} triangulated functors, 
\end{itemize}
such that 
\begin{enumerate}
 \item 
 $\cD$ is smooth and \emph{proper} over~$B$,
 \item 
 $\pi^*$ is left adjoint to~$\pi_*$ (in the same sense as in Definition~\ref{def:cr}),
 and 
 \item 
 $\pi_* \circ \pi^* \cong \id_{\Dperf(X)}$.
\end{enumerate}
\end{definition}

We refer to~\cite[\S2.6]{K06} and~\cite[\S2]{P19} for the definition of $B$-linear categories and functors 
(see also~\S\ref{section:linear-cats} below);
for now just note that the derived category of any $B$-scheme is $B$-linear, 
as well as any its subcategory closed under tensor products with pullbacks of perfect complexes on~$B$.
Similarly, a functor is $B$-linear if it commutes with tensor products by pullbacks of perfect complexes on~$B$.
Note also that if $B = \Spec(R)$ is an affine scheme, a $B$-linear structure is equivalent to an~$R$-linear structure 
in the standard meaning (i.e., $\Hom$-spaces are $R$-modules and the composition law is $R$-linear).

The difference between Definition~\ref{def:scr} and Definition~\ref{def:cr} is, 
first, in $B$-linearity of the resolving category~$\cD$ and functors~$\pi^*$, $\pi_*$, and,
second, in smoothness and properness of~$\cD$ \emph{over~$B$}
(see~\S\ref{subsection:sp} for a discussion of these notions).
As before, a geometric simultaneous resolution provides a categorical one (if~$X$ has rational singularities).

The results of~\cite{K11} (see also~\cite{P19} for a different approach and~\S\ref{subsection:base-change} for a reminder) 
show that given a $B$-linear category~$\cD$ one can construct its base change~$\cD_{B'}$ along any morphism $B' \to B$;
in particular the fiber~$\cD_b$ of~$\cD$ over each point~$b \in B$ is defined.
If~$\cD$ is smooth over~$B$ then~$\cD_b$ is smooth over the residue field of the point~$b$ and if~$\cD$ is proper over~$B$ then~$\cD_b$ is proper.
Thus, in the context of a simultaneous categorical resolution of singularities 
we obtain a family of smooth and proper categories~$\cD_b$ parameterized by the base~$B$.
If, moreover, we assume that the base change of functors~$\pi^*$ and~$\pi_*$ along the inclusion~$B^o \to B$ are equivalences
(this assumption is analogous to the assumption that~$\pi$ is an isomorphism over~$B^o$ which is standard in the geometric context)
then the family of categories~$\cD_b$ agrees with the family of derived categories~$\Db(X_b)$ of fibers of~$f$ over~$B^o$;
in particular, the central fiber~$\cD_o$ of a simultaneous categorical resolution 
provides a \emph{smooth extension} of this family across the point~$o \in B$.

\begin{remark}
It is easy to see that if~$(\cD,\pi^*,\pi_*)$ is a simultaneous categorical resolution of~$(X,X_o)$,
the triple~$(\cD_o,\pi_o^*,\pi_{o*})$ obtained by base change, provides a categorical resolution for~$X_o$.
\end{remark}

\begin{remark}
One can also define a simultaneous categorical resolution for a pair~$(\cC,\cC_o)$,
where~$\cC$ is a~$B$-linear category over a pointed scheme~$(B,o)$
such that~$\cC_{B^o}$ is smooth and proper over~$B^o$ and~$\cC_o$ is the central fiber of~$\cC$.
An example of such situation is provided by Corollary~\ref{cor:cubic} below.
\end{remark}

The main result of the paper is a construction of simultaneous categorical resolutions in a number of new interesting cases.
The most general form of our result is stated in Theorem~\ref{theorem:simultaneous}.
As its formulation is a bit technical, we do not reproduce it here;
instead, we state here its corollary for nodal degenerations which is interesting by itself 
(in Example~\ref{ex:scr} we list some other special situations where the general result can be applied).

\begin{theorem}
\label{theorem:simultaneous-nodal}
Let~$\kk$ be a field of characteristic not equal to~$2$.
Let $f \colon X \to B$ be a flat projective morphism to a smooth pointed $\kk$-curve $(B,o)$ such that~$X^o$ is smooth over~$B^o$.
Assume the total space~$X$ and the central fiber~$X_o$ both have an ordinary double point at~$x_o \in X_o \subset X$ and are smooth elsewhere.
If $\dim(X_o)$ is even, then after a possible quadratic extension of the base field~$\kk$,
there is a simultaneous categorical resolution of~$(X,X_o)$.
\end{theorem}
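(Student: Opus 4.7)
The plan is to reduce the statement to the general Theorem~\ref{theorem:simultaneous}: construct a partial resolution of $X$ over $B$ on which the spinor-bundle categorical construction can be performed, and then invoke the general machinery.

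First, since $X$ has an ordinary double point at $x_o$ and $\dim X = \dim X_o + 1 = 2n+1$ is odd, the blowup $\pi \colon \tilde X = \Bl_{x_o}(X) \to X$ is a \emph{geometric} resolution: $\tilde X$ is smooth, the exceptional divisor $Q \subset \tilde X$ is a smooth quadric of even dimension $2n$, and the proper transform $\hat X_o = \Bl_{x_o}(X_o)$ is smooth (as $X_o$ has an ODP in even dimension $2n$) and meets $Q$ transversally along a smooth hyperplane section $Q' \subset Q$ of dimension $2n-1$. However, the composition $\tilde X \to B$ is \emph{not} smooth, since its central fiber is the reducible variety $\hat X_o \cup Q$. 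This is the geometric obstruction that forces the use of a categorical framework.

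Next, I would follow the standard construction of a crepant categorical resolution of an even-dimensional node (cf.\ Example~\ref{ex:nodal} and~\cite{K08}): on the even-dimensional quadric $Q^{2n}$ there are two spinor bundles $S^\pm$, which are individually defined over $\kk$ only after a possible quadratic extension (the discriminant extension of $Q$); this accounts for the quadratic extension in the statement. After such an extension, fixing one spinor bundle $S^+$ on $Q$ defines a $B$-linear subcategory $\cD \subset \Db(\tilde X)$ together with induced functors $\pi^* \colon \Dperf(X) \to \cD$ and $\pi_* \colon \cD \to \Db(X)$. The general Theorem~\ref{theorem:simultaneous} guarantees, under the conditions met here---a smooth blowup of the total space whose exceptional divisor is a smooth even-dimensional quadric meeting the proper transform of the central fiber along a hyperplane section---that this construction yields a simultaneous categorical resolution in the sense of Definition~\ref{def:scr}.

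The main obstacle, and the technical heart of Theorem~\ref{theorem:simultaneous}, is verifying smoothness and properness of $\cD$ over $B$. Properness is inherited from the properness of $\tilde X \to B$. For relative smoothness the plan is to invoke the fiberwise smoothness criterion advertised in the abstract: it reduces the claim to checking that each fiber $\cD_b$ is smooth over the residue field at $b$. For $b \in B^o$ one has $\cD_b \simeq \Db(X_b)$, smooth by hypothesis; for $b = o$ the fiber $\cD_o$ coincides with the classical categorical resolution of the nodal $X_o$ from~\cite{K08}, whose smoothness is established there. Axioms (2) and (3) of Definition~\ref{def:scr} then follow formally from the blowup adjunction combined with standard properties of the spinor subcategory, completing the reduction to Theorem~\ref{theorem:simultaneous}.
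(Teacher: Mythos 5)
Your overall strategy matches the paper: blow up the node, run the spinor-bundle construction of~\cite{K08} on the exceptional quadric, then invoke Theorem~\ref{theorem:simultaneous} (the paper factors this through Theorem~\ref{theorem:nodal-general}, which is exactly your reduction packaged once more). The quadratic extension is indeed needed to split the discriminant double cover of the quadric so that a spinor bundle is defined over the base field, and your geometric setup (smooth $\tilde X$, exceptional divisor $Q^{2n}$, proper transform $\hat X_o$ meeting $Q$ in a smooth hyperplane section) is accurate.

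Two points deserve sharpening. First, you do not verify the hypotheses \eqref{eq:ca1} and \eqref{eq:Lefschetz-e0} of Theorem~\ref{theorem:simultaneous}, and this is precisely where the even-dimensionality of $X_o$ enters: on the \emph{even}-dimensional quadric $Q^{2n}$, the spinor Lefschetz decomposition has $\cA_0 = \cA_{-1} = \langle \cS, \cO \rangle$, and restricting it to the hyperplane section $Q^{2n-1}$ recovers the full odd-dimensional Lefschetz decomposition~\eqref{eq:db-quadric-odd} — this is what establishes \eqref{eq:ca1} and \eqref{eq:Lefschetz-e0} (and what fails for odd-dimensional $X_o$). Second, and more importantly, your gloss ``for $b=o$ the fiber $\cD_o$ coincides with the classical categorical resolution of the nodal $X_o$'' papers over the central technical difficulty. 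A priori, $\cD_o$ is a subcategory of $\Dperf(\tilde X_o)$, where $\tilde X_o = \hat X_o \cup Q$ is a \emph{reducible} normal-crossing scheme (this is exactly the geometric obstruction you pointed out). The classical resolution of~\cite{K08}, by contrast, lives inside $\Db(\hat X_o)$. The paper's key technical input is Proposition~\ref{prop:cc-cc2-equivalence}, which uses the equality $\cA_{-1} = \cA_0$ and the full faithfulness of restriction to the intersection to show that the relevant subcategory of $\Dperf(D_1 \cup D_2)$ is equivalent to a subcategory of $\Dperf(D_2)$ alone — this is what transfers $\cD_o$ onto the single smooth proper component $\hat X_o$. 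Without some such argument, the claimed identification of $\cD_o$ with a smooth proper category is unjustified. (Your observation that properness of $\cD$ over $B$ already follows from $\tilde X \to B$ being flat and proper via Lemma~\ref{lemma:sp-geometric} is correct, and in fact slightly more direct than the paper's route, which establishes both smoothness and properness fiberwise via Theorem~\ref{theorem:sp}.)
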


When applied in the case of relative dimension~2, the constructed categorical resolution is equivalent 
to the geometric simultaneous resolution described in Example~\ref{example:geometric}
(in that case a quadratic extension of the base field may also be necessary to make a small resolution defined).
The assumption that both~$X$ and~$X_o$ have ordinary double points at~$x_o$ plays the same role,
and as in Example~\ref{example:geometric} it can be achieved by the double covering trick, if the original total space~$X$ is smooth at~$x_o$.

To prove Theorem~\ref{theorem:simultaneous-nodal} (and the more general Theorem~\ref{theorem:simultaneous})
we apply the construction of a categorical resolution of singularities from~\cite{K08} (which we remind in Theorem~\ref{theorem:categorical})
to an appropriate blowup~$\pi \colon \tX \to X$; it gives us a $B$-linear triangulated subcategory~$\cD \subset \Db(\tX)$.
It easily follows from the construction that the base change of the category~$\cD$ to the open subscheme~$B^o \subset B$ 
is equivalent to~$\Db(X^o)$;
in particular it is smooth and proper over~$B^o$.
So, the main assertion of the theorem is that~$\cD$ is smooth and proper over a neighborhood of the central point~$o \in B$.

To deduce this we prove a result of independent interest, Theorem~\ref{theorem:sp}, 
saying that if~$\cD$ is a \emph{geometric} $B$-linear category then (under appropriate assumptions) 
the subset~$B_{\mathrm{sm,pr}} \subset B$
of points~$b \in B$ such that~$\cD_b$ is smooth and proper (over the residue field of the point~$b$) is open,
and moreover, the base change category~$\cD_{B_{\mathrm{sm,pr}}}$ is smooth and proper over~$B_{\mathrm{sm,pr}}$. 
Thanks to this result, to prove Theorem~\ref{theorem:simultaneous-nodal}, we just need to check that the central fiber~$\cD_o$
of the constructed categorical resolution~$\cD$ of~$X$ is smooth and proper.

We do this in two steps.
First, we identify (the perfect part of) the category~$\cD_o$ as an explicit subcategory of the central fiber of the family~$\tX \to B$,
which by construction is a reducible scheme with two components, the exceptional divisor~$E \subset \tX$ of the blowup~$\tX \to X$,
and the strict transform~$X'_o \subset \tX$ of the central fiber~$X_o \subset X$.
Then we check that this subcategory is in fact equivalent to an admissible subcategory of one of the components, $X'_o$;
again, this is based on a more general result proved in Proposition~\ref{prop:cc-cc2-equivalence}, which is also interesting by itself.
Since~$X'_o$ is smooth and proper by assumption, it follows that the category~$\cD_o$ is smooth and proper,
hence~$\cD$ is smooth and proper over~$B$ and thus provides a simultaneous categorical resolution for~$(X,X_o)$.
This completes the proof of the theorem.

We expect Theorem~\ref{theorem:simultaneous-nodal} to have many applications, 
similar to those of geometric simultaneous resolutions for surfaces.
To illustrate this we provide one sample application in the case of cubic fourfolds
(see~\S\ref{sec:cubic4} for a reminder about K3 categories and K3 surfaces associated to cubic fourfolds).

\begin{corollary}
\label{cor:cubic}
Let~$\kk$ be an algebraically closed field of characteristic not equal to~$2$.
Let~$Y$ be a cubic fourfold with one ordinary double point.
There exists a smooth pointed curve~$(B,o)$, 
a family of cubic fourfolds~$X \to B$ smooth over~$B^o$ and with central fiber isomorphic to~$Y$, 
and a $B$-linear category~$\cA$ smooth and proper over~$B$ 
such that 
\begin{itemize}
\item 
for $b \ne o$ the fiber~$\cA_b$ is equivalent to the $K3$ category of the smooth cubic fourfold~$X_b$,
\item 
the central fiber~$\cA_o$ is equivalent to the derived category of the smooth $K3$ surface of degree~$6$
that provides a categorical resolution for the~$K3$ category of~$Y$.
\end{itemize}
\end{corollary}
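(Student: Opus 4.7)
The plan is to apply Theorem~\ref{theorem:simultaneous-nodal} to a one-parameter smoothing of~$Y$ and then pass to the relative K3 subcategory. Since the space of cubic fourfolds is irreducible with generic element smooth, I can choose a smooth pointed curve~$(B,o)$ and a flat projective family~$\bar X \to B$ of cubic fourfolds with~$\bar X_o \cong Y$, smooth over~$B^o$, and with~$\bar X$ smooth at the node~$x_o$ of~$Y$ (a general pencil through~$Y$ suffices, as the locus of nodal cubics is a divisor in the parameter space). As~$\kk$ is algebraically closed of characteristic~$\ne 2$, I apply the double covering trick of Example~\ref{example:geometric}: passing to a double cover~$B' \to B$ branched at~$o$ and setting~$X \coloneqq \bar X \times_B B'$, a local computation in the transverse coordinate shows that both~$X$ and~$X_o \cong Y$ acquire an ordinary double point at~$x_o$ and remain smooth elsewhere, while~$X \to B'$ stays a flat family of cubic fourfolds smooth over~$(B')^o$.

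Theorem~\ref{theorem:simultaneous-nodal}, applicable since~$\dim X_o = 4$ is even and no further quadratic extension of~$\kk$ is required, then supplies a simultaneous categorical resolution~$(\cD,\pi^*,\pi_*)$ of~$(X,X_o)$: a $B'$-linear, smooth and proper category~$\cD$ with~$\cD_{(B')^o} \simeq \Db(X_{(B')^o})$ via~$(\pi^*,\pi_*)$. Using the relative hyperplane class~$\cO_X(1)$ coming from the embedding into~$\P^5_{B'}$, I define
\[
\cA \coloneqq \lan \pi^*\cO_X,\; \pi^*\cO_X(1),\; \pi^*\cO_X(2) \ran^{\perp} \subset \cD.
\]
Since~$\pi^*$ is fully faithful and~$(\cO_X(i))_{i=0,1,2}$ form a relatively exceptional collection on the family of cubic fourfolds, their pullbacks span a~$B'$-linear admissible exceptional collection in~$\cD$. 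Consequently~$\cA$ is a~$B'$-linear admissible subcategory and, as a semiorthogonal summand, inherits smoothness and properness over~$B'$ from~$\cD$.

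It remains to identify the fibers. For~$b \in (B')^o$, the equivalence~$\cD_b \simeq \Db(X_b)$ matches~$\pi^*\cO_X(i)|_b$ with~$\cO_{X_b}(i)$, so~$\cA_b$ is identified with the K3 category of the smooth cubic fourfold~$X_b$. At the central point, the remark after Definition~\ref{def:scr} gives that~$\cD_o$ is a categorical resolution of~$Y$, and the induced semiorthogonal decomposition presents~$\cA_o$ as a categorical resolution of the singular K3 category~$\cA_Y$. The main obstacle is to match this abstract resolution with~$\Db(S)$ for the smooth~$K3$ surface~$S$ of degree~$6$ associated to~$Y$ (see~\S\ref{sec:cubic4}). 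I expect to carry out this matching by unfolding the explicit construction from the proof of Theorem~\ref{theorem:simultaneous-nodal} — in which~$\cD$ is realized inside the derived category of an appropriate blowup of~$X$ — and comparing it with the classical geometric resolution obtained by projection from the node of~$Y$, which presents~$\cA_Y$ via a sheaf of even Clifford algebras on~$\P^4$ whose degeneration divisor is precisely~$S$, so that~$\Db(S)$ is its canonical crepant categorical resolution.
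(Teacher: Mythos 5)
Your overall architecture matches the paper's: build an explicit one-parameter family of cubic fourfolds with an ordinary double point in the total space and in the central fiber, apply Theorem~\ref{theorem:simultaneous-nodal} to get a $B$-linear smooth and proper category~$\cD$, split off the relatively exceptional collection of twists of the structure sheaf to extract the relative K3 subcategory~$\cA$, and identify the fibers using~\eqref{eq:sod-dby} away from~$o$ and the categorical resolution of [Kuz10] at~$o$. The differences are mostly cosmetic. For the family, you take a general pencil $\bar X$ with smooth total space and then perform the double cover, whereas Lemma~\ref{lemma:cubic-family} builds the $t^2$ into the pencil directly so that the total space already has an ordinary double point at~$y_o$; both work, but the paper's version saves a step and keeps the base a subset of~$\AA^1$. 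For the admissibility of the relative exceptional triple, you invoke full faithfulness of~$\pi^*$ directly; the paper checks exceptionality fiberwise and cites Samokhin's theorem~\cite{Sam} to upgrade it to a $B$-exceptional collection. Both are legitimate, although the Samokhin route gives a cleaner statement that the orthogonal is an admissible $B$-linear subcategory.

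Two smaller points deserve flagging. First, you take~$\cA$ to be the \emph{right} orthogonal of~$\pi^*\cO_X,\pi^*\cO_X(1),\pi^*\cO_X(2)$, whereas the decomposition~\eqref{eq:sod-dby} (and the paper's proof) place~$\cA_Y$ on the \emph{left}; the two choices differ by a mutation and are abstractly equivalent, but to match the stated K3 category literally you should put~$\cA$ on the left as in~\eqref{eq:sod-dby}. Second, and more substantively, your description of the identification~$\widetilde\cA_Y \simeq \Db(S)$ at the central fiber is inaccurate: the projection of a nodal cubic fourfold from its node does \emph{not} produce a quadric fibration over~$\P^4$ with a sheaf of even Clifford algebras degenerating along~$S$ (you appear to be conflating this with the quadric surface bundle picture attached to the divisor~$\fC_8$). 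The correct picture, used in~\cite[Theorem~5.2]{K10:cubic} and invoked by the paper, is that linear projection from~$y_o$ identifies~$\Bl_{y_o}(Y)$ with~$\Bl_S(\P^4)$ where~$S \subset \P^4$ is the degree-$6$ K3 complete intersection of a quadric and a cubic, and comparing Orlov's blowup formula for~$\Bl_S(\P^4)$ with the Lefschetz decomposition~\eqref{eq:cd-y} yields~$\widetilde\cA_Y \simeq \Db(S)$. Since this is a black-box citation in the paper (through~\eqref{eq:tca-y}) your plan still goes through once you correct the picture, but as written this step of your proposal rests on a wrong geometric description.
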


A similar result can be proved for nodal Gushel--Mukai fourfolds; 
in this case the central fiber of the smooth and proper family of categories is equivalent to the $K3$ category of a Verra fourfold, see~\cite{CKKM}.

The result of Corollary~\ref{cor:cubic} shows that the family of $K3$ categories of cubic fourfolds 
a priori defined over the open part~$\fC \setminus (\fC_2 \cup \fC_6)$ 
of the corresponding period domain (here $\fC_d$ stand for the Noether--Lefschetz divisors defined by Hassett)
extends across the general point of the Noether--Lefschetz divisor~$\fC_6$.
Unfortunately, the same construction does not allow us to extend the family across the divisor~$\fC_2$ as well.

It would be very interesting to find generalizations of the constructions of this paper 
that would allow us to construct simultaneous categorical resolutions (or at least smooth extensions of families of categories) 
in more general situations (in particular, to extend the family of~$K3$ categories of cubic fourfolds across the divisor~$\fC_2$).
We hope to come back to this question in the future.

\medskip

{\bf Conventions.}
In this paper we work over an arbitrary field~$\kk$ (imposing restriction on~$\kk$ if necessary).
All schemes are noetherian of finite type over~$\kk$.
For a scheme~$X$ we denote by
\begin{equation*}
\Dperf(X) \subset \Db(X) \subset \Dqc(X)
\end{equation*}
the derived category of perfect complexes, the bounded derived category of coherent sheaves, 
and the unbounded derived category of sheaves of~$\cO_X$-modules with quasicoherent cohomology, respectively.
All pullback, pushforward, and tensor product functors considered in the paper are derived, although we use underived notation for brevity.
For a functor~$\xi$ we usually denote by~$\xi^*$ and~$\xi^!$ its left and right adjoint functors (if they exist), respectively.

\medskip

{\bf Acknowledgements.}
This paper grew out of my attempt to answer several questions asked by Shinnosuke Okawa; I am very grateful to him for asking.
I would also like to thank Sasha Efimov, Alex Perry, Nick Rozenblyum, Evgeny Shinder, Jenia Tevelev, and the anonymous referee
for useful discussions and comments.


\section{Linear categories}
\label{section:linear-cats}

Let~~$f \colon X \to B$ be a morphism of schemes.
A cocomplete subcategory $\cD \subset \Dqc(X)$ is~{\sf $B$-linear}, 
if it is closed with respect to tensor product by pullbacks of perfect complexes on~$B$:
\begin{equation*}
\cD \otimes f^*(\Dperf(B)) \subset \cD.
\end{equation*}
The same definition applies to subcategories of~$\Db(X)$ or~$\Dperf(X)$.

Similarly, a functor $\xi \colon \cD_1 \to \cD_2$ between $B$-linear subcategories in the derived categories 
of $B$-schemes~$f_1 \colon X_1 \to B$ and~$f_2 \colon X_2 \to B$ is {\sf $B$-linear} if it commutes with tensor products, i.e.,
there is a functorial (in both arguments) isomorphism
\begin{equation*}
\xi(F \otimes f_1^*G) \cong \xi(F) \otimes f_2^*G
\end{equation*}
for any $F \in \cD_1$ and $G \in \Dperf(B)$.

\subsection{Base change for admissible subcategories}
\label{subsection:base-change}

In this subsection we recall the construction of base change for semiorthogonal decompositions from~\cite{K11}.
For a subcategory $\cD \subset \Db(X)$ we set 
\begin{equation}
\label{eq:cd-pf}
\cD^\pf = \cD \cap \Dperf(X).
\end{equation}
Similarly, for a subcategory~$\cP \subset \Dperf(X)$ we denote by
\begin{equation*}
\widehat{\cP} \subset \Dqc(X)
\end{equation*}
the minimal cocomplete (i.e., closed under arbitrary direct sums) triangulated subcategory containing~$\cP$.

Now, assume that $X$ is a quasiprojective scheme and let
\begin{equation}
\label{eq:sod-dbx}
\Db(X) = \langle \cD_1, \dots, \cD_n \rangle
\end{equation}
be a semiorthogonal decomposition with admissible components
(\emph{strong} semiorthogonal decomposition in the terminology of~\cite{K11}).
We denote by
\begin{equation*}
\pr_{\cD_i} \colon \Db(X) \to \Db(X)
\end{equation*}
the projection functors to the components~$\cD_i$.
We usually assume that the functors~$\pr_{\cD_i}$ \emph{have finite cohomological amplitude} (see~\cite[\S2.3]{K08}).

\begin{proposition}[{\cite[Proposition~4.1 and Proposition~4.2]{K11}}]
\label{prop:extensions}
If~\eqref{eq:sod-dbx} is a semiorthogonal decomposition with admissible components then there are semiorthogonal decompositions
\begin{equation*}
\Dperf(X) = \langle (\cD_1)^\pf, \dots, (\cD_n)^\pf \rangle
\qquad\text{and}\qquad
\Dqc(X) = \langle \widehat{(\cD_1)^\pf}, \dots, \widehat{(\cD_n)^\pf} \rangle,
\end{equation*}
and if the projection functors of~\eqref{eq:sod-dbx} have finite cohomological amplitude
the second of these decompositions is \emph{compatible} with~\eqref{eq:sod-dbx} in the sense that
\begin{equation}
\label{eq:compatibility}
\cD_i = \widehat{(\cD_i)^\pf} \cap \Db(X).
\end{equation}
In particular, the components of any perfect complex~$F \in \Dperf(X)$ with respect to the above decomposition of~$\Dperf(X)$
coincide with the components of~$F$ considered as an object of~$\Db(X)$ or~$\Dqc(X)$ with respect to the corresponding decompositions.
\end{proposition}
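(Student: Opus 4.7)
The plan is to exploit two structural facts about quasiprojective~$X$. First, every admissible subcategory $\cD_i \subset \Db(X)$ is represented by a Fourier--Mukai kernel $K_i \in \Db(X \times X)$, so each projection functor $\pr_{\cD_i}$ is given by an explicit integral transform~$\Phi_{K_i}$. Second, $\Dqc(X)$ is compactly generated with compact objects equal to $\Dperf(X)$, so $\Hom$ out of a perfect complex commutes with arbitrary coproducts. Together these let me transfer the given $\Db(X)$-decomposition to both $\Dperf(X)$ and $\Dqc(X)$.

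For the decomposition of~$\Dperf(X)$ I would show that each kernel $K_i$ has finite Tor-amplitude over the first projection. This is plausible because~$\pr_{\cD_i}$ already lands in $\Db(X)$ and factors through the admissible inclusion of~$\cD_i$, which forces finiteness of the kernel relative to the first factor. Granting this, $\Phi_{K_i}$ sends $\Dperf(X)$ to itself, so for $F \in \Dperf(X)$ the components $\pr_{\cD_i}(F) \in \cD_i \cap \Dperf(X) = (\cD_i)^\pf$ provide the SOD filtration; semi-orthogonality is inherited from $\Db(X)$.

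For the decomposition of~$\Dqc(X)$ I would argue by continuity. Fix $F_j^0 \in (\cD_j)^\pf$; compactness of~$F_j^0$ makes $\Hom(F_j^0,-)$ commute with coproducts, so the class of $F_i$ satisfying $\Hom^\bullet(F_j^0, F_i) = 0$ is closed under coproducts and triangles; it contains $(\cD_i)^\pf$ by the $\Db$-SOD, hence contains $\widehat{(\cD_i)^\pf}$. Varying $F_j$ using that $\Hom(-, F_i)$ takes coproducts to products extends the vanishing to all $F_j \in \widehat{(\cD_j)^\pf}$, giving semi-orthogonality. Generation follows by cocompleting the $\Dperf(X)$-SOD: any $F \in \Dqc(X)$ is a homotopy colimit of perfect complexes, each of which decomposes into pieces in $(\cD_i)^\pf$, and taking colimits lands in $\widehat{(\cD_i)^\pf}$.

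Finally, for the compatibility~$\cD_i = \widehat{(\cD_i)^\pf} \cap \Db(X)$, the inclusion~$\supset$ is formal. Conversely, for $F \in \widehat{(\cD_i)^\pf} \cap \Db(X)$ the $\Dqc(X)$-projections onto $\widehat{(\cD_j)^\pf}$ vanish for $j \ne i$; the finite cohomological amplitude assumption ensures these $\Dqc$-projections restrict on $\Db(X)$ to the original $\pr_{\cD_j}$, so $\pr_{\cD_j}(F) = 0$ and hence $F \in \cD_i$. The main obstacle throughout is controlling the Fourier--Mukai kernels~$K_i$ so that the $\Db$, $\Dperf$, and $\Dqc$ versions of the projection functors agree where they should; for bounded complexes that are not perfect this compatibility is genuinely nontrivial, and finite cohomological amplitude is precisely the hypothesis that resolves it.
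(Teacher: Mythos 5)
The paper itself gives no proof of this proposition — it cites it directly from \cite[Propositions~4.1 and 4.2]{K11} — so what follows assesses your argument on its own terms. Your attempt has a real gap in the $\Dperf(X)$ step, and it is also logically circular within the framework of~\cite{K11}.

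\textbf{Circularity.} You begin by invoking Fourier--Mukai kernels $K_i$ for the projection functors. But the existence of those kernels (\cite[Theorem~7.1]{K11}) is itself \emph{derived} from the very base-change/extension machinery of \cite[\S 4--5]{K11} that Proposition~\ref{prop:extensions} is a part of. As the present paper notes, FM kernels for the projections are ``one of the applications of base change technique in [K11].'' Using them here is therefore arguing backwards, and this matters because a self-contained proof must avoid them.

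\textbf{The gap in the $\Dperf(X)$ step.} You assert that each $K_i$ has finite Tor-amplitude over the first projection, on the grounds that $\pr_{\cD_i}$ lands in $\Db(X)$. This inference does not hold: sending $\Db(X)$ to $\Db(X)$ is a boundedness statement, while finite Tor-amplitude of the kernel over a projection is a flatness/perfectness statement, and the two are not equivalent. In fact, note that the proposition asserts the induced semiorthogonal decompositions of $\Dperf(X)$ and $\Dqc(X)$ exist assuming \emph{only admissibility}; the finite cohomological amplitude hypothesis is required only for the compatibility~\eqref{eq:compatibility}. Your route, if it could be made rigorous, would prove the $\Dperf(X)$ decomposition only under the stronger hypothesis and hence would not match the statement. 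The intended argument is structural rather than kernel-theoretic: extend the inclusions of the cocomplete envelopes $\widehat{(\cD_i)^\pf}$ into $\Dqc(X)$, use Brown representability to produce the adjoints at the $\Dqc$ level (these inclusions preserve coproducts by construction, so their left adjoints preserve compacts), and conclude that the projections of a perfect complex are again perfect; then intersect with $\Dperf(X)$ and with $\Db(X)$. In other words, the decomposition of $\Dqc(X)$ is established first (or simultaneously), and the $\Dperf(X)$ decomposition falls out — the reverse of your order. Your $\Dqc$ semi-orthogonality argument by compactness is sound, but your generation argument for $\Dqc(X)$ leans on the $\Dperf(X)$ decomposition whose justification is the problematic step, so the gap propagates. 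The compatibility discussion at the end is at the level of intuition and would need the extension machinery to make the ``$\Dqc$-projections restrict to $\pr_{\cD_j}$'' claim precise, but it correctly locates where finite cohomological amplitude enters.
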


We will say that the above decompositions of~$\Dperf(X)$ and~$\Dqc(X)$ are {\sf induced} by~\eqref{eq:sod-dbx}.

Assume now that~$f \colon X \to B$ is a proper morphism of quasiprojective schemes
and the semiorthogonal decomposition~\eqref{eq:sod-dbx} is~{\sf $B$-linear},
i.e., each component~$\cD_i$ is $B$-linear;
then the projection functors~$\pr_{\cD_i}$ are~$B$-linear by~\cite[Lemma~2.8]{K11}.
Let~$B' \to B$ be a base change from a quasiprojective base scheme~$B'$ such that~$X$ and~$B'$ are~\emph{$\Tor$-independent} over~$B$
(this is automatic if~$f$ is flat).
We consider the fiber product
\begin{equation*}
X' = X \times_B B'
\end{equation*}
and denote by~$\phi \colon X' \to X$ and~$f' \colon X' \to B'$ the induced morphisms of the base change diagram
\begin{equation*}
\xymatrix{
X' \ar[d]_{f'} \ar[r]^\phi & 
X \ar[d]^f 
\\
B' \ar[r]^{\phi_B} &
B.
}
\end{equation*}
When necessary we denote the morphism~$B' \to B$ by~$\phi_B$.

\begin{theorem}[{\cite[Propositions~5.1, 5.3 and Theorem~5.6]{K11}}]
\label{thm:base-change}
Let~$f \colon X \to B$ be a proper morphism of quasiprojective schemes.
Let~\eqref{eq:sod-dbx} be a $B$-linear semiorthogonal decomposition with admissible components and projection functors of finite cohomological amplitude.
Let~$B' \to B$ be a quasiprojective morphism such that~$X$ and~$B'$ are~\emph{$\Tor$-independent} over~$B$.
Set
\begin{enumerate}
\renewcommand{\theenumi}{\roman{enumi}}
\item 
$\cP_{i B'} \coloneqq \left\langle \phi^*\left((\cD_i)^\pf\right) \otimes {f'}^*\Dperf(B') \right\rangle^\oplus \subset \Dperf(X')$, 
where the angle brackets stand for the triangulated envelope and the superscript~$\oplus$ stands for the idempotent completion;
\item 
$\cD_{i B'} \coloneqq \widehat{\cP_{i B'}} \cap \Db(X') \subset \Db(X')$.
\end{enumerate}
Then there are semiorthogonal decompositions
\begin{equation}
\label{eq:sod-db-xp}
\Dperf(X') = \langle \cP_{1 B'}, \dots, \cP_{n B'} \rangle,
\qquad\text{and}\qquad 
\Db(X') = \langle \cD_{1 B'}, \dots, \cD_{n B'} \rangle
\end{equation}
which are compatible, i.e., $(\cD_{i B'})^\pf = \cP_{i B'}$.

Moreover, the components~$\cD_{i B'}$ of the semiorthogonal decomposition~\eqref{eq:sod-db-xp} of~$\Db(X')$ 
are admissible and their projection functors~$\pr_{\cD_{i B'}}$ have finite cohomological amplitude.

Finally, 
the pushforward and pullback functors $\phi_* \colon \Dqc(X') \to \Dqc(X)$ and $\phi^* \colon \Dqc(X) \to \Dqc(X')$
are compatible with the semiorthogonal decompositions of~$\Dqc(X')$ and~$\Dqc(X)$ 
constructed in Proposition~\textup{\ref{prop:extensions}}:
\begin{equation*}
\phi_*\left(\widehat{\cP_{i B'}}\right) \subset \widehat{(\cD_{i})^\pf}
\qquad\text{and}\qquad 
\phi^*\left(\widehat{(\cD_{i})^\pf}\right)
\subset \widehat{\cP_{i B'}}.
\end{equation*}
\end{theorem}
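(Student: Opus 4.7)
The plan is to build the desired decompositions in three stages: first on $\Dqc(X')$, then on $\Dperf(X')$, and finally on $\Db(X')$.

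\emph{Step 1 (the cocomplete decomposition).} By Proposition~\ref{prop:extensions} the given $B$-linear decomposition extends to $\Dqc(X) = \langle \widehat{(\cD_1)^\pf}, \dots, \widehat{(\cD_n)^\pf} \rangle$ with continuous projection functors. I would then introduce the cocomplete subcategories $\widehat{\cP_{iB'}} \subset \Dqc(X')$ as the cocomplete triangulated envelopes of $\phi^*\widehat{(\cD_i)^\pf}$; by construction these also agree with the cocomplete envelopes of the subcategories $\cP_{iB'}$ defined in the statement. The key technical input is derived flat base change, available thanks to $\Tor$-independence of $X$ and $B'$ over $B$, which provides a functorial isomorphism $\phi_*(\phi^*F \otimes f'^*G) \cong F \otimes f^*\phi_{B*}G$ for $F \in \Dqc(X)$ and $G \in \Dqc(B')$. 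Combined with the adjunction $(\phi^*,\phi_*)$ and the $B$-linearity of the components $\widehat{(\cD_i)^\pf}$, this reduces semiorthogonality of the family $\{\widehat{\cP_{iB'}}\}$ to semiorthogonality of the original decomposition on $X$. Generation follows because the cocomplete envelope of $\phi^*\Dqc(X) \otimes f'^*\Dqc(B')$ is all of $\Dqc(X')$, yielding a semiorthogonal decomposition of $\Dqc(X')$.

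\emph{Step 2 (perfect and bounded versions).} The perfect decomposition is obtained by passing to compact objects: the compact objects of $\widehat{\cP_{iB'}}$ form, up to direct summands, exactly $\cP_{iB'}$, so the $\Dqc$-decomposition restricts to $\Dperf(X')$. For the bounded coherent version I would define $\cD_{iB'} \coloneqq \widehat{\cP_{iB'}} \cap \Db(X')$ as in~(ii) and appeal to the compatibility formula~\eqref{eq:compatibility}; to invoke it one must verify that the continuous projection functors of the $\Dqc(X')$-decomposition preserve $\Db(X')$, that is, have finite cohomological amplitude, and that every bounded coherent complex is recoverable from its semiorthogonal components.

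\emph{Main obstacle.} The hardest step is precisely the cohomological amplitude verification. The base-changed projections can be expressed in terms of compositions built from $\phi^*$, $\phi_*$, and the original projections $\pr_{\cD_i}$, so finite cohomological amplitude must be deduced from three inputs: the hypothesis that each $\pr_{\cD_i}$ has bounded amplitude, finiteness of the $\Tor$-dimension of $\phi$ (a consequence of $\Tor$-independence together with quasiprojectivity of $B' \to B$), and the $B$-linearity used to move tensor factors through the original projections. Once this is in place, admissibility of the components $\cD_{iB'}$ follows by standard arguments: the continuous projection functors on $\Dqc(X')$ restrict to $\Db(X')$ and supply both adjoints to the inclusion $\cD_{iB'} \hookrightarrow \Db(X')$. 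Finally, the inclusions $\phi^*\widehat{(\cD_i)^\pf} \subset \widehat{\cP_{iB'}}$ and $\phi_*\widehat{\cP_{iB'}} \subset \widehat{(\cD_i)^\pf}$ are essentially tautological: the first is built into the construction, and the second follows from the base-change formula above together with the $B$-linearity of the cocomplete components $\widehat{(\cD_i)^\pf}$.
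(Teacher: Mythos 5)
The statement carries no proof in the paper: it is quoted verbatim from~\cite[Propositions~5.1, 5.3, Theorem~5.6]{K11}, so there is no in-text argument to compare against. Your overall plan (cocomplete $\Dqc$-decomposition first, descent to $\Dperf$ by compacts, intersection with $\Db$ plus a cohomological-amplitude argument, all driven by flat base change and $B$-linearity) does match the strategy of~\cite{K11}. However, there is one genuine error and one genuine omission in the part you yourself flag as the ``main obstacle,'' so the proposal does not close the gap it identifies.

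The error is the claim that $\phi$ has finite Tor-dimension ``as a consequence of Tor-independence together with quasiprojectivity of $B' \to B$.'' This is false. Tor-independence of $X$ and $B'$ over $B$ places no bound on the Tor-dimension of $\phi_B \colon B' \to B$, hence none on $\phi$. For instance take $f$ flat (so Tor-independence is automatic for every $B' \to B$) with $B$ a singular affine curve and $B' = \{o\}$ the inclusion of a singular closed point; then $\phi_B$, and hence $\phi$, has infinite Tor-dimension, and $\phi^*$ does not even preserve $\Db$. Indeed the paper is careful about exactly this: Corollary~\ref{cor:bc} explicitly \emph{hypothesizes} ``$\phi^*(F) \in \Db(X')$'' rather than deriving it. The correct mechanism in~\cite{K11} for the finite cohomological amplitude of $\pr_{\cD_{iB'}}$ is not finite Tor-dimension of $\phi$ but boundedness of the \emph{Tor-amplitude of the Fourier--Mukai kernel} $\Delta_{\cD_i}$ relative to the first projection $X \times_B X \to X$; this is equivalent to the assumed finite amplitude of $\pr_{\cD_i}$, it is a property that survives base change to $\phi^*\Delta_{\cD_i}$ on $X' \times_{B'} X'$, and it bounds the amplitude of the base-changed projection regardless of the Tor-dimension of $\phi$.

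The omission is that, even granting correct inputs, you never actually carry out the deduction: the proposal states what the amplitude bound ``must be deduced from'' without producing the estimate, and this is precisely the step that occupies most of the technical work in~\cite{K11} (Propositions~5.1 and~5.3). As written, the sketch reproduces the outer architecture of the argument but leaves its load-bearing beam unbuilt, resting on a lemma that is not true in the stated generality.
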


We will use the following obvious consequence of this compatibility.

\begin{corollary}
\label{cor:bc}
Under the assumptions of Theorem~\textup{\ref{thm:base-change}}, assume~$X \to B$ is flat.
Let $F \in \Db(X)$ be an object such that~$\phi^*(F) \in \Db(X')$.
Then 
\begin{equation*}
\phi^*(\pr_{\cD_i}(F)) \cong \pr_{\cD_{i B'}}(\phi^*(F)),
\end{equation*}
where ~$\pr_{\cD_{i B'}}$ is the projection functor of~\eqref{eq:sod-db-xp};
in particular, $\phi^*(\pr_{\cD_i}(F)) \in \Db(X')$.
\end{corollary}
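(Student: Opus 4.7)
The plan is to transport the semiorthogonal filtration of~$F$ in~$\Db(X)$ along the pullback~$\phi^*$ and then identify the resulting filtration with the SOD filtration of~$\phi^*(F)$ attached to the base-changed decomposition of~$\Db(X')$. Since the latter a priori lives inside~$\Db(X')$ while~$\phi^*$ is most naturally defined on~$\Dqc$, I would work with the induced SODs of the quasicoherent derived categories and exploit their compatibility with the bounded decompositions.

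First, I would recall that the $B$-linear SOD~$\Db(X)=\langle \cD_1,\dots,\cD_n\rangle$ gives rise to a canonical filtration~$0 = F_{(0)} \to F_{(1)} \to \dots \to F_{(n)} = F$ in~$\Db(X)$ whose cones are the projections~$\pr_{\cD_i}(F)\in\cD_i$. Applying the triangulated functor~$\phi^*$ term by term produces a tower in~$\Dqc(X')$ with cones~$\phi^*(\pr_{\cD_i}(F))$. By Proposition~\ref{prop:extensions}, $\pr_{\cD_i}(F)\in\cD_i\subset\widehat{(\cD_i)^\pf}$, so the last assertion of Theorem~\ref{thm:base-change} yields~$\phi^*(\pr_{\cD_i}(F))\in \widehat{\cP_{i B'}}$ for every~$i$.

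Next, I would invoke Proposition~\ref{prop:extensions} on the base-changed side: Theorem~\ref{thm:base-change} guarantees that the components~$\cD_{i B'}$ are admissible and that their projection functors have finite cohomological amplitude, so one obtains an extended SOD~$\Dqc(X')=\langle \widehat{\cP_{1 B'}},\dots,\widehat{\cP_{n B'}}\rangle$ that is compatible with the~$\Db(X')$-decomposition via~$\cD_{i B'}=\widehat{\cP_{i B'}}\cap\Db(X')$. Uniqueness of the filtration associated to a semiorthogonal decomposition then forces the tower constructed in the previous paragraph to be the SOD filtration of~$\phi^*(F)$ in~$\Dqc(X')$, so~$\phi^*(\pr_{\cD_i}(F))$ is the projection of~$\phi^*(F)$ onto~$\widehat{\cP_{i B'}}$.

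Finally, the hypothesis~$\phi^*(F)\in\Db(X')$ lets me apply the compatibility~$\cD_{i B'}=\widehat{\cP_{i B'}}\cap\Db(X')$ to identify this~$\Dqc$-projection with~$\pr_{\cD_{i B'}}(\phi^*(F))\in\cD_{i B'}\subset\Db(X')$, which gives both conclusions simultaneously. The main subtlety lies in the uniqueness step: one must confirm that~$\langle \widehat{\cP_{1 B'}},\dots,\widehat{\cP_{n B'}}\rangle$ is genuinely a semiorthogonal decomposition of~$\Dqc(X')$ and that the projection functors it produces agree on bounded objects with those of the~$\Db(X')$-decomposition. This is precisely what the combination of Theorem~\ref{thm:base-change} and Proposition~\ref{prop:extensions} applied on the base-changed side is designed to supply, so the argument closes without further input.
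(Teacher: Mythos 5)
Your argument is correct and matches the route the paper intends: the paper states the corollary as an ``obvious consequence'' of the compatibility $\phi^*(\widehat{(\cD_i)^\pf})\subset\widehat{\cP_{iB'}}$ in Theorem~\ref{thm:base-change}, and you supply exactly the standard filling-in, namely transporting the SOD filtration of~$F$ through~$\phi^*$, using the last part of Theorem~\ref{thm:base-change} to place the cones in~$\widehat{\cP_{iB'}}$, invoking uniqueness of SOD filtrations in~$\Dqc(X')$, and then using the $\Db$--$\Dqc$ compatibility of Proposition~\ref{prop:extensions} on the base-changed side to identify the projection with~$\pr_{\cD_{iB'}}(\phi^*F)$.
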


One of the applications of base change technique in~\cite{K11} 
is the existence of Fourier--Mukai kernels for the projection functors~$\pr_{\cD_i}$,
see~\cite[Theorem~7.1]{K11} for the absolute case and~\cite[Theorem~7.3]{K11} for the relative case.
We will usually denote by
\begin{equation}
\label{eq:delta-d}
\Delta_{\cD_i} \in \Db(X \times_B X)
\end{equation}
the corresponding kernels, i.e., the objects such that there is an isomorphism of functors
\begin{equation*}
\pr_{\cD_i} \cong \Phi_{\Delta_{\cD_i}},
\end{equation*}
where the right side is the Fourier--Mukai functor with kernel~$\Delta_{\cD_i}$.
As~\cite[Theorem~7.3]{K11} proves, $\Delta_{\cD_i}$ is just the component of the structure sheaf~$\cO_{\Delta_X}$ 
of the diagonal~$\Delta_X \subset X \times_B X$ with respect to the semiorthogonal decomposition of~$\Db(X \times_B X)$ 
obtained by base change of~\eqref{eq:sod-dbx} along the projection~$X \to B$.
An application of Corollary~\ref{cor:bc} to these objects gives

\begin{corollary}
\label{cor:bc-diagonals}
Under the assumptions of Theorem~\textup{\ref{thm:base-change}}, assume~$X \to B$ is flat.
Then 
\begin{equation*}
\Delta_{\cD_{i B'}} \cong \phi^*\Delta_{\cD_i},
\end{equation*}
where $\phi \colon X'\times_{B'} X' = (X \times_B X) \times_B B' \to X \times_B X$ is the natural projection.
\end{corollary}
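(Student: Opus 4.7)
The plan is to apply Corollary~\ref{cor:bc} directly to the structure sheaf of the diagonal~$\cO_{\Delta_X}$, using the characterization of $\Delta_{\cD_i}$ as the component of~$\cO_{\Delta_X}$ with respect to the $B$-linear SOD of $\Db(X \times_B X)$ obtained by base change of~\eqref{eq:sod-dbx}, as recalled just above the statement. This reduces the assertion to identifying two SODs on~$\Db(X' \times_{B'} X')$: the one obtained by further base changing that SOD along $\phi_B \colon B' \to B$, and the one---used to define $\Delta_{\cD_{iB'}}$---obtained by first base changing~\eqref{eq:sod-dbx} to $\Db(X') = \langle \cD_{1B'}, \dots, \cD_{nB'}\rangle$ and then base changing along $X' \to B'$.

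First I would verify that $\phi^* \cO_{\Delta_X} \cong \cO_{\Delta_{X'}}$. Flatness of $X \to B$ makes both projections $X \times_B X \to X$ flat, so the diagonal embedding $\Delta_X \hookrightarrow X \times_B X$ is Tor-independent of the base change $X' \times_{B'} X' \to X \times_B X$; the scheme-theoretic fiber product $\Delta_X \times_{X \times_B X} (X' \times_{B'} X')$ is identified with~$X' \cong \Delta_{X'}$ by a direct check on points, since two points of $X' \times_{B'} X'$ that are equal in $X \times_B X$ and have equal image in $B'$ must coincide. In particular $\phi^*\cO_{\Delta_X} \in \Db(X' \times_{B'} X')$. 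Next, I apply Corollary~\ref{cor:bc} to the flat $B$-scheme $X \times_B X$ equipped with its base-changed SOD, and to the base change $\phi_B \colon B' \to B$; noting that $(X \times_B X) \times_B B' = X' \times_{B'} X'$, this gives
\[
\phi^* \Delta_{\cD_i} \;=\; \phi^* \pr_i(\cO_{\Delta_X}) \;\cong\; \pr'_i(\cO_{\Delta_{X'}}),
\]
where $\pr'_i$ denotes the projection to the $i$-th component of the resulting SOD on~$\Db(X' \times_{B'} X')$.

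The remaining step---which I expect to be the main obstacle---is to check that $\pr'_i$ coincides with $\pr_{\cD_{iB'}}$ for the SOD of $\Db(X' \times_{B'} X')$ defining $\Delta_{\cD_{iB'}}$ via~\cite[Theorem~7.3]{K11}. Both SODs arise from $\langle \cD_1, \dots, \cD_n \rangle$ by iterated base change through two sides of the commutative square with corners $X' \times_{B'} X'$, $X'$, $X \times_B X$, $X$, and I would verify transitivity of the explicit construction of Theorem~\ref{thm:base-change}: the components of each SOD on~$\Db(X' \times_{B'} X')$ are described as $\widehat{\cP} \cap \Db$ for~$\cP \subset \Dperf(X' \times_{B'} X')$ the idempotent-completed triangulated envelope of pullbacks of $(\cD_i)^\pf$ tensored with pullbacks of $\Dperf(B')$, and both routes produce the same family of generators because perfect pullbacks commute with tensor products and with each other up to coherent isomorphism. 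Once this transitivity is established, $\pr'_i(\cO_{\Delta_{X'}}) = \Delta_{\cD_{iB'}}$ by definition, which together with the previous step yields~$\phi^* \Delta_{\cD_i} \cong \Delta_{\cD_{iB'}}$.
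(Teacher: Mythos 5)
Your proposal is correct and takes essentially the same approach as the paper's (apply Corollary~\ref{cor:bc} to $\cO_{\Delta_X}$, which is the only proof the paper indicates). The transitivity-of-base-change step that you flag as the main obstacle — identifying the SOD on $\Db(X'\times_{B'}X')$ obtained via $X\times_B X$ with the one obtained via $X'$ — is indeed glossed over in the paper, and your sketch of closing it by matching the explicit generators from Theorem~\ref{thm:base-change} is sound, as is your flatness check that $\phi^*\cO_{\Delta_X}\cong\cO_{\Delta_{X'}}$.
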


\begin{remark}
In~\cite{P19} an alternative approach to base change is developed.
In this approach the basic object is a semiorthogonal decomposition of~$\Dperf(X)$, not of~$\Db(X)$,
and the results for~$\Db(X)$ are extracted from those for~$\Dperf(X)$ by means of the equivalence
\begin{equation*}
\Db(X) \simeq \Fun_{\Dperf(B)}(\Dperf(X)^{\mathrm{op}}, \Db(B)),
\end{equation*}
see~\cite[Theorem~4.26 and~\S4.8]{P19} for more details.
In particular, in the situation of base change the components of the induced semiorthogonal decomposition of~$\Db(X')$ are obtained as
\begin{equation*}
\cD_{i B'} = \Fun_{\Dperf(B')}(\cP_{i B'}^{\mathrm{op}}, \Db(B')).
\end{equation*}
However, we prefer to use the more straightforward approach described above.
\end{remark}

\subsection{Smoothness and properness}
\label{subsection:sp}

Let~$f \colon X \to B$ be a flat proper morphism and let~$\cD \subset \Db(X)$ be an admissible $B$-linear subcategory.
For any pair of objects $F_1,F_2 \in \cD$ we define
\begin{equation*}
\cRHom_B(F_1,F_2) \coloneqq f_*\cRHom(F_1,F_2) \in \Dqc(B).
\end{equation*}
The following definition rephrases in geometric terms the definitions of a category smooth and proper over base.
For intrinsic categorical definitions we refer to~\cite[Definition~4.5 and~Lemma~4.7]{P19}.

\begin{definition}
\label{def:sp}
Let~$f \colon X \to B$ be a flat proper morphism,
let~$\cD \subset \Db(X)$ be an admissible $B$-linear subcategory with an admissible orthogonal,
and set~$\cP \coloneqq \cD^\pf$.
We will say that
\begin{itemize}
\item 
the category~$\cP$ is {\sf proper over~$B$} if $\cRHom_B(F_1,F_2) \in \Dperf(B)$ for any~$F_1,F_2 \in \cP$;
\item 
the category~$\cP$ is {\sf smooth over~$B$} 
if the projection functor of~$\cD$ has finite cohomological amplitude 
and the corresponding Fourier--Mukai kernel~$\Delta_\cD \in \Dperf(X \times_B X)$ is perfect.
\end{itemize}
\end{definition}

Recall from~\S\ref{subsection:base-change} that the category~$\cD$ can be reconstructed from~$\cP = \cD^\pf$;
in particular, although in the definitions of smoothness we use~$\cD$, this is still a property of~$\cP$.
Let us also briefly explain why the above ad hoc definition is equivalent to the one from~\cite{P19}.
By~\cite[Lemma~4.7]{P19} for properness there is nothing to explain, 
and for smoothness note that the category of continuous~$B$-linear functors from~$\widehat\cP$ to itself 
can be identified with an appropriate subcategory of~$\Dqc(X \times_B X)$ in such a way that the identity functor corresponds to~$\Delta_\cD$;
therefore the condition~$\Delta_\cD \in \Dperf(X \times_B X)$ is equivalent to the condition 
that the identity functor is a compact object of the functor category.

We have the following obvious observations.

\begin{lemma}
\label{lemma:sp-geometric}
Let $f \colon X \to B$ be a flat proper morphism.
Let~$\cD \subset \Db(X)$ be an admissible $B$-linear subcategory with 
an admissible orthogonal and the corresponding projection functors of finite cohomological amplitude.
If~$X$ is smooth or proper over~$B$ then~$\cD^\pf$ is smooth or proper over~$B$, respectively.
\end{lemma}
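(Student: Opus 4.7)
The plan is to handle the two assertions separately, reducing each to the structural results of \S\ref{subsection:base-change} together with standard facts about perfect complexes.

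For the properness claim, take $F_1, F_2 \in \cD^\pf \subset \Dperf(X)$. Since $F_1$ is perfect, hence dualizable, we have $\cRHom(F_1, F_2) \cong F_1^\vee \otimes F_2 \in \Dperf(X)$. Because $f$ is proper and flat (hence of finite Tor-dimension), its derived pushforward preserves perfect complexes, so $\cRHom_B(F_1, F_2) = f_*\cRHom(F_1,F_2) \in \Dperf(B)$, verifying the properness condition in Definition~\ref{def:sp}.

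For the smoothness claim, I would apply Theorem~\ref{thm:base-change} to the base change morphism $f \colon X \to B$ itself; this is legitimate because flatness of $f$ gives Tor-independence of $X$ with itself over~$B$. The theorem yields a $B$-linear semiorthogonal decomposition of $\Db(X \times_B X)$ lifting the given one on $\Db(X)$, whose components are admissible with projection functors of finite cohomological amplitude. By the recipe recalled just after~\eqref{eq:delta-d}, the kernel $\Delta_\cD$ is then the projection of $\cO_{\Delta_X}$ onto the $\cD$-component of this base-changed decomposition. Now when $f$ is smooth, the relative diagonal $\Delta_X \subset X \times_B X$ is a regular closed immersion (the diagonal of any smooth morphism being regular), so $\cO_{\Delta_X}$ is perfect on $X \times_B X$. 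Finally, by Proposition~\ref{prop:extensions} — specifically by its last sentence, asserting that the components of a perfect object with respect to the $\Db$-decomposition coincide with those of the induced $\Dperf$-decomposition — the object $\Delta_\cD$ lies in $\Dperf(X \times_B X)$, which together with the assumed finite cohomological amplitude of the projection functor of $\cD$ is exactly the smoothness condition of Definition~\ref{def:sp}.

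The argument is essentially routine: the only ingredients are preservation of perfect complexes by proper flat pushforward, perfectness of the structure sheaf of the diagonal of a smooth morphism, and the compatibility of the induced $\Dperf$- and $\Db$-decompositions from Proposition~\ref{prop:extensions} combined with base change via Theorem~\ref{thm:base-change}. I do not expect a genuine obstacle; the slightly delicate point is simply confirming that the hypotheses for applying Theorem~\ref{thm:base-change} to the morphism $f \colon X \to B$ are satisfied, but this is immediate from flatness of $f$, admissibility of $\cD$ and its orthogonal, and the finite cohomological amplitude of the projection functors assumed in the lemma.
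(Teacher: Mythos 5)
Your proof is correct and takes essentially the same route as the paper: for properness, the observation that $\cRHom(F_1,F_2)$ is perfect and that proper flat pushforward preserves perfection; for smoothness, that $\cO_{\Delta_X}$ is perfect when $f$ is smooth, and then that Proposition~\ref{prop:extensions} applied to the base-changed decomposition of $\Db(X\times_B X)$ forces its component $\Delta_\cD$ to be perfect as well. The paper compresses the base-change step into a parenthetical, while you spell it out via Theorem~\ref{thm:base-change}, but the underlying argument is the same.
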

\begin{proof}
If~$X$ is smooth over~$B$ then the structure sheaf of the diagonal~$\Delta_X  \subset X \times_B X$ is perfect.
By Proposition~\ref{prop:extensions} (applied to~$X \times_B X$) its component~$\Delta_\cD$ is also perfect, hence~$\cD^\pf$ is smooth over~$B$.

Similarly, assume that~$X$ is proper over~$B$.
If~$F_1,F_2 \in \cD^\pf$ then~$\cRHom(F_1,F_2) \in \Dperf(X)$, and since~$f$ is flat and proper, 
the object~$\cRHom_B(F_1,F_2)$ is also perfect by~\cite[III.4.8]{SGA6}.
Therefore~$\cD^\pf$ is proper over~$B$.
\end{proof}

\begin{lemma}
\label{lemma:sp-obvious}
Smoothness and properness over a base are preserved under base change.
Moreover, smoothness and properness over a base are local \textup(over the base\textup) properties.
\end{lemma}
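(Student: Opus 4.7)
The plan is to deduce both claims from Theorem~\ref{thm:base-change} and Corollary~\ref{cor:bc-diagonals}, together with the standard fact that perfectness of a complex is a local property on the scheme. Throughout, fix a morphism $\phi_B \colon B' \to B$ of quasiprojective schemes and form the base-change square with $\phi \colon X' \to X$ and $f' \colon X' \to B'$. Since $f$ is flat, $X$ and $B'$ are automatically $\Tor$-independent over $B$, so Theorem~\ref{thm:base-change} applies and yields an admissible $B'$-linear subcategory $\cD_{B'} \subset \Db(X')$ with $(\cD_{B'})^\pf = \cP_{B'}$ whose projection functor already has finite cohomological amplitude.

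For smoothness under base change, Corollary~\ref{cor:bc-diagonals} identifies the Fourier--Mukai kernel of $\pr_{\cD_{B'}}$ as $\Delta_{\cD_{B'}} \cong \phi^*\Delta_\cD$. Since $\phi^*$ preserves perfect complexes, the hypothesis $\Delta_\cD \in \Dperf(X \times_B X)$ yields $\Delta_{\cD_{B'}} \in \Dperf(X' \times_{B'} X')$, and so $\cP_{B'}$ is smooth over $B'$.

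For properness under base change, Theorem~\ref{thm:base-change}(i) presents $\cP_{B'}$ as the idempotent completion of the triangulated envelope of objects of the form $\phi^*F \otimes f'^*G$ with $F \in \cP$ and $G \in \Dperf(B')$. The class of pairs $(F'_1,F'_2)$ for which $\cRHom_{B'}(F'_1,F'_2) \in \Dperf(B')$ is closed under shifts, cones and direct summands in each slot, so it suffices to test on such generators. Combining the identity $\phi^*\cRHom(F_1,F_2) \cong \cRHom(\phi^*F_1,\phi^*F_2)$ (valid since $F_i$ is perfect) with the projection formula and flat base change along the $\Tor$-independent square, one obtains
\begin{equation*}
\cRHom_{B'}\bigl(\phi^*F_1 \otimes f'^*G_1,\, \phi^*F_2 \otimes f'^*G_2\bigr)
\cong \phi_B^* \cRHom_B(F_1,F_2) \otimes \cRHom(G_1,G_2),
\end{equation*}
which is perfect on $B'$ because $\cRHom_B(F_1,F_2) \in \Dperf(B)$ by assumption, $\phi_B^*$ preserves perfection, and tensor products of perfect complexes are perfect.

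Locality over the base follows formally. Given an open cover $B = \bigcup B_\alpha$ (which may be taken finite by Noetherianity), the base-change statements above give one direction: smoothness or properness over $B$ restricts to the corresponding property over each $B_\alpha$. Conversely, if each $\cP_{B_\alpha}$ has the property, then $\Delta_\cD$ (respectively each $\cRHom_B(F_1,F_2)$) is perfect on each open of the induced cover of $X \times_B X$ (respectively of $B$); since perfectness of a complex is a local property, it holds on the whole. The only step that needs any care is the reduction to generators in the properness argument, which is handled by the closure properties of ``$\cRHom_{B'}(-,-)$ is perfect'' noted above; no serious obstacle arises.
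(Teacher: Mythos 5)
Your proof is correct and follows the same approach the paper takes: Corollary~\ref{cor:bc-diagonals} gives base-change of the projection kernel for smoothness, the base-change isomorphism for $\cRHom_B$ gives properness, and the local nature of perfectness gives locality over the base. You have filled in more detail than the paper (in particular, reducing to the generating objects $\phi^*F \otimes f'^*G$ from Theorem~\ref{thm:base-change}(i) and verifying the tensor-decomposition of $\cRHom_{B'}$), but this is the same argument the paper's one-line proof is implicitly invoking.
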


\begin{proof}
Properness of the base change category follows from the isomorphism 
\begin{equation*}
\phi_B^*\cRHom_B(F_1,F_2) \cong \cRHom_{B'}(\phi^*F_1,\phi^*F_2),
\qquad 
F_1,F_2 \in \Dperf(X).
\end{equation*}
Smoothness of the base change category and the second part of the lemma follow from Corollary~\ref{cor:bc-diagonals} and the local nature of perfectness.
\end{proof}

The following result will be used in the proofs of Theorem~\ref{theorem:sp} and Theorem~\ref{theorem:simultaneous}.

\begin{proposition}
\label{prop:perf-coh}
Let $X$ be a projective scheme over a field~$\kk$.
Let $\cD \subset \Db(X)$ be an admissible subcategory such that~${}^\perp\cD$ is admissible.
If the category~$\cD^\pf$ is smooth and proper over~$\kk$ then~$\cD = \cD^\pf$.
\end{proposition}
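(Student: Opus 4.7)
The plan is to leverage the Fourier--Mukai description $\pr_\cD \cong \Phi_{\Delta_\cD}$ of the projection functor. Smoothness of $\cD^\pf$ over $\kk$ (Definition~\ref{def:sp}) supplies a perfect kernel $\Delta_\cD \in \Dperf(X \times X)$. Since every $F \in \cD$ satisfies $F \cong \pr_\cD(F) \cong \Phi_{\Delta_\cD}(F)$, it suffices to show that $\Phi_{\Delta_\cD}(F) \in \Dperf(X)$ for every $F \in \Db(X)$: this yields $\cD \subset \cD \cap \Dperf(X) = \cD^\pf$, and the reverse inclusion is tautological.

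To prove this, I would first invoke the standard fact that, since $X$ is projective, $\Dperf(X \times X)$ is classically generated (as a thick subcategory) by external tensor products $A \boxtimes B \coloneqq p_1^*A \otimes p_2^*B$ with $A, B \in \Dperf(X)$; for instance, the line bundles $\cO(-n) \boxtimes \cO(-n)$ alone classically generate $\Dperf(X \times X)$ by an Orlov-type argument using the very ample polarization $\cO(1,1)$. Next, for such an exterior product kernel, the projection formula together with flat base change along the Cartesian square
\[
\xymatrix{
X \times X \ar[r]^{p_1} \ar[d]_{p_2} & X \ar[d] \\
X \ar[r] & \Spec\kk
}
\]
gives, for any $F \in \Db(X)$,
\[
\Phi_{A \boxtimes B}(F) \cong \RGamma(X, F \otimes A) \otimes_\kk B,
\]
and this lies in $\Dperf(X)$: indeed, $A$ is perfect and $X$ is proper over $\kk$, so $\RGamma(X, F \otimes A) \in \Db(\kk) = \Dperf(\kk)$, and tensoring over $\kk$ with the perfect complex $B$ yields a perfect complex on $X$. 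Since $K \mapsto \Phi_K(F)$ is triangulated in $K$ and $\Dperf(X) \subset \Db(X)$ is thick, $\Delta_\cD$ being in the thick closure of the box products forces $\Phi_{\Delta_\cD}(F) \in \Dperf(X)$, as required.

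I do not foresee a real obstacle. Properness of $\cD^\pf$ is essentially automatic once $X$ is projective (since $\Db(\kk) = \Dperf(\kk)$); all the work is done by smoothness, which provides the perfect kernel $\Delta_\cD$ and funnels it through the box-product generation of $\Dperf(X \times X)$. The only classical input that might deserve explicit citation is this generation statement, but it is standard for projective schemes.
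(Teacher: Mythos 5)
Your proof is correct, but it takes a genuinely different route from the paper's. The paper argues via the abstract representability machinery: smoothness gives a strong generator (regularity, by Lunts), which together with properness and idempotent completeness makes $\cD^\pf$ saturated (Bondal--Van den Bergh); then Bondal--Kapranov representability splits $\cD = \langle \cD', \cD^\pf\rangle$, and nesting this inside $\Db(X) = \langle \cD, {}^\perp\cD\rangle$ forces $\cD' \subset (\Dperf(X))^\perp = 0$. You instead argue concretely with the Fourier--Mukai kernel: smoothness means $\Delta_\cD \in \Dperf(X \times X)$, and since $\Dperf(X \times X)$ is classically generated (up to retracts) by box products $A \boxtimes B$ with $A,B \in \Dperf(X)$, it suffices to observe that $\Phi_{A\boxtimes B}(F) \cong \RGamma(X, F\otimes A) \otimes_\kk B$ is perfect for all $F\in\Db(X)$ because $X$ is proper over $\kk$ and $\Db(\kk) = \Dperf(\kk)$. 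Your version is arguably more elementary and self-contained, bypassing the saturated-category apparatus; its one nontrivial input, the box-product generation of $\Dperf(X \times X)$, is precisely the fact the paper itself invokes as \cite[Lemma~5.2]{K11} in the proof of Theorem~\ref{theorem:sp}, so it is squarely within the paper's toolkit and a better citation than the informal Orlov-type sketch. Your side remark that properness of $\cD^\pf$ is automatic for projective $X$ is correct; the paper keeps it as an explicit hypothesis since it is used in the Bondal--Van den Bergh saturation step, but your approach shows it is dispensable.
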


\begin{proof}
Since~$\cD^\pf$ is smooth over~$\kk$, it has a strong generator by~\cite[Lemma~3.5, 3.6]{Lunts10}; in other words, it is \emph{regular}.
It is also idempotent complete, because~$\cD$ and~$\Dperf(X)$ are, and proper over~$\kk$ by assumption.
Therefore, $\cD^\pf$ is \emph{saturated} by~\cite[Theorem~1.3]{BV}; 
in other words every contravariant triangulated functor $\cD^\pf \to \Db(\kk)$ is representable.
Applying~\cite[Proposition~2.6]{BK}, we conclude that
there is a semiorthogonal decomposition
\begin{equation*}
\cD = \langle \cD', \cD^\pf \rangle.
\end{equation*}
On the other hand, the semiorthogonal decomposition~$\Db(X) = \langle \cD, {}^\perp\cD \rangle$ gives decompositions
\begin{equation*}
\Db(X) = \langle \cD', \cD^\pf, {}^\perp\cD \rangle
\qquad\text{and}\qquad
\Dperf(X) = \langle \cD^\pf, ({}^\perp\cD)^\pf \rangle
\end{equation*}
(to construct the second we use Proposition~\ref{prop:extensions}),
which imply that $\cD' \subset (\Dperf(X))^\perp = 0$, hence~$\cD' = 0$ and so~$\cD = \cD^\pf$.
\end{proof}

The main result of this section is the following. 

\begin{theorem}
\label{theorem:sp}
Let~$f \colon X \to B$ be a flat proper morphism of quasiprojective schemes 
and let~\mbox{$\cD \subset \Db(X)$} be an admissible $B$-linear subcategory
such that~${}^\perp\cD$ is admissible and the projection functors 
of the semiorthogonal decomposition~$\Db(X) = \langle \cD, {}^\perp\cD \rangle$ have finite cohomological amplitude.
Denote by~\mbox{$B_{\mathrm{sm}} \subset B$} and $B_{\mathrm{sm,pr}} \subset B$ the sets of points $b \in B$ 
such that the category~$(\cD_b)^\pf$ is smooth 
and such that~$(\cD_b)^\pf$ is smooth and proper over the residue field of~$b$, respectively.

Then the subsets~$B_{\mathrm{sm}}$ and~$B_{\mathrm{sm,pr}}$ are open in~$B$,
the category~$(\cD_{B_{\mathrm{sm}}})^\pf$ is smooth over~$B_{\mathrm{sm}}$ and
the category~$(\cD_{B_{\mathrm{sm,pr}}})^\pf$ is smooth and proper over~$B_{\mathrm{sm,pr}}$.

Moreover, if the scheme~$B_{\mathrm{sm,pr}}$ is regular, there is an equality of categories
\begin{equation}
\label{eq:cd-b-cd-pf}
(\cD_{B_{\mathrm{sm,pr}}})^\pf = \cD_{B_{\mathrm{sm,pr}}}.
\end{equation} 
\end{theorem}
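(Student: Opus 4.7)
The plan is to reduce all four assertions to a single fiberwise criterion for perfectness applied to the Fourier--Mukai kernel $\Delta_\cD \in \Db(X \times_B X)$ produced by Theorem~\ref{thm:base-change}. My first observation is that properness comes for free: since $X \to B$ is flat and proper, each fiber $X_b$ is proper over $\kappa(b)$, so Lemma~\ref{lemma:sp-geometric} applied fiberwise, using that base change preserves finite cohomological amplitude of the projection functors (Theorem~\ref{thm:base-change}), shows that $(\cD_b)^\pf$ is automatically proper over $\kappa(b)$. Consequently $B_{\mathrm{sm,pr}} = B_{\mathrm{sm}}$ as subsets of $B$, properness of $(\cD_{B_{\mathrm{sm,pr}}})^\pf$ over $B_{\mathrm{sm,pr}}$ is likewise automatic, and the only nontrivial parts of claims (1)--(4) concern smoothness.

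To handle smoothness, I would apply Corollary~\ref{cor:bc-diagonals} to identify $\Delta_{\cD_b}$ with the derived pullback of $\Delta_\cD$ to $X_b \times_{\kappa(b)} X_b$; by Definition~\ref{def:sp}, $(\cD_b)^\pf$ is smooth iff this pullback is perfect. The fiberwise criterion for perfectness, available because $X \times_B X \to B$ is flat, then asserts that the closed locus $W \subset X \times_B X$ where $\Delta_\cD$ fails to be perfect meets a fiber $X_b \times_{\kappa(b)} X_b$ precisely when $(\cD_b)^\pf$ is not smooth. Since $X \times_B X \to B$ is proper, the image of $W$ is closed in $B$, and its complement is $B_{\mathrm{sm}}$. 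Over $B_{\mathrm{sm}}$, $\Delta_\cD$ is perfect; a further application of Corollary~\ref{cor:bc-diagonals} identifies this perfect restriction with $\Delta_{\cD_{B_{\mathrm{sm}}}}$, proving smoothness of $(\cD_{B_{\mathrm{sm}}})^\pf$ over $B_{\mathrm{sm}}$.

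For the last assertion, assume $B_{\mathrm{sm,pr}}$ is regular and take $F \in \cD_{B_{\mathrm{sm,pr}}}$. Regularity guarantees that $\kappa(b)$ has finite Tor-dimension over $\cO_{B_{\mathrm{sm,pr}},b}$, so the derived restriction $F|_{X_b}$ remains bounded, and Corollary~\ref{cor:bc} places it in $\cD_b$. Since $(\cD_b)^\pf$ is smooth and proper, Proposition~\ref{prop:perf-coh} gives $\cD_b = (\cD_b)^\pf$, so $F|_{X_b} \in \Dperf(X_b)$. A second invocation of the fiberwise criterion upgrades this pointwise perfection to $F \in \Dperf(X_{B_{\mathrm{sm,pr}}})$, whence $F \in (\cD_{B_{\mathrm{sm,pr}}})^\pf$; the reverse inclusion is tautological.

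The main technical ingredient, used in both of its forms above, is the fiberwise criterion: on a flat morphism of noetherian schemes, a bounded coherent complex is perfect at a point iff its derived restriction to the fiber through that point is perfect there, so the perfection locus on the source is open. This is classical, but one has to invoke it with care: in particular, the boundedness of the derived fiber restrictions that enters the last step is exactly what forces the regularity hypothesis on $B_{\mathrm{sm,pr}}$, and this will be the subtlest point to verify.
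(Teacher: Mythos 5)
Your proposal is essentially correct and follows the same outline as the paper: identify the Fourier--Mukai kernel $\Delta_\cD$ of the projection, use Corollary~\ref{cor:bc-diagonals} to identify its derived fiber with $\Delta_{\cD_b}$, invoke a perfectness-detection statement to turn fiberwise perfectness of $\Delta_\cD$ into an open condition on $B$, and then for the regularity statement restrict a given $F \in \cD$ to each fiber, use Proposition~\ref{prop:perf-coh} to conclude fiberwise perfectness, and propagate it back.

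There is, however, one place where you take a genuinely different route. You observe at the outset that since $X \to B$ is flat and proper, Lemma~\ref{lemma:sp-geometric} applies to each fiber $X_b \to \Spec \kappa(b)$ (admissibility and finite cohomological amplitude being preserved by base change per Theorem~\ref{thm:base-change}), so $(\cD_b)^\pf$ is proper over $\kappa(b)$ for \emph{every} $b$, and therefore $B_{\mathrm{sm,pr}} = B_{\mathrm{sm}}$. This is a correct and rather nice simplification. The paper instead gives a separate argument for openness of $B_{\mathrm{sm,pr}}$ and for relative properness, extracting from the perfect kernel $\Delta_\cD$ a finite set of relative generators $G_i''$ and then testing perfectness of the finitely many $\cRHom_B(G_i'',G_j'')$ via Lemma~\ref{lemma:general}. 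That argument is not wrong, and the finite-generator step would become essential in settings where $X$ is not proper over $B$ or where $\cD$ is not an admissible piece of a geometric $\Db(X)$; but in the precise hypotheses of the theorem your shortcut suffices and is cleaner. The other difference is presentational: you cite the ``fiberwise criterion for perfectness'' as a classical black box, whereas the paper builds the exact form it needs from scratch (Lemma~\ref{lemma:general} via minimal free resolutions, then Corollary~\ref{cor:general} for proper morphisms). You flag, correctly, that this is the subtle point; to make your argument fully self-contained you would need to spell out the minimal-free-resolution criterion (perfectness of $F$ at a point is detected by boundedness of the derived residue-field fiber), since that is precisely what justifies both the openness of the perfection locus and the passage from fiberwise perfectness of $F$ to perfectness of $F$ in the regular-base step. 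With that supplied, your proof is complete.
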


It is an interesting question 
if a similar result holds for $B$-linear categories 
which cannot be represented as admissible subcategories in derived categories of $B$-linear varieties ---
we are not aware of any results in this direction.

The proof of Theorem~\ref{theorem:sp} is based on the following results about perfect complexes on schemes
which should be well-known but we did not find a reference.

\begin{lemma}
\label{lemma:general}
Let $i \colon Z \to Y$ be a closed embedding of noetherian schemes.
If $F \in \Db(Y)$ is such that~$i^*F \in \Dperf(Z)$ then there is an open subset~$U \subset Y$ containing~$Z$ such that~$F\vert_U$ is perfect.
\end{lemma}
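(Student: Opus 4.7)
Since perfectness is a local property on $Y$, it suffices to show that every point $z\in Z$ has an open neighborhood on which $F$ is perfect. The union of such neighborhoods is then the desired $U$. Moreover, the locus
\[
W \;\coloneqq\; \{\, y \in Y : F_y \in \Dperf(\cO_{Y,y})\,\} \;\subset\; Y
\]
is open: given $y \in W$, a finite free resolution of $F_y$ extends to a morphism $P \to F$ from a bounded complex of free $\cO_Y$-modules of finite rank on some affine neighborhood of $y$; its cone has coherent cohomology with vanishing stalks at $y$, and by Nakayama/coherence vanishes on a possibly smaller neighborhood. Hence the problem reduces to proving the inclusion $Z \subset W$.

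Fix $z \in Z$, set $R \coloneqq \cO_{Y,z}$ with residue field $k \coloneqq k(z)$, and let $J \subset R$ be the ideal defining $Z$ locally, so that $\cO_{Z,z} = R/J$. The stalk $F_z$ lies in $\Db(R)$ and has finitely generated cohomology. The hypothesis $i^*F \in \Dperf(Z)$ gives
\[
F_z \otimes^{\mathbf L}_R (R/J) \;=\; (i^*F)_z \;\in\; \Dperf(R/J).
\]
The key ingredient is the standard local criterion for perfectness over a noetherian local ring: a complex $M \in \Db(R)$ with finitely generated cohomology is perfect if and only if $M \otimes^{\mathbf L}_R k$ has bounded cohomology (equivalently, $\Tor_i^R(M,k) = 0$ for $i \gg 0$); this is proved via the minimal free resolution and Nakayama's lemma.

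To apply this criterion to $M = F_z$, factor the tensor product through $R/J$: since $k$ is naturally an $R/J$-module, we have
\[
F_z \otimes^{\mathbf L}_R k \;\cong\; \bigl(F_z \otimes^{\mathbf L}_R R/J\bigr) \otimes^{\mathbf L}_{R/J} k.
\]
The factor $F_z \otimes^{\mathbf L}_R R/J$ is perfect over $R/J$ by the hypothesis, so its derived tensor product with $k$ is a bounded complex of $k$-vector spaces. Hence $F_z \otimes^{\mathbf L}_R k$ is bounded, and by the local criterion $F_z$ is perfect over $R$; that is, $z \in W$. The only non-formal ingredient is the local criterion, which is standard, so I do not expect a real obstacle — the substance of the lemma is simply that perfectness over $R/J$ restricts the $\Tor$-amplitude sufficiently to deduce perfectness over $R$ via the residue field.
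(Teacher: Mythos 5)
Your proof is correct and takes essentially the same route as the paper's: the key local step in both is the minimal-free-resolution/Nakayama criterion for perfectness over the noetherian local ring at a point of $Z$, combined with the observation that $F_z \otimes^{\mathbf L} k(z)$ factors through the perfect complex $(i^*F)_z$ and is therefore bounded, and the global statement then follows by the standard spreading-out/cone argument showing the perfect locus is open. Your packaging — invoking the local criterion abstractly, making the factorization $F_z \otimes^{\mathbf L}_R k \cong (F_z \otimes^{\mathbf L}_R R/J) \otimes^{\mathbf L}_{R/J} k$ explicit, and working at arbitrary points of $Z$ rather than closed points of an affine chart — is a clean reorganization of the same underlying argument.
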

\begin{proof}
First, assume $Y = \Spec(A)$ is affine.
Let $z \in Z$ be a closed point, 
let $\fm \subset A$ be the corresponding maximal ideal,
and let~$A_\fm$ be the localization of~$A$ at~$\fm$.

First, we check that $F_\fm \coloneqq F \otimes_A A_\fm$ is perfect.
Indeed, let~$P^\bullet$ be the minimal free bounded above resolution of~$F_\fm$.
Its terms have finite rank because~$F \in \Db(Y)$ and~$Y$ is noetherian.
Recall that the minimality of~$P^\bullet$ means that the complex~$P^\bullet \otimes_{A_\fm} (A_\fm/\fm A_\fm)$ has zero differentials.
Since on the other hand the complex~$P^\bullet \otimes_{A_\fm} (A_\fm/\fm A_\fm)$ is quasiisomorphic 
to the localization of the perfect complex~$i^*F$, 
it is perfect, and since this is a complex with zero differential, it is bounded.
Therefore, $P^\bullet$ is also bounded, hence it is a perfect complex, hence the same is true for~$F_\fm$.

Let again $P^\bullet$ be a bounded complex of free~$A_\fm$-modules of finite rank quasiisomorphic to~$F_\fm$.
Note that the quasiisomorphism can be represented by an actual morphism of complexes~$P^\bullet \to F_\fm$.
The components of this morphism and the differentials of~$P^\bullet$ comprise a finite number of matrices with elements 
in the localization~$A_\fm$ of the ring and localizations~$(F^i)_\fm$ of terms of the complex~$F$; 
they contain a finite number of denominators which are not contained in~$\fm$.
Therefore, there is an open subscheme~$U_z \subset Y$ containing~$z$, 
a bounded complex $\tilde P^\bullet$ of free~$\cO_{U_z}$-modules of finite rank,
and a morphism $\tilde P^\bullet \to F\vert_{U_z}$ 
(the differentials of~$\tilde P^\bullet$  and the components of the morphism are given by the same matrices as before)
which becomes an isomorphism after tensor product with~$A_\fm$.
The cone 
\begin{equation*}
G \coloneqq \Cone(\tilde P^\bullet \to F\vert_{U_z})
\end{equation*}
is, therefore, an object of~$\Db(U_z)$ which becomes zero after tensoring with~$A_\fm$.
Thus, it is zero on a smaller neighborhood~$U'_z \subset U_z$ of~$z$, 
and hence $F\vert_{U'_z} \cong \tilde P^\bullet\vert_{U'_z}$ is perfect.

Now let $Y$ be arbitrary.
The above argument shows that each point~$z \in Z$ has a neighborhood~$U'_z \subset Y$ containing~$z$ such that~$F\vert_{U'_z}$ is perfect.
Therefore, $F$ is perfect on the open subset~$U \coloneqq \bigcup_{z \in Z} U'_z$ of~$Y$ containing~$Z$.
\end{proof}

\begin{corollary}
\label{cor:general}
Let $Y \to B$ be a proper morphism.
For a closed point $b \in B$ denote by~$i \colon Y_b \hookrightarrow Y$ the embedding of the fiber over~$b$.
If $F \in \Db(Y)$ is an object such that~$i^*F \in \Dperf(Y_b)$
then there is an open subset $U \subset B$ containing~$b$ such that~$F\vert_{Y_U} \in \Dperf(Y_U)$.
\end{corollary}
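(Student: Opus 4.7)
The plan is to reduce the corollary to Lemma~\ref{lemma:general} by combining it with the properness of $Y \to B$, which converts ``openness over the total space'' into ``openness over the base''.

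First I would apply Lemma~\ref{lemma:general} directly to the closed embedding $i \colon Y_b \hookrightarrow Y$ (it is indeed a closed embedding, since $\{b\} \subset B$ is a closed point and $Y_b$ is its scheme-theoretic preimage). The assumption $i^*F \in \Dperf(Y_b)$ is exactly the hypothesis of the lemma, so we obtain an open subset $V \subset Y$ containing $Y_b$ such that $F|_V$ is perfect.

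Next I would use properness of $f \colon Y \to B$ to convert $V$ into an open in $B$. The complement $Y \setminus V$ is closed in $Y$, so by properness its image $f(Y \setminus V) \subset B$ is closed. Since $V \supset Y_b = f^{-1}(b)$, no point of $Y \setminus V$ maps to $b$, i.e., $b \notin f(Y \setminus V)$. Therefore
\begin{equation*}
U \coloneqq B \setminus f(Y \setminus V)
\end{equation*}
is an open subset of $B$ containing $b$, and by construction $Y_U = f^{-1}(U) \subset V$. Restricting the perfect complex $F|_V$ to the open subset $Y_U \subset V$ gives $F|_{Y_U} \in \Dperf(Y_U)$, as desired.

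There is no real obstacle here; the only thing to be careful about is that one needs a closed point $b$ so that Lemma~\ref{lemma:general} applies (the lemma was proved for the closed embedding of a subscheme, and the argument in its proof uses a closed point of $Z$). The role of properness is entirely to turn the set-theoretic statement ``$V$ contains the fiber $Y_b$'' into the stronger statement ``$V$ contains $Y_U$ for some open $U \ni b$'', via the closed map property.
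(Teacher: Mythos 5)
Your proof is correct and takes the same approach as the paper: apply Lemma~\ref{lemma:general} to obtain an open $V \subset Y$ containing $Y_b$ on which $F$ is perfect, then use properness to push the complement $Y \setminus V$ to a closed subset of $B$ missing $b$, and let $U$ be its open complement. Your closing remark about why $b$ must be a closed point (so that $Y_b \hookrightarrow Y$ is a closed embedding and the lemma applies) is a correct and worthwhile observation.
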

\begin{proof}
By Lemma~\ref{lemma:general} there is an open subset~$V \subset Y$ containing~$Y_b$ such that~$F\vert_V$ is perfect.
Since the morphism~$Y \to B$ is proper, the image of~$Y \setminus V$ in~$B$ is closed.
Since it does not contain the point~$b$, we can just take~$U$ to be its complement in~$B$.
\end{proof}

Now we are ready to prove the theorem.

\begin{proof}[Proof of Theorem~\textup{\ref{theorem:sp}}]
Let~$\Delta_\cD \in \Db(X \times_B X)$ be the Fourier--Mukai kernel of the projection functor~$\pr_\cD$ defined by~\eqref{eq:delta-d}.
Similarly, for each~$b \in B$ let~$\Delta_{\cD_b} \in \Db(X_b \times X_b)$ be the analogous object 
for the category~$\cD_b \subset \Db(X_b)$.
By Corollary~\ref{cor:bc-diagonals} we have
\begin{equation*}
\Delta_{\cD_b} \cong i^*\Delta_\cD,
\end{equation*}
where $i \colon X_b \times X_b \hookrightarrow X \times_B X$ is the embedding of the fiber of~$X \times_B X \to B$ over~$b$.
If~$b \in B_{\mathrm{sm}}$ then~$\Delta_{\cD_b}$ is perfect by Definition~\ref{def:sp}.
Therefore, by Corollary~\ref{cor:general} there is an open subset~$U \subset B$ containing~$b$
such that~$\Delta_\cD\vert_{X_U \times_U X_U}$ is also perfect.
But 
\begin{equation*}
\Delta_\cD\vert_{X_U \times_U X_U} \cong \Delta_{\cD_U},
\end{equation*}
again by Corollary~\ref{cor:bc-diagonals},
hence~$(\cD_U)^\pf$ is smooth over~$U$, and hence $U \subset B_{\mathrm{sm}}$.
Thus, $B_{\mathrm{sm}}$ is open.
Moreover, by the second part of Lemma~\ref{lemma:sp-obvious} the category~$(\cD_{B_{\mathrm{sm}}})^\pf$ is smooth over~$B_{\mathrm{sm}}$.

For the second part of the theorem we may assume that $B = B_{\mathrm{sm}}$, so that~$\cD^\pf$ is smooth over~$B$.
By Definition~\ref{def:sp} the object~$\Delta_\cD \in \Db(X \times_B X)$ is perfect, 
and since the category of perfect complexes of a fiber product is generated 
by tensor products of perfect complexes pulled back from the factors (\cite[Lemma~5.2]{K11}),
there is a finite number of perfect complexes~$G'_i,G''_i \in \Dperf(X)$ such that
\begin{equation*}
\Delta_\cD \in \langle p_1^*G'_i \otimes p_2^*G''_i \rangle^\oplus,
\end{equation*}
where $p_1,p_2 \colon X \times_B X \to X$ are the projections 
and as before the superscript~$\oplus$ means the idempotent completion of the subcategory generated by objects in the angle brackets.
Since the Fourier--Mukai functor with kernel~$\Delta_\cD$ is isomorphic to the projection functor~$\pr_\cD$,
it follows that for any~$F \in \cD^\pf$ we have 
\begin{equation*}
F \cong 
p_{2*}(p_1^*F \otimes \Delta_\cD) \in 
\langle p_{2*}(p_1^*F \otimes p_1^*G'_i \otimes p_2^*G''_i) \rangle^\oplus =
\langle p_{2*}p_1^*(F \otimes G'_i) \otimes G''_i \rangle^\oplus =
\langle f^*f_*(F \otimes G'_i) \otimes G''_i \rangle^\oplus
\end{equation*}
(the first equality is the projection formula and the second is base change).
Since $f_*(\Dperf(X)) \subset \Dperf(B)$ (see~\cite[III.4.8]{SGA6})
it follows that the finite number of perfect objects~$G''_i$ generate~$\cD^\pf$ over~$B$ up to idempotent completion.
Therefore, for any base change~$B' \to B$ the pullbacks of~$G''_i$ to~$X' = X \times_B B'$ 
generate~$(\cD_{B'})^\pf$ over~$B'$ up to idempotent completion.
It follows that properness of~$(\cD_{B'})^\pf$ over~$B'$ is equivalent to perfectness 
of the finite number of objects~$\cRHom_{B'}(\phi^*G''_i,\phi^*G''_j)$,
where~$\phi \colon X' \to X$ is the base change morphism.

Since~$X$ is flat over~$B$, we have~$\cRHom_{B'}(\phi^*G''_i,\phi^*G''_j) \cong \phi_B^*\cRHom_B(G''_i,G''_j)$,
where~$\phi_B \colon B' \to B$ is the base change morphism,
so, Lemma~\ref{lemma:general} shows that if these objects are perfect for~$B' = \{b\}$, they are also perfect if~$B'$ is a small neighborhood of~$b$.
Therefore, the subset $B_{\mathrm{sm,pr}} \subset B$ is open.
It also follows from the second part of Lemma~\ref{lemma:sp-obvious}
that~$(\cD_{B_{\mathrm{sm,pr}}})^\pf$ is smooth and proper over~$B_{\mathrm{sm,pr}}$.

For the last statement we may assume that~$\cD^\pf$ is smooth and proper over~$B$ and~$B$ is regular.
Let~\mbox{$F \in \cD$}.
For any point $b \in B$ the morphism~$\{b\} \to B$ has finite~$\Tor$-dimension (because~$B$ is regular), 
hence the same is true for the embedding of the fiber~$i \colon X_b \to X$, hence~$i^*F \in \cD_b$.
On the other hand, since the category~$(\cD_b)^\pf$ is smooth and proper over the residue field of~$b$, 
it follows from Proposition~\ref{prop:perf-coh} that~$\cD_b = (\cD_b)^\pf$, hence~$i^*F$ is perfect.
Then by Lemma~\ref{lemma:general} the object~$F$ is perfect in a neighborhood of~$X_b$.
Since this is true for any~$b \in B$, we conclude that~$F$ is perfect, hence~$\cD \subset \Dperf(X)$ and so~$\cD^\pf = \cD$.
\end{proof}

\section{Construction of simultaneous categorical resolutions}

The goal of this section is to construct 
a simultaneous categorical resolution of~$(X,X_o)$ under appropriate assumptions.

In~\S\ref{subsection:perfect-base-change} we recall from~\cite{K08} the construction of 
a categorical resolution~$\cD$ of the total space~$X$ 
(see Theorem~\ref{theorem:categorical} below),
check that it is a~$B$-linear category, and compute the prefect part of its central fiber~$\cD_o$ 
as a subcategory of the perfect derived category of the reducible central fiber of the blowup~$\tX$ of~$X$.

In~\S\ref{subsection:reducible} we show that under appropriate assumptions a subcategory of the derived category of a reducible scheme
can be identified with a subcategory of one of its components.

In~\S\ref{subsection:proof} we impose extra conditions on the categorical resolution~$\cD$ 
constructed in~\S\ref{subsection:perfect-base-change} which imply that its central fiber~$\cD_o$ is smooth and proper 
and that~$\cD$ itself is smooth and proper over~$B$;
in particular we state and prove our main result, Theorem~\ref{theorem:simultaneous}.
We also list in Example~\ref{ex:scr} several cases where Theorem~\ref{theorem:simultaneous} can be applied.

Finally, in~\S\ref{subsection:examples} we prove Theorem~\ref{theorem:nodal-general}, 
a more general and precise version of Theorem~\ref{theorem:simultaneous-nodal} from the Introduction,
and show that it provides an example of a categorical flop.

\subsection{Categorical resolution and its central fiber}
\label{subsection:perfect-base-change}

First, we recall the setup we are working in and introduce some notation.
Let~$\kk$ be an arbitrary field.
Let~$B$ be a smooth curve over~$\kk$ with a (closed) base point $o \in B$, 
let~$f \colon X \to B$ be a flat projective morphism with central fiber $X_o$
such that the morphism~$X^o \to B^o$ is smooth (we use notation~\eqref{eq:notationn-b-o}).
Let~$Z \subset X_o$ be a smooth closed subscheme in the central fiber 
such that both the total space~$X$ and the central fiber~$X_o$ are smooth away from~$Z$. 
Let 
\begin{equation*}
\pi \colon \tX \coloneqq \Bl_{Z}(X) \to X 
\end{equation*}
be the blowup of~$X$ at~$Z$.
We denote by~$E$ the exceptional divisor of~$\pi$ and by~$\eps \colon E \hookrightarrow \tX$ its embedding.
We have the blowup diagram
\begin{equation*}
\xymatrix{
E \ar[r]^\eps \ar[d]_p &
\tX \ar[d]^\pi
\\
Z \ar[r] &
X
}
\end{equation*}
Finally, we denote by 
\begin{equation}
\label{def:l}
L \coloneqq \cO_E(-E)
\end{equation}
the conormal bundle of~$E$ in~$\tX$.

The construction of categorical resolution in~\cite{K08} starts with a choice of
a $Z$-linear left \emph{Lefschetz decomposition} of~$\Db(E)$ with respect to~$L$, 
i.e., a semiorthogonal decomposition of the form
\begin{equation}
\label{eq:Lefschetz-e}
\Db(E) = \langle \cA_{1-m} \otimes L^{1-m}, \dots, \cA_{-1} \otimes L^{-1}, \cA_0 \rangle,
\end{equation}
where the~$\cA_i$ form a chain of $Z$-linear subcategories
\begin{equation*}
 0 \subset \cA_{1-m} \subset \dots \subset \cA_{-1} \subset \cA_0 \subset \Db(E).
\end{equation*}
It is assumed additionally that 
\begin{equation}
\label{eq:co-e-assumption}
p^*(\Dperf(Z)) \subset \cA_0.
\end{equation}

Then the following result is proved (recall Definition~\ref{def:cr}).

\begin{theorem}[{\cite{K08}}]
\label{theorem:categorical}
Assume $X$ is a quasiprojective scheme with rational singularities, 
$Z \subset X$ is a closed subscheme such that the blowup~\mbox{$\tX = \Bl_{Z}(X)$} is smooth,
and its exceptional divisor~$E$ is endowed with a $Z$-linear left Lefschetz decomposition~\eqref{eq:Lefschetz-e} 
with respect to the conormal bundle~$L$
such that every component~$\cA_k$ of~\eqref{eq:Lefschetz-e} is admissible in~$\Db(E)$ and~\eqref{eq:co-e-assumption} holds.
Set 
\begin{equation}
\label{eq:def-cd}
\cD \coloneqq \{F \in \Db(\tX) \mid \eps^*F \in \cA_0 \} \subset \Db(\tX).
\end{equation} 
Then $\pi^*(\Dperf(X)) \subset \cD$ and the triple $(\cD,\pi^*,\pi_*)$ provides a categorical resolution of~$X$.

Furthermore, the functor~$\eps_* \colon \Db(E) \to \Db(\tX)$ is fully faithful 
on the subcategories $\cA_k \subset \Db(E)$ for~$1-m \le k \le -1$,
these categories have the property $(\cA_k)^\pf = \cA_k$,
and there is an $X$-linear semiorthogonal decomposition
\begin{equation}
\label{eq:db-tx}
\Db(\tX) = \langle \eps_*(\cA_{1-m} \otimes L^{1-m}), \dots, \eps_*(\cA_{-1} \otimes L^{-1}), \cD \rangle
\end{equation}
with admissible components and projection functors of finite cohomological amplitude.

Finally, if~$X$ is proper then~$\cD^\pf = \cD$ and it is smooth and proper over~$\kk$.
\end{theorem}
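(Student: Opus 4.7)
The plan is to split the theorem into two logical blocks: the construction of the semiorthogonal decomposition (which is the core content of~\cite{K08}) and the deduction of the concluding smoothness/properness statement.

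Block one contains the inclusion $\pi^*(\Dperf(X))\subset\cD$, the categorical resolution property of $(\cD,\pi^*,\pi_*)$, the identities $(\cA_k)^\pf=\cA_k$, full faithfulness of $\eps_*$ on the components $\cA_k$, and the decomposition~\eqref{eq:db-tx} with admissibility and finite cohomological amplitude of projections. For these I would follow~\cite{K08}: the blowup square $\pi\circ\eps = i\circ p$ (with $i\colon Z\hookrightarrow X$ the inclusion) gives $\eps^*\pi^*F\cong p^*(F|_Z)$, which lies in $\cA_0$ by~\eqref{eq:co-e-assumption}, yielding $\pi^*(\Dperf(X))\subset\cD$; smoothness of~$E$ (a projective bundle over the smooth scheme~$Z$) together with Proposition~\ref{prop:extensions} give $(\cA_k)^\pf=\cA_k$; the isomorphism $\pi_*\pi^*\cong\id_{\Dperf(X)}$ follows from $R\pi_*\cO_{\tX}\cong\cO_X$ (valid because $X$ has rational singularities and $\pi$ is a smooth resolution) and the projection formula. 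The semiorthogonal decomposition~\eqref{eq:db-tx} is obtained by gluing the Lefschetz decomposition~\eqref{eq:Lefschetz-e} along the divisor embedding~$\eps$, using the formula $\eps^!(-)\cong\eps^*(-)\otimes L^{-1}[-1]$ and mutations through the Lefschetz components; admissibility and finite cohomological amplitude of the projections then follow from the corresponding properties of the $\cA_k$ and of the adjoint functors of~$\eps_*$.

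Block two is the concluding statement: if $X$ is proper, then $\cD^\pf=\cD$ and $\cD$ is smooth and proper over~$\kk$. Here~$\tX$ is smooth by hypothesis and proper over~$\kk$ (since~$X$ is proper and~$\pi$ is projective), so $\Db(\tX)=\Dperf(\tX)$. In particular $\cD\subset\Db(\tX)=\Dperf(\tX)$, which gives $\cD=\cD^\pf$. Block one presents $\cD$ as an admissible subcategory of $\Db(\tX)$ whose orthogonal $\langle\eps_*(\cA_{1-m}\otimes L^{1-m}),\dots,\eps_*(\cA_{-1}\otimes L^{-1})\rangle$ is also admissible and whose projection functors have finite cohomological amplitude. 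Applying Lemma~\ref{lemma:sp-geometric} to the structure morphism $\tX\to\Spec(\kk)$ (smooth and proper over~$\kk$), the smoothness and properness of~$\tX$ transfer to $\cD^\pf$, yielding smoothness and properness of $\cD=\cD^\pf$ over~$\kk$.

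The bulk of the technical work lives in block one --- specifically the gluing that produces~\eqref{eq:db-tx} together with the control of cohomological amplitude of the projection functors --- and is the main obstacle; that work is carried out in~\cite{K08}. Given those inputs, block two reduces to a short formal argument whose key observation is that smoothness of~$\tX$ collapses $\Db(\tX)$ onto $\Dperf(\tX)$, so that Lemma~\ref{lemma:sp-geometric} applies directly to propagate smoothness and properness from~$\tX$ to its admissible subcategory~$\cD$.
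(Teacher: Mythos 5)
Your two-block structure mirrors the paper's proof: block one defers essentially everything to~\cite{K08} (the paper cites \cite[Theorem~4.4]{K08} for the bulk, \cite[Proposition~4.1]{K08} for $(\cA_k)^\pf = \cA_k$, and \cite[Lemma~4.6]{K08} for $X$-linearity), and block two is exactly the paper's closing argument --- smoothness of $\tX$ forces $\Dperf(\tX) = \Db(\tX)$, hence $\cD^\pf = \cD$, and Lemma~\ref{lemma:sp-geometric} applied with $B = \Spec\kk$ transfers smoothness and properness to the admissible subcategory $\cD$.

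Two small inaccuracies in your block one, harmless here because that block is deferred to~\cite{K08} but worth noting. First, the exceptional divisor $E$ of $\Bl_Z(X)$ is \emph{not} in general a projective bundle over $Z$ when $X$ is singular along $Z$ (for an ordinary double point it is a quadric); in the generality of Theorem~\ref{theorem:categorical} the scheme $E$ is not even assumed smooth, so $(\cA_k)^\pf = \cA_k$ is not automatic, and neither smoothness of $E$ nor Proposition~\ref{prop:extensions} is what the paper invokes --- it cites \cite[Proposition~4.1]{K08}. Second, admissibility of $\cD$ itself is slightly more delicate than ``follows from the adjoints of $\eps_*$'': the decomposition~\eqref{eq:db-tx} gives right admissibility, but for left admissibility the paper passes to a \emph{right} Lefschetz decomposition of $\Db(E)$ and builds a companion decomposition of $\Db(\tX)$ using the left adjoint $\eps_!$ of $\eps^*$, placing $\cD$ as the first component. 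None of this touches block two, which is correct and coincides with the paper.
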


\begin{proof}
Most part of this is proved in~\cite[Theorem~4.4]{K08}.
Apart from that, the equality~\mbox{$(\cA_k)^\pf = \cA_k$} is established in~\cite[Proposition~4.1]{K08},
$X$-linearity of~$\eps_*(\cA_k \otimes L^k)$ follows easily from~$Z$-linearity of~$\cA_k$,
and~$X$-linearity of~$\cD$ is explained in~\cite[Lemma~4.6]{K08}.

Admissibility of the components~$\eps_*(\cA_k \otimes L^k)$ follows  
from the existence of both adjoints to~$\eps_*$ and admissibility of~$\cA_k$.
To prove admissibility of~$\cD$ note that admissibility of~$\cA_k$ 
implies the existence of a right Lefschetz decomposition (see~\cite[Lemma~2.19]{K08})
\begin{equation*}
\Db(E) = \langle \cA_0, \cA_1 \otimes L, \dots, \cA_{m-1} \otimes L^{m-1} \rangle
\end{equation*}
and then a similar argument (see~\cite[Proposition~3.11]{KP21}) gives the semiorthogonal decomposition 
\begin{equation*}
\Db(\tX) = \langle \cD, \eps_!(\cA_1 \otimes L), \dots, \eps_!(\cA_{m-1} \otimes L^{m-1}) \rangle,
\end{equation*}
where $\eps_!(\cF) = \eps_*\cF \otimes \cO_{\tX}(E)[-1]$ is the left adjoint functor of~$\eps^*$.
Now the above decomposition of~$\Db(\tX)$ shows that~$\cD$ is left admissible, 
while~\eqref{eq:db-tx} shows that it is right admissible.
The projection functors of the components of~\eqref{eq:db-tx} have finite cohomological amplitude by~\cite[Proposition~2.5]{K08}.

Finally, $\cD^\pf = \cD$ because~$\Dperf(\tX) = \Db(\tX)$ as~$\tX$ is smooth,
and if~$X$ is proper then~$\tX$ is smooth and proper and~$\cD^\pf$ is smooth and proper by Lemma~\ref{lemma:sp-geometric}.
\end{proof}

The simplest case where the theorem applies is the following.

\begin{example}
\label{ex:nodal}
Assume the base field is algebraically closed of characteristic not equal to~$2$.
Assume~$X$ has an ordinary double point at~$x_0$ and is smooth elsewhere.
Set~$Z = \{x_0\}$.
Then the blowup~\mbox{$\tX = \Bl_Z(X) = \Bl_{x_0}(X)$} is smooth, its exceptional divisor~$E$ is a smooth quadric, 
and the conormal line bundle~$L = \cO_E(-E)$ is the hyperplane line bundle of the quadric.
We consider the standard Lefschetz decomposition of a quadric, see~\cite[\S2.2]{KP19} defined by
\begin{equation*}
\cA_0 = \langle \cS, \cO \rangle,
\end{equation*}
where~$\cS$ is a spinor bundle on~$E$.
Then 
\begin{equation*}
\cA_i = 
\begin{cases}
\cA_0, & \text{\hbox to 1em {\hfill if} $i = -1$ and~$\dim(E)$ is even},\\
\langle \cO \rangle, & \text{\hbox to 1em {\hfill if} $-\dim(E) \le i \le -2$ and~$\dim(E)$ is even},\\ 
& \text{\hbox to 1em {\hfill or} $-\dim(E) \le i \le -1$ and~$\dim(E)$ is odd,}
\end{cases}
\end{equation*}
and is zero otherwise, see~\cite[Lemma~2.4]{KP19}.
Applying Theorem~\ref{theorem:categorical} we obtain a categorical resolution of~$X$ by the category
\begin{equation*}
\cD = \{ F \in \Db(\tX) \mid \eps^*F \in \langle \cS, \cO \rangle \}
\end{equation*}
and the semiorthogonal decomposition~\eqref{eq:db-tx}.
\end{example}

Now assume that $X$ is a scheme over~$B$, hence the same is true for the scheme~$\tX$.
Furthermore, the subcategory~$\cD \subset \Db(\tX)$ is $X$-linear, hence it is a fortiori~$B$-linear.
It makes sense, therefore, to consider its base change~$\cD_b$ to various points $b \in B$.
For $b \ne o$ a description of~$\cD_b$ is straightforward.

\begin{lemma}
\label{lemma:cd-b}
If $b \ne o$ then $\cD_b \simeq \Db(\tX_b) \simeq \Db(X_b)$.
\end{lemma}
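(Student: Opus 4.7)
The plan is to base change the $B$-linear semiorthogonal decomposition~\eqref{eq:db-tx} along the inclusion~$\{b\} \hookrightarrow B$, exploiting the fact that the blowup centre~$Z$ lies entirely in the central fibre. First, since $Z \subset X_o$, the morphism~$\pi \colon \tX \to X$ restricts to an isomorphism over~$B^o$; in particular, for $b \ne o$ the fibre~$\tX_b$ coincides with~$X_b$ and we immediately obtain the equivalence $\Db(\tX_b) \simeq \Db(X_b)$.

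For the identification $\cD_b \simeq \Db(\tX_b)$, I would apply Theorem~\ref{thm:base-change} to the decomposition~\eqref{eq:db-tx}. The required Tor-independence of~$\tX$ and~$\{b\}$ over~$B$ holds because~$\tX \to B$ agrees with the smooth (hence flat) morphism~$X^o \to B^o$ in a neighbourhood of~$\tX_b$. The theorem then produces a semiorthogonal decomposition of~$\Db(\tX_b)$ whose components are the base changes of $\eps_*(\cA_k \otimes L^k)$ for $1-m \le k \le -1$, together with~$\cD_b$.

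The crucial point is that each exceptional component is supported on the divisor~$E \subset \tX$, which by construction maps to the single point~$o \in B$. Hence $E \times_B \{b\} = \emptyset$ for $b \ne o$, so the pullback along $\phi \colon \tX_b \to \tX$ annihilates every object of $\eps_*(\cA_k \otimes L^k)$. Consequently the base-changed subcategories $\cP_{k,\{b\}}$ in Theorem~\ref{thm:base-change} all vanish, and the resulting decomposition of $\Db(\tX_b)$ collapses to its final component, yielding $\cD_b = \Db(\tX_b)$. There is no serious obstacle beyond tracing through the base change construction; the only thing needing a brief check is the Tor-independence hypothesis, which is automatic from the flatness of~$X^o \to B^o$.
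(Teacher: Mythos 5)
Your proposal is correct and matches the paper's proof: both base change the decomposition~\eqref{eq:db-tx} along $\{b\} \hookrightarrow B$ and deduce that the exceptional components vanish because $E_b = \varnothing$, leaving $\cD_b = \Db(\tX_b) = \Db(X_b)$. The only cosmetic difference is that the paper invokes~\cite[Theorem~6.4]{K11} for the vanishing of those components, whereas you argue it directly from the construction in Theorem~\ref{thm:base-change} — an alternative the paper itself acknowledges in a parenthetical remark.
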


\begin{proof}
By Theorem~\ref{thm:base-change} we have a semiorthogonal decomposition
\begin{equation*}
\Db(\tX_b) = \langle (\eps_*(\cA_{1-m} \otimes L^{1-m}))_b, \dots, (\eps_*(\cA_{-1} \otimes L^{-1}))_b, \cD_b \rangle. 
\end{equation*}
Now we apply~\cite[Theorem~6.4]{K11} that shows that each of the first $m-1$ components 
is a subcategory of the base change of the divisor~$E \subset \tX$ along the embedding~$b \hookrightarrow B$.
But~$E$ is supported over the point~$o$, therefore $E_b = \varnothing$ and so each of these components is zero
(this can be also deduced directly from the construction of base change without invoking the above theorem).
Thus, the last component~$\cD_b$ is equal to the entire category~$\Db(\tX_b)$
which is equal to~$\Db(X_b)$ because $\tX_b = X_b$ 
(since the center~$Z$ of the blowup~$\pi$ is contained in the central fiber~$X_o$ of~$f$ and so~$\pi$ is an isomorphism over~$B^o$).
\end{proof}

It is much trickier to describe the central fiber~$\cD_o$.
To do this we introduce more notation and impose an additional assumption:
we assume that the scheme central fiber of~$\tX$, which is a reducible scheme
\begin{equation}
\label{eq:tx-o}
\tX_o = X'_o \cup E,
\end{equation}
\emph{has reduced components}, where
\begin{equation*}
X'_o \cong \Bl_{Z}(X_o)
\end{equation*}
is the strict transform of the central fiber of~$X$.
We denote by 
\begin{equation*}
E_o \coloneqq X'_o \cap E
\end{equation*}
the exceptional divisor of~\mbox{$X'_o \to X_o$};
then we have the following commutative diagram
\begin{equation}
\label{diagram:divisor}
\vcenter{\xymatrix{
E_o \ar[r]^{i_E} \ar[d]_{i_X} &
E \ar[d]_{r_E} \ar[dr]^{\eps}
\\
X'_o \ar[r]_-{r_X} &
X'_o \cup E \ar[r]_-j &
\tX.
}}
\end{equation}

We start with an obvious observation.

\begin{lemma}
\label{lemma:e0-e}
The normal bundle of the central fiber~$\tX_o$ in~$\tX$ is trivial.
Moreover, if the central fiber~\eqref{eq:tx-o} is reduced then the normal bundles of its components are
\begin{equation*}
\cN_{X'_o/\tX} \cong \cO_{X'_o}(-E_o)
\qquad\text{and}\qquad 
\cN_{E/\tX} \cong \cO_{E}(-E_o).
\end{equation*}
In particular, the line bundle~\eqref{def:l} has the form~$L \cong \cO_E(E_o)$.
\end{lemma}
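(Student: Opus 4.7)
The plan is to deduce both assertions from the identification of the central fiber $\tX_o$ with the pullback Cartier divisor $(f\circ\pi)^*(o)$ on $\tX$, and then to decompose this Cartier divisor along its two reduced components.

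First, since $B$ is a smooth curve and $o \in B$ is a closed point, the normal bundle $\cN_{o/B}$ is a one-dimensional vector space over the residue field; in particular, $\cO_B(o)|_o$ is trivial. By construction $\tX_o = (f\circ\pi)^{-1}(o)$, and since $\tX$ is flat over $B$ in a neighborhood of~$\tX_o$ (all components of $\tX_o$ being of the expected codimension~$1$), we have $\cO_\tX(\tX_o) \cong (f\circ\pi)^*\cO_B(o)$. Restricting to $\tX_o$ yields $\cN_{\tX_o/\tX} \cong (f\circ\pi|_{\tX_o})^* \cO_B(o)|_o \cong \cO_{\tX_o}$, which proves triviality of the normal bundle of the central fiber.

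Second, under the reducedness hypothesis on the components, $\tX_o = X'_o + E$ holds as an equality of Cartier divisors on~$\tX$, so
\begin{equation*}
\cO_\tX(\tX_o) \;\cong\; \cO_\tX(X'_o) \otimes \cO_\tX(E).
\end{equation*}
Restricting this isomorphism to~$E$ and using the standard identifications $\cO_\tX(E)|_E \cong \cN_{E/\tX}$ together with $\cO_\tX(X'_o)|_E \cong \cO_E(X'_o \cap E) = \cO_E(E_o)$ (which follows from the definition of $E_o$ in diagram~\eqref{diagram:divisor}), combined with the triviality of $\cO_\tX(\tX_o)|_E$ established in the previous step, gives $\cO_E \cong \cO_E(E_o) \otimes \cN_{E/\tX}$, hence $\cN_{E/\tX} \cong \cO_E(-E_o)$. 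The symmetric computation, restricting to $X'_o$, yields $\cN_{X'_o/\tX} \cong \cO_{X'_o}(-E_o)$. The final assertion is then immediate: by definition $L = \cO_E(-E) = \cO_\tX(-E)|_E = \cN_{E/\tX}^*$, so dualizing the formula for $\cN_{E/\tX}$ gives $L \cong \cO_E(E_o)$.

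There is essentially no hard step here; this is a routine application of the adjunction formula for Cartier divisors. The only point that requires attention is the role of the reducedness hypothesis, which is precisely what permits writing $\tX_o = X'_o + E$ without multiplicities in the second part (the first part does not need it).
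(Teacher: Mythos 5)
Your proof is correct and takes essentially the same route as the paper's: both identify $\cN_{\tX_o/\tX}$ as the pullback of $\cO_B(o)|_o$ (hence trivial), then use reducedness to write $\tX_o = X'_o + E$ as Cartier divisors and restrict to a component to extract the normal bundle. The paper restricts to $X'_o$ first and declares the $E$ case analogous, while you do $E$ first; this is an immaterial reordering of the same computation.
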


\begin{proof}
The normal bundle of~$\tX_o$ is trivial because it is a fiber of a flat morphism to a smooth curve.
Therefore, we have the isomorphisms
\begin{equation*}
\cO_{X'_o} \cong
\cN_{\tX_o/\tX}\vert_{X'_o} \cong
\cO_{\tX_o}(E + X'_o)\vert_{X'_o} \cong
\cO_{X'_o}(E) \otimes \cO_{{X'_o}}(X'_o) \cong 
\cO_{X'_o}(E_o) \otimes \cN_{X'_o/\tX}
\end{equation*}
(the second isomorphism holds because the central fiber is reduced)
which gives the required isomorphism for~$\cN_{X'_o/\tX}$.
The isomorphism for~$\cN_{E/\tX}$ is proved similarly.
\end{proof}

The next proposition describes the perfect part of the category~$\cD_o$.
We use notation~\eqref{eq:cd-pf} and~\eqref{diagram:divisor}.

\begin{proposition}
\label{prop:cc-perf-b}
Assume the central fiber~\eqref{eq:tx-o} of~$\tX \to B$ is reduced.
If~$\cD \subset \Db(\tX)$ is the subcategory defined by~\eqref{eq:def-cd}
then the perfect part of its central fiber~$(\cD_o)^\pf \subset \Dperf(\tX_o \cup E)$ can be described as
\begin{equation}
\label{eq:ccb-characterization}
(\cD_o)^\pf = \{ F \in \Dperf(X'_o \cup E) \mid r_E^*F \in \cA_0 \}.
\end{equation}
\end{proposition}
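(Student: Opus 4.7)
My plan is to establish the two inclusions in~\eqref{eq:ccb-characterization} separately, based on the semiorthogonal decomposition of~$\Dperf(\tX_o)$ obtained by base change of~\eqref{eq:db-tx} along $\{o\} \hookrightarrow B$. Thanks to Theorem~\ref{thm:base-change} together with the equality~$\cD^\pf = \cD$ from Theorem~\ref{theorem:categorical}, this decomposition takes the form
\begin{equation*}
\Dperf(\tX_o) = \langle \cP_{1-m,o}, \dots, \cP_{-1,o}, (\cD_o)^\pf \rangle,
\end{equation*}
where each component~$\cP_{k,o}$ is the idempotent-completed triangulated envelope in~$\Dperf(\tX_o)$ of the objects~$j^*\eps_*(\cA_k \otimes L^k)$, and similarly $(\cD_o)^\pf$ is generated in this sense by~$j^*\cD$.

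For the inclusion~$(\cD_o)^\pf \subseteq \{F \mid r_E^* F \in \cA_0\}$, I use the factorization~$\eps = j \circ r_E$: for any~$G \in \cD$ it gives~$r_E^*(j^* G) \cong \eps^* G \in \cA_0$ by the defining condition~\eqref{eq:def-cd} of~$\cD$. Since~$\cA_0 \subset \Db(E)$ is triangulated and idempotent closed (being admissible), this condition propagates to the whole envelope generating~$(\cD_o)^\pf$.

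For the reverse inclusion, I use that the last component of a semiorthogonal decomposition is characterized by orthogonality to all earlier components, so it suffices to show that if~$r_E^* F \in \cA_0$, then~$\Hom(F, j^*\eps_* G) = 0$ for every~$k \le -1$ and~$G \in \cA_k \otimes L^k$. Since~$\eps_* = j_* r_{E*}$ and the normal bundle~$\cO_{\tX_o}(\tX_o)$ is trivial by Lemma~\ref{lemma:e0-e}, the Koszul resolution of~$\cO_{\tX_o}$ over~$\cO_\tX$ produces a distinguished triangle
\begin{equation*}
r_{E*} G[1] \to j^* \eps_* G \to r_{E*} G
\end{equation*}
in~$\Dperf(\tX_o)$. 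Applying $\Hom(F,-)$ and the adjunction~$r_E^* \dashv r_{E*}$ reduces the problem to the vanishing of $\Hom(r_E^* F, G)$ and~$\Hom(r_E^* F, G[1])$. Both vanish by the semiorthogonality of the Lefschetz decomposition~\eqref{eq:Lefschetz-e}, since~$r_E^* F$ lies in its rightmost component~$\cA_0$ while~$G[i]$ sits strictly earlier in~$\cA_k \otimes L^k$ for~$k \le -1$.

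The subtle ingredient is the triviality of~$\cO_{\tX_o}(\tX_o)$, which makes the pullback--pushforward computation for the Cartier divisor~$j \colon \tX_o \hookrightarrow \tX$ split into just two pieces of the same form~$r_{E*} G$; this is precisely where the reducedness assumption on the central fiber enters through Lemma~\ref{lemma:e0-e}. Everything else is a routine use of the base-change machinery of Theorem~\ref{thm:base-change}.
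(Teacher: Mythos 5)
Your argument is correct and matches the paper's proof step for step: the factorization $\eps = j \circ r_E$ together with the base-change description of $(\cD_o)^\pf$ gives the easy inclusion, and the reverse inclusion reduces, via the Koszul triangle for the Cartier divisor $j$ with trivial normal bundle and the adjunction $r_E^* \dashv r_{E*}$, to the semiorthogonality in~\eqref{eq:Lefschetz-e}. Only your closing remark slightly misattributes the role of the reducedness hypothesis: the triviality of $\cN_{\tX_o/\tX}$ holds simply because $\tX_o$ is a fiber of a flat morphism to a smooth curve, independent of reducedness, whereas the reducedness is what guarantees the scheme-theoretic identification $\tX_o = X'_o \cup E$ underlying the maps in diagram~\eqref{diagram:divisor}.
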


\begin{proof}
First, if $F \in \cD^\pf$ then by commutativity of~\eqref{diagram:divisor} and the definition of~$\cD$ we have
\begin{equation*}
r_E^*j^*F \cong \eps^*F \in \cA_0,
\end{equation*}
hence $j^*F$ belongs to the right side of~\eqref{eq:ccb-characterization}.
By identification~$(\cD_o)^\pf = \cP_o$ from Theorem~\ref{thm:base-change} this
implies that the entire category~$(\cD_o)^\pf$ is contained in the right side of~\eqref{eq:ccb-characterization}.

Furthermore, the semiorthogonal decomposition~\eqref{eq:db-tx} combined with Theorem~\ref{thm:base-change} implies that 
the orthogonal $((\cD_o)^\pf)^\perp$ in~$\Dperf(\tX_o)$ is generated by objects of the form~$j^*\eps_*G$,
where~$G \in \cA_k \otimes L^k$ for~$k \le -1$.
So, it remains to check that if~$F$ belongs to the right side of~\eqref{eq:ccb-characterization}
and~$G$ is one of objects as above then~$\Ext^\bullet(F,j^*\eps_*G) = 0$.
By~\eqref{diagram:divisor} we have~$j^*\eps_*G = j^*j_*r_{E*}G$ and since~$j$
is the embedding of a Cartier divisor with trivial normal bundle, we have a distinguished triangle
\begin{equation*}
j^*\eps_*G \to r_{E*}G \to r_{E*}G[2].
\end{equation*}
So, it is enough to check that $\Ext^\bullet(F,r_{E*}G) = 0$, which follows immediately from
the adjunction isomorphism~$\Ext^\bullet(F,r_{E*}G) \cong \Ext^\bullet(r_E^*F,G)$ and~\eqref{eq:Lefschetz-e}.
\end{proof}

\subsection{Perfect derived category of a reducible scheme}
\label{subsection:reducible}

In this subsection we prove Proposition~\ref{prop:cc-cc2-equivalence} that allows us 
to identify some subcategories of the perfect derived category
of a reducible scheme with subcategories of its irreducible component.
This result is crucial for the proof of our main Theorem~\ref{theorem:simultaneous} stated later,
where it will be applied to the category~$(\cD_o)^\pf$ described in Proposition~\ref{prop:cc-perf-b}.

Consider a reducible scheme 
\begin{equation*}
D = D_1 \cup D_2,
\end{equation*}
and consider the scheme intersection of its components
\begin{equation*}
D_1 \cap D_2 = D_0.
\end{equation*}
Then we have a commutative square of schemes
\begin{equation}
\label{diagram:reducible}
\vcenter{\xymatrix{
D_0 \ar[r]^{i_1} \ar[d]_{i_2} \ar[dr]^{r_0} &
D_1 \ar[d]^{r_1}
\\
D_2 \ar[r]_{r_2} &
D
}}
\end{equation}

In the last part of the following lemma we assume the category~$\cT$ and the functors~$\varphi_i$ to be enhanced;
here one can use a DG-enhancement as in Definitions~\ref{def:cr} and~\ref{def:scr} or an enhancement of any other type.

\begin{lemma}
\label{lemma:reducible}
Consider a reducible scheme~$D$ and the diagram~\eqref{diagram:reducible}.
\begin{enumerate}\renewcommand{\theenumi}{\roman{enumi}}
\item 
\label{item:triangle-object}
For any object~$G \in \Dqc(D)$ there is a canonical distinguished triangle
\begin{equation}
\label{eq:triangle-object}
G \to r_{1*}r_1^*G \oplus r_{2*}r_2^*G \to r_{0*}r_0^*G.
\end{equation} 
Its formation commutes with arbitrary direct sums in~$\Dqc(D)$.
\item 
\label{item:triangle-ext}
For any objects~$F,G \in \Dqc(D)$ there is a canonical distinguished triangle 
\begin{equation}
\label{eq:triangle-ext}
\Ext^\bullet(F,G) \to \Ext^\bullet(r_1^*F, r_1^*G) \oplus \Ext^\bullet(r_2^*F, r_2^*G) \to \Ext^\bullet(r_0^*F, r_0^*G).
\end{equation} 
In particular, $F \in \Dperf(D)$ if and only if~$r_1^*F \in \Dperf(D_1)$ and~$r_2^*F \in \Dperf(D_2)$.
\item 
\label{item:functor-dperf}
If~$\cT$ is an enhanced triangulated category and $\varphi_1 \colon \cT \to \Dperf(D_1)$, $\varphi_2 \colon \cT \to \Dperf(D_2)$
is a pair of enhanced functors such that $i_1^* \circ \varphi_1 \cong i_2^* \circ \varphi_2$, 
there is a functor $\varphi \colon \cT \to \Dperf(D)$ such that~$r_1^* \circ \varphi \cong \varphi_1$ and~$r_2^* \circ \varphi \cong \varphi_2$.
\end{enumerate}
\end{lemma}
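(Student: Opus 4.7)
The plan is to derive all three parts from the Mayer--Vietoris short exact sequence of $\cO_D$-modules
\[
0 \to \cO_D \to \cO_{D_1} \oplus \cO_{D_2} \to \cO_{D_0} \to 0,
\]
which encodes the scheme-theoretic decomposition $D = D_1 \cup D_2$ with intersection~$D_0$. For part~(\ref{item:triangle-object}), tensor this sequence with~$G$ over~$\cO_D$; the projection formula $r_{i*}\cO_{D_i} \otimes G \cong r_{i*} r_i^* G$ (valid for each closed embedding~$r_i$, $i \in \{0,1,2\}$) identifies the resulting distinguished triangle with~\eqref{eq:triangle-object}. Commutation with arbitrary direct sums in~$G$ is automatic because derived tensor product commutes with colimits.

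For part~(\ref{item:triangle-ext}), apply $\RHom_D(F, -)$ to the triangle from part~(\ref{item:triangle-object}) and use the adjunction $r_i^* \dashv r_{i*}$ to rewrite $\RHom_D(F, r_{i*} r_i^* G) \cong \RHom_{D_i}(r_i^* F, r_i^* G)$; passing to cohomology yields~\eqref{eq:triangle-ext}. For the perfectness characterization, recall that perfect is equivalent to compact in $\Dqc(D)$, i.e.\ $\Ext^\bullet(F, -)$ preserves arbitrary direct sums. If $r_1^*F$ and $r_2^*F$ are perfect, then so is $r_0^*F = i_1^* r_1^* F$; applying~\eqref{eq:triangle-ext} with~$G$ replaced by an arbitrary direct sum, all three outer terms commute with direct sums (using that $r_i^*$ does and that the $r_i^*F$ are compact), forcing the middle term to do so as well, so that~$F$ is compact and hence perfect. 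The converse is immediate since $r_i^*$ preserves perfectness.

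For part~(\ref{item:functor-dperf}), the plan is to use the fact---essentially a consequence of part~(\ref{item:triangle-object})---that $\Dperf(D)$ is the homotopy pullback of $\Dperf(D_1) \xrightarrow{i_1^*} \Dperf(D_0) \xleftarrow{i_2^*} \Dperf(D_2)$. Explicitly, $\varphi(T)$ should be defined as the fiber of
\[
r_{1*}\varphi_1(T) \oplus r_{2*}\varphi_2(T) \longrightarrow r_{0*}\, i_1^*\varphi_1(T),
\]
built from the adjunction units combined with the given isomorphism $i_1^*\varphi_1 \cong i_2^*\varphi_2$; comparing with~\eqref{eq:triangle-object} applied to $\varphi(T)$ itself shows $r_i^*\varphi(T) \cong \varphi_i(T)$, and perfectness of~$\varphi(T)$ then follows from part~(\ref{item:triangle-ext}). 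The main obstacle lies precisely here: at the triangulated level, fibers of morphisms are defined only up to non-canonical isomorphism, so the assignment $T \mapsto \varphi(T)$ cannot be made functorial from triangulated data alone. This is exactly why the enhancement hypothesis on $\cT, \varphi_1, \varphi_2$ is imposed---in a DG-enhancement (or equivalent $\infty$-categorical framework) the fiber of a morphism of functors is itself canonically a functor, and the construction above produces the required~$\varphi$.
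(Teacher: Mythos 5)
Parts (\ref{item:triangle-object}) and (\ref{item:triangle-ext}) of your proposal follow the paper's argument almost verbatim: tensor the Mayer--Vietoris short exact sequence of structure sheaves with~$G$, use the projection formula to identify the terms, apply $\RHom_D(F,-)$ and adjunction to get the $\Ext$ triangle, and for perfectness use compactness and the fact that the second and third terms of~\eqref{eq:triangle-ext} commute with direct sums when the restrictions of~$F$ are perfect. (Your phrase ``all three outer terms \dots\ forcing the middle term'' appears to mislabel which term is which, but the intended argument is clear and correct.)

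Part (\ref{item:functor-dperf}) is where you genuinely diverge from the paper, which simply cites the gluing theorem of Gaitsgory--Rozenblyum (GR17, Chapter 8, Corollary A.2.2). You instead sketch the construction: define~$\varphi(T)$ as the fiber of~$r_{1*}\varphi_1(T) \oplus r_{2*}\varphi_2(T) \to r_{0*}i_1^*\varphi_1(T)$, and claim that ``comparing with~\eqref{eq:triangle-object} applied to~$\varphi(T)$ itself shows $r_i^*\varphi(T) \cong \varphi_i(T)$.'' This step is not justified and is in fact the substantive content of the cited theorem. The difficulty is that for the closed embeddings~$r_i$, the unit $\varphi_i(T) \to r_i^*r_{i*}\varphi_i(T)$ is \emph{not} an isomorphism (unlike for open embeddings), and moreover $D_1$ and $D_2$ are in general not Tor-independent over~$D$; e.g.\ for two lines meeting at a node, $\cO_{D_1} \otimes^{\mathrm{L}}_{\cO_D} \cO_{D_2}$ has infinitely many nonvanishing Tor's. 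Consequently, applying~$r_1^*$ to the defining triangle of~$\varphi(T)$ produces the complicated terms $r_1^*r_{1*}\varphi_1(T)$, $r_1^*r_{2*}\varphi_2(T)$, $r_1^*r_{0*}(\cdots)$, and comparing this with the triangle~\eqref{eq:triangle-object} for~$\varphi(T)$ only yields a morphism of triangles that is an isomorphism on the first vertex --- which does not force the remaining vertical maps to be isomorphisms. Your instinct that the enhancement is needed so that the fiber is functorial in~$T$ is correct and worth recording, but the nontrivial part of the claim is precisely that the glued object restricts back to the given ones, and that requires the actual descent argument (or the cited result), not just a comparison of triangles.
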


\begin{proof}
\eqref{item:triangle-object}
The triangle~\eqref{eq:triangle-object} is obtained by tensoring~$G$ with the canonical exact sequence
\begin{equation*}
0 \to \cO_D \xrightarrow{ (r_1^*,r_2^*)\ } \cO_{D_1} \oplus \cO_{D_2} \xrightarrow{\ (i_1^*,-i_2^*)\ } \cO_{D_0} \to 0.
\end{equation*}
Formation of~\eqref{eq:triangle-object} commutes with direct sums because the tensor product does.

\eqref{item:triangle-ext}
The triangle~\eqref{eq:triangle-ext} follows from~\eqref{eq:triangle-object} by an application of~$\Ext^\bullet(F,-)$ and adjunction.
If~$r_1^*F$ and~$r_2^*F$ are perfect, so is $r_0^*F \cong i_1^*r_1^*F \cong i_2^*r_2^*F$, 
therefore the second and third terms of~\eqref{eq:triangle-ext} commute with arbitrary direct sums in the second arguments.
Using~\eqref{eq:triangle-ext} we deduce that~$\Ext^\bullet(F,-)$ commutes with direct sums, therefore~$F$ is compact, and hence perfect.
The other implication for perfectness is obvious.

\eqref{item:functor-dperf}
This is~\cite[Chapter 8, Corollary~A.2.2]{GR17}.
\end{proof}

Fix a full triangulated subcategory $\cA \subset \Dperf(D_1)$ and denote by $i_1^*(\cA) \subset \Dperf(D_0)$ 
its image under the pullback functor~$i_1^* \colon \Dperf(D_1) \to \Dperf(D_0)$.
Consider the full subcategories
\begin{alignat}{3}
\label{def:cc}
\cC & \coloneqq \{ F &&\in \Dperf(D) &&\mid r_1^*F \in \cA \},\\
\label{def:cc2}
\cC_2 & \coloneqq \{ F_2 &&\in \Dperf(D_2) &&\mid i_2^*F_2 \in i_1^*(\cA) \}.
\end{alignat} 
We prove the following 

\begin{proposition}
\label{prop:cc-cc2-equivalence}
Assume the functor $i_1^* \colon \cA \to \Dperf(D_0)$ is fully faithful, 
so that $i_1^*(\cA) \subset \Dperf(D_0)$ is a full triangulated subcategory equivalent to~$\cA$.
Then the functor $r_2^* \colon \Dperf(D) \to \Dperf(D_2)$ induces an equivalence of categories $\cC \simeq \cC_2$ defined by~\eqref{def:cc} and~\eqref{def:cc2}.
\end{proposition}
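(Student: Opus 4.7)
The plan is to verify directly that $r_2^*$ sends $\cC$ to $\cC_2$, is fully faithful on $\cC$, and is essentially surjective onto $\cC_2$, using the three parts of Lemma~\ref{lemma:reducible} as the main tools.

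\textbf{Well-definedness.} If $F \in \cC$ then $r_1^*F \in \cA$, hence $i_1^*r_1^*F \in i_1^*(\cA)$. Commutativity of diagram~\eqref{diagram:reducible} gives $i_1^*r_1^*F \cong i_2^*r_2^*F$, so $r_2^*F \in \cC_2$ (and $r_2^*F$ is perfect since $F$ is).

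\textbf{Fully faithfulness.} For $F,G \in \cC$, part~\eqref{item:triangle-ext} of Lemma~\ref{lemma:reducible} supplies a distinguished triangle
\begin{equation*}
\Ext^\bullet(F,G) \xrightarrow{\alpha} \Ext^\bullet(r_1^*F, r_1^*G) \oplus \Ext^\bullet(r_2^*F, r_2^*G) \xrightarrow{\beta} \Ext^\bullet(r_0^*F, r_0^*G),
\end{equation*}
with $\alpha = (r_1^*, r_2^*)$ and $\beta(a,b) = i_1^*a - i_2^*b$. The restriction of $i_1^*$ to $\cA$ is fully faithful by assumption, and $r_1^*F, r_1^*G \in \cA$; hence $i_1^* \colon \Ext^\bullet(r_1^*F, r_1^*G) \to \Ext^\bullet(r_0^*F, r_0^*G)$ is an isomorphism. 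In particular $\beta$ is surjective in every degree, so the long exact sequence collapses to an isomorphism $\Ext^\bullet(F,G) \cong \ker(\beta)$. On the other hand, since $r_2^*F, r_2^*G \in \cC_2$ we have $i_2^*r_2^*F, i_2^*r_2^*G \in i_1^*(\cA)$, so for each $b \in \Ext^\bullet(r_2^*F, r_2^*G)$ the element $i_2^*b$ lies in the image of the isomorphism $i_1^*$, and there is a unique $a$ with $i_1^*a = i_2^*b$. Projection onto the second factor therefore identifies $\ker(\beta)$ with $\Ext^\bullet(r_2^*F, r_2^*G)$; composed with $\alpha$ this is precisely $r_2^*$, proving full faithfulness.

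\textbf{Essential surjectivity.} Given $F_2 \in \cC_2$, we have $i_2^*F_2 \in i_1^*(\cA)$, so by fully faithfulness of $i_1^* \colon \cA \to \Dperf(D_0)$ there exists $F_1 \in \cA$ together with an isomorphism $\gamma \colon i_1^*F_1 \xrightarrow{\sim} i_2^*F_2$. Apply Lemma~\ref{lemma:reducible}\eqref{item:functor-dperf} to these data (concretely, to the pair of functors from a one-object enhanced category picking out $F_1$ and $F_2$, related by the isomorphism $\gamma$, or — to get a functor globally — to a chosen quasi-inverse to $i_1^*\colon \cA\to i_1^*(\cA)$ composed with $i_2^*|_{\cC_2}$ and the inclusion $\cC_2\hookrightarrow\Dperf(D_2)$). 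This produces $F \in \Dperf(D)$ with $r_1^*F \cong F_1 \in \cA$ and $r_2^*F \cong F_2$; in particular $F \in \cC$ and it maps to $F_2$ up to isomorphism.

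\textbf{Main obstacle.} The step requiring the most care is essential surjectivity, since we must promote the object-level gluing to an actual functor $\cC_2 \to \cC$ that is quasi-inverse to $r_2^*$, rather than merely constructing preimages object by object. This is handled by applying Lemma~\ref{lemma:reducible}\eqref{item:functor-dperf} to the pair of functors $\cC_2 \to \Dperf(D_1)$ and $\cC_2 \hookrightarrow \Dperf(D_2)$ described above, whose compositions with $i_1^*$ and $i_2^*$ are canonically isomorphic; the cited gluing result provides the required enhanced functor.
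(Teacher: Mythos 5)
Your proof is correct, and it leans on the same two tools the paper uses (the Ext-triangle of Lemma~\ref{lemma:reducible}\eqref{item:triangle-ext} and the gluing statement Lemma~\ref{lemma:reducible}\eqref{item:functor-dperf} applied to $\varphi_1 = (i_1^*|_\cA)^{-1} \circ i_2^*|_{\cC_2}$ and $\varphi_2 = $ the inclusion), but it organizes the final deduction differently. The paper constructs $\varphi \colon \cC_2 \to \cC$ first, then proves in a separate lemma that $\varphi$ is right adjoint to $r_2^*|_\cC$ (that lemma also uses the Ext triangle, applied to the pair $(F,\varphi(F_2))$), observes that $r_2^* \circ \varphi$ being the embedding forces $\varphi$ to be fully faithful, and finishes by checking $\Ker(r_2^*|_\cC)=0$ via the object triangle. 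You instead skip the adjunction entirely: you prove that $r_2^*$ is fully faithful by applying the Ext triangle to $(F,G)$ with $F,G\in\cC$ and showing that $i_1^*$ being an isomorphism on the first summand forces the connecting maps to vanish and $\ker(\beta)$ to project isomorphically onto the second summand, and you use the gluing functor $\varphi$ only for essential surjectivity. Your route is slightly more direct and avoids the adjunction formalism; the paper's route makes the functor $\varphi$ visible as an explicit quasi-inverse with a universal property (adjunction), which is convenient if one wants to reuse it. Both are correct. One small remark: in the fully-faithfulness step, the observation that $i_2^*r_2^*F, i_2^*r_2^*G$ lie in $i_1^*(\cA)$ is not really the point — what matters is that $i_1^*$ restricts to an isomorphism $\Ext^\bullet(r_1^*F,r_1^*G)\to\Ext^\bullet(r_0^*F,r_0^*G)$, which gives surjectivity of $\beta$ and lets you solve $i_1^*a = i_2^*b$ for every $b$; your argument already contains this, the extra sentence is harmless but superfluous.
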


The proof takes the rest of this subsection.
We start by defining a functor $\cC_2 \to \cC$.

\begin{lemma}
\label{lemma:varphi-definition}
Assume the functor $i_1^* \colon \cA \to \Dperf(D_0)$ is fully faithful.
There is a functor \mbox{$\varphi \colon \cC_2 \to \Dperf(D)$} such that $r_2^* \circ \varphi \colon \cC_2 \to \Dperf(D_2)$ is the natural embedding
and there is an isomorphism of functors
\begin{equation*}
i_1^* \circ r_1^* \circ \varphi \cong i_2^*\vert_{\cC_2} \colon \cC_2 \to \Dperf(D_0).
\end{equation*}
Moreover, $\varphi(\cC_2) \subset \cC$.
\end{lemma}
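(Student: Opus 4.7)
The plan is to apply Lemma~\ref{lemma:reducible}\eqref{item:functor-dperf} with $\cT = \cC_2$, which reduces the construction of $\varphi \colon \cC_2 \to \Dperf(D)$ to producing a pair of enhanced functors $\varphi_1 \colon \cC_2 \to \Dperf(D_1)$ and $\varphi_2 \colon \cC_2 \to \Dperf(D_2)$ equipped with an isomorphism $i_1^* \circ \varphi_1 \cong i_2^* \circ \varphi_2$; gluing them via the lemma will then produce $\varphi$ together with the identifications $r_k^* \circ \varphi \cong \varphi_k$ for $k = 1,2$, from which all three required properties will be read off directly.

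For $\varphi_2$ I would simply take the tautological inclusion $\cC_2 \hookrightarrow \Dperf(D_2)$. For $\varphi_1$, the key observation is that the hypothesis says the enhanced functor $i_1^*|_\cA \colon \cA \to \Dperf(D_0)$ is fully faithful with essential image the full subcategory $i_1^*(\cA)$, hence it induces an equivalence of enhanced triangulated categories $\cA \simeq i_1^*(\cA)$; let $\psi \colon i_1^*(\cA) \to \cA$ be a quasi-inverse. By the very definition of $\cC_2$, the pullback $i_2^*$ sends $\cC_2$ into $i_1^*(\cA) \subset \Dperf(D_0)$, so we may set
\begin{equation*}
\varphi_1 \coloneqq \psi \circ i_2^*\vert_{\cC_2} \colon \cC_2 \longrightarrow \cA \subset \Dperf(D_1).
\end{equation*}
The required compatibility is tautological: $i_1^* \circ \varphi_1 = i_1^* \circ \psi \circ i_2^*\vert_{\cC_2} \cong i_2^*\vert_{\cC_2} = i_2^* \circ \varphi_2$.

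With $\varphi$ produced by Lemma~\ref{lemma:reducible}\eqref{item:functor-dperf}, the three claims follow immediately: $r_2^* \circ \varphi \cong \varphi_2$ is the tautological inclusion; the composition $i_1^* \circ r_1^* \circ \varphi \cong i_1^* \circ \varphi_1 \cong i_2^*\vert_{\cC_2}$ gives the stated isomorphism; and for any $F_2 \in \cC_2$ we have $r_1^*\varphi(F_2) \cong \varphi_1(F_2) \in \cA$ by construction of $\varphi_1$, so $\varphi(F_2) \in \cC$ by the definition~\eqref{def:cc} of $\cC$.

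The only nontrivial point is the construction of $\psi$ at the enhanced level: one must know that a fully faithful enhanced functor onto a full triangulated subcategory can be inverted in the enhanced sense. This is a standard feature of the DG-enhancement formalism (Ext-groups in all degrees are encoded by the enhancement, so fully faithfulness automatically upgrades to an equivalence of DG-categories onto the essential image), and I expect this to be the main technical obstacle to keep track of, but not a genuine difficulty.
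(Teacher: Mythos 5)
Your proof is correct and follows essentially the same route as the paper: apply the gluing lemma (Lemma~\ref{lemma:reducible}\eqref{item:functor-dperf}) with $\varphi_2$ the inclusion and $\varphi_1$ the composition of $i_2^*\vert_{\cC_2}$ with the (enhanced) inverse of $i_1^*\vert_\cA$, then read off the three properties from the gluing isomorphisms $r_k^*\circ\varphi\cong\varphi_k$. Your closing remark about inverting a fully faithful enhanced functor onto its essential image at the enhanced level correctly identifies the one implicit technical step the paper passes over by writing $(i_1^*)^{-1}$.
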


\begin{proof}
The category~$\cC_2$ is a full subcategory in~$\Dperf(D_2)$, so it has a natural enhancement;
therefore, to construct the required functor we can use Lemma~\ref{lemma:reducible}\eqref{item:functor-dperf}.
So, we need to construct a pair of functors~$\varphi_s \colon \cC_2 \to \Dperf(D_s)$
and an isomorphism of functors~$i_1^* \circ \varphi_1 \cong i_2^* \circ \varphi_2$, see the diagram
\begin{equation*}
\xymatrix@C=5em{
\cC_2 \ar@{^{(}->}[d]^-{\varphi_2} \ar[r]^(.4){i_2^*} \ar@{..>}@/^1ex/[rdd]^(.35){\varphi_1} \ar@{..>}@/_3em/[dd]_{\varphi} &
i_1^*(\cA) \ar@{^{(}->}[d] \ar@/^4em/[ddd]^-{(i_1^*)^{-1}} &
\\
\Dperf(D_2) \ar[r]_(.4){i_2^*} &
\Dperf(D_0) 
\\
\Dperf(D) \ar[u]_-{r_2^*} \ar[r]^(.4){r_1^*} &
\Dperf(D_1) \ar[u]_-{i_1^*} &
\\
\cC \ar@{_{(}->}[u] \ar[r] &
\cA. \ar@{_{(}->}[u] 
}
\end{equation*}
We take~$\varphi_2$ to be the natural embedding and define~$\varphi_1$ as the composition
\begin{equation*}
\varphi_1 \colon \cC_2 \xrightarrow{\ i_2^*\ } i_1^*(\cA) \xrightarrow{\ (i_1^*)^{-1}\ } \cA \hookrightarrow \Dperf(D_1)
\end{equation*}
(note that we use the assumption of full faithfulness of~$i_1^*$ on~$\cA$ to define the middle arrow).
Then we have a canonical isomorphism $i_1^* \circ \varphi_1 \cong i_2^* \circ \varphi_2$, 
hence there exists a functor~$\varphi \colon \cC_2 \to \Dperf(D)$ such that
\begin{equation*}
r_1^* \circ \varphi \cong \varphi_1
\qquad 
r_2^* \circ \varphi \cong \varphi_2.
\end{equation*}
Moreover, we have $i_1^* \circ r_1^* \circ \varphi \cong i_1^* \circ \varphi_1 \cong i_2^* \circ \varphi_2 \cong i_2^*\vert_{\cC_2}$, 
as required.

For the last claim note that $r_1^*(\varphi(\cC_2)) = \varphi_1(\cC_2) \subset \cA$ by construction;
this means that $\varphi(\cC_2) \subset \cC$.
\end{proof}

Next we check that the constructed functor is adjoint to~$r_2^*$.

\begin{lemma}
\label{lemma:varphi-adjunction}
Assume the functor $i_1^* \colon \cA \to \Dperf(D_0)$ is fully faithful.
The functor $\varphi \colon \cC_2 \to \cC$ constructed in Lemma~\textup{\ref{lemma:varphi-definition}} 
is right adjoint to the functor $r_2^* \colon  \cC \to \cC_2$.
\end{lemma}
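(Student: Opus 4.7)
The plan is to apply the distinguished triangle of Lemma~\ref{lemma:reducible}\eqref{item:triangle-ext} with first argument $F \in \cC$ and second argument $G = \varphi(F_2)$ for $F_2 \in \cC_2$, and then exploit the full faithfulness hypothesis on $i_1^*\vert_\cA$ to collapse the triangle into the desired adjunction isomorphism.

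More concretely, I would begin by recording the three identifications supplied by Lemma~\ref{lemma:varphi-definition}: namely $r_2^*\varphi(F_2) \cong F_2$, $r_1^*\varphi(F_2) \cong \varphi_1(F_2) = (i_1^*)^{-1}(i_2^*F_2) \in \cA$, and $r_0^*\varphi(F_2) = i_1^*r_1^*\varphi(F_2) \cong i_2^*F_2$. Substituting these into~\eqref{eq:triangle-ext} yields a canonical distinguished triangle
\begin{equation*}
\Ext^\bullet(F,\varphi(F_2)) \to \Ext^\bullet(r_1^*F,\varphi_1(F_2)) \oplus \Ext^\bullet(r_2^*F,F_2) \xrightarrow{(\alpha,\beta)} \Ext^\bullet(r_0^*F,i_2^*F_2),
\end{equation*}
where the first component $\alpha$ of the middle arrow is induced by $i_1^*$.

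The key step is to observe that both $r_1^*F$ and $\varphi_1(F_2)$ lie in $\cA$ (the former because $F \in \cC$, the latter by construction of $\varphi_1$), so by the hypothesis that $i_1^*\vert_\cA$ is fully faithful, the map $\alpha$ is an isomorphism. Consequently the fiber of $(\alpha,\beta)$ projects isomorphically onto the second summand $\Ext^\bullet(r_2^*F,F_2)$, and by construction of the triangle this projection is nothing but the functor $r_2^*$ applied to morphisms $F \to \varphi(F_2)$. Taking $H^0$ gives the sought natural isomorphism
\begin{equation*}
\Hom_\cC(F,\varphi(F_2)) \xrightarrow{\;\sim\;} \Hom_{\cC_2}(r_2^*F,F_2),
\end{equation*}
which exhibits $\varphi$ as right adjoint to $r_2^*\vert_\cC$.

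The only non-routine points are to verify that the component $\alpha$ of the middle arrow in~\eqref{eq:triangle-ext} really is the map induced by the restriction functor $i_1^*$ (so that fully faithfulness on $\cA$ applies) and that the resulting isomorphism is indeed the unit/counit one expects from an adjunction; both follow from the canonical nature of the triangle~\eqref{eq:triangle-object} and the construction in Lemma~\ref{lemma:varphi-definition}. I expect this bookkeeping of the natural transformations — rather than any substantive obstacle — to be the main thing to check carefully.
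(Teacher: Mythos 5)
Your proposal is correct and follows essentially the same route as the paper: apply the $\Ext$-triangle of Lemma~\ref{lemma:reducible}\eqref{item:triangle-ext} to $F$ and $\varphi(F_2)$, use full faithfulness of $i_1^*$ on $\cA$ to see that the $D_1$-component of the map to the $D_0$-term is an isomorphism, and collapse the triangle to obtain the adjunction isomorphism. The one small omission is that you do not verify $r_2^*(\cC) \subset \cC_2$ at the outset (which the paper does, and which is needed so that $r_2^*\colon \cC \to \cC_2$ is even well-defined), but that is a one-line check from~\eqref{def:cc}, \eqref{def:cc2} and $i_1^*r_1^* \cong i_2^*r_2^*$.
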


\begin{proof}
First, note that $r_2^*(\cC) \subset \cC_2$; indeed, if $F \in \cC$ 
then~$r_1^*F \in \cA$ by~\eqref{def:cc}, hence
\begin{equation*}
i_2^*r_2^*F \cong i_1^*r_1^*F \in i_1^*(\cA),
\end{equation*}
and therefore~$r_2^*F \in \cC_2$ by~\eqref{def:cc2}.

Furthermore, let $F \in \cC$ and~$F_2 \in \cC_2$.
Using the construction of the functor~$\varphi$ from Lemma~\ref{lemma:varphi-definition} 
(that relies on Lemma~\ref{lemma:reducible}\eqref{item:functor-dperf})
and the triangle~\eqref{eq:triangle-ext} we obtain a distinguished triangle
\begin{equation*}
\Ext_D^\bullet(F,\varphi(F_2)) \xrightarrow{ (r_1^*,r_2^*)\ }
\Ext_{D_1}^\bullet(r_1^*F,\varphi_1(F_2)) \oplus 
\Ext_{D_2}^\bullet(r_2^*F,\varphi_2(F_2)) \xrightarrow{\ (i_1^*,-i_2^*)\ }
\Ext_{D_0}^\bullet(i_1^*r_1^*F,i_1^*(\varphi_1(F_2))), 
\end{equation*}
where we provide~$\Ext^\bullet$ with subscripts to emphasize which scheme we are working on. 
Recall also that~$\varphi_s \cong r_s^* \circ \varphi$.
Since $r_1^*F \in \cA$ by definition of~$\cC$ and $\varphi_1(F_2) \in \cA$ by construction of~$\varphi_1$, 
full faithfulness of~$i_1^*$ on~$\cA$ means that the morphism
\begin{equation*}
i_1^* \colon 
\Ext_{D_1}^\bullet(r_1^*F,\varphi_1(F_2)) \to
\Ext_{D_0}^\bullet(i_1^*r_1^*F,i_1^*(\varphi_1(F_2))) 
\end{equation*}
is an isomorphism.
Therefore,
\begin{equation*}
\Ext_{D}^\bullet(F,\varphi(F_2)) \cong 
\Ext_{D_2}^\bullet(r_2^*F,\varphi_2(F_2)),
\end{equation*}
proves the adjunction.
\end{proof}

Now we can deduce the proposition.

\begin{proof}[Proof of Proposition~\textup{\ref{prop:cc-cc2-equivalence}}]
Let~$\varphi \colon \cC_2 \to \cC$ be the functor constructed in Lemma~\ref{lemma:varphi-definition}.
By Lemma~\ref{lemma:varphi-adjunction} it is right adjoint to the functor~$r_2^* \colon \cC \to \cC_2$.
Moreover, we know from Lemma~\ref{lemma:varphi-definition} that~$r_2^* \circ \varphi$ 
is the natural embedding~$\varphi_2 \colon \cC_2 \hookrightarrow \Dperf(D_2)$.
This means that~$\varphi$ is fully faithful and it remains to check that~$\Ker(r_2^*\vert_\cC) = 0$.

So, assume~$F \in \cC$ is such that~$r_2^*F = 0$.
By~\eqref{diagram:reducible} we have
\begin{equation*}
i_1^*r_1^*F \cong i_2^*r_2^*F = 0.
\end{equation*}
Since~$r_1^*F \in \cA$ by definition of~$\cC$ and~$i_1^*$ is fully faithful on~$\cA$, it follows that~$r_1^*F = 0$.
Finally, the equalities~$r_1^*F = 0$, $r_2^*F = 0$, and~$r_0^*F \cong i_2^*r_2^*F = 0$
imply that~$F = 0$ by Lemma~\ref{lemma:reducible}\eqref{item:triangle-object}.
\end{proof}

\subsection{The main result}
\label{subsection:proof}

In this subsection we prove that the base change~$\cD_o$ of the category~$\cD$ defined by~\eqref{eq:def-cd} is smooth and proper over~$\kk$
if the Lefschetz decomposition~\eqref{eq:Lefschetz-e} satisfies appropriate conditions 
and deduce that in this case~$\cD$ is smooth and proper over~$B$.

We start with a simple observation that allows us to apply results from~\S\ref{subsection:reducible}.

\begin{lemma}
\label{lemma:ies}
Assume a left Lefschetz decomposition~\eqref{eq:Lefschetz-e} is given.
The functor $i_E^* \colon \Db(E) \to \Db(E_o)$ is fully faithful on the subcategories~$\cA_k \subset \Db(E)$ for~$1 -m \le k \le -1$
and preserves their semiorthogonality, so that we have a subcategory with a semiorthogonal decomposition
\begin{equation*}
\langle \cA'_{1-m} \otimes L^{2-m}, \dots, \cA'_{-2} \otimes L^{-1}, \cA'_{-1} \rangle \subset \Db(E_o),
\end{equation*}
where we denote $\cA'_{k} \coloneqq i_E^*(\cA_k) \subset \Db(E_o)$ and abbreviate $L\vert_{E_o}$ to just~$L$.
\end{lemma}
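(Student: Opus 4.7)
The plan is to exploit the standard mechanism for comparing $\Hom$'s on a Cartier divisor and the ambient scheme. By Lemma~\ref{lemma:e0-e} we have $L = \cO_E(-E) \cong \cO_E(E_o)$, so $E_o \hookrightarrow E$ is a Cartier divisor whose associated line bundle is $L$. The short exact sequence $0 \to L^{-1} \to \cO_E \to i_{E*}\cO_{E_o} \to 0$, tensored with an arbitrary $H \in \Db(E)$, gives a functorial distinguished triangle
\begin{equation*}
H \otimes L^{-1} \to H \to i_{E*} i_E^* H.
\end{equation*}
Combined with the adjunction $\Hom_{E_o}(i_E^* F, i_E^* G) \cong \Hom_E(F, i_{E*} i_E^* G)$ and the projection formula (which identifies $i_E^*(-) \otimes L^j$ with $i_E^*(- \otimes L^j)$), this is the only tool I will need.

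For the first assertion, applying the triangle with $H = G$ yields
\begin{equation*}
\Hom_E(F, G \otimes L^{-1}) \to \Hom_E(F, G) \to \Hom_{E_o}(i_E^* F, i_E^* G),
\end{equation*}
so full faithfulness on $\cA_k$ reduces to showing $\Hom^\bullet_E(F, G \otimes L^{-1}) = 0$ for $F, G \in \cA_k$ with $1-m \le k \le -1$. The chain $\cA_{1-m} \subset \dots \subset \cA_{-1} \subset \cA_0$ places $F \in \cA_0$ and $G \otimes L^{-1} \in \cA_{-1} \otimes L^{-1}$, and the required vanishing is precisely the semiorthogonality of these two components of~\eqref{eq:Lefschetz-e}.

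For the second assertion, fix $k < k'$ in $[1-m,-1]$ and $F \in \cA_k$, $G \in \cA_{k'}$; I must check that
\begin{equation*}
\Hom_{E_o}\bigl(i_E^*(G \otimes L^{k'+1}),\, i_E^*(F \otimes L^{k+1})\bigr) = 0.
\end{equation*}
Adjunction followed by the triangle above applied to $H = F \otimes L^{k+1}$ reduces this to the two vanishings
\begin{equation*}
\Hom^\bullet_E(G \otimes L^{k'+1}, F \otimes L^{k+1}) = 0
\qquad\text{and}\qquad
\Hom^\bullet_E(G \otimes L^{k'+1}, F \otimes L^{k}) = 0.
\end{equation*}
After an overall twist by $L^{-1}$, the first one becomes $\Hom^\bullet_E(G \otimes L^{k'}, F \otimes L^{k})$, which vanishes by the semiorthogonality of~\eqref{eq:Lefschetz-e} since $\cA_{k'} \otimes L^{k'}$ lies strictly to the right of $\cA_k \otimes L^k$. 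For the second one, the inclusion $\cA_{k'} \subset \cA_{k'+1}$ (interpreted as $\cA_{-1} \subset \cA_0$ when $k' = -1$) promotes $G \otimes L^{k'+1}$ into the genuine Lefschetz component $\cA_{k'+1} \otimes L^{k'+1}$, which, because $k'+1 > k$, again lies strictly to the right of $\cA_k \otimes L^k$, so semiorthogonality kills it.

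The main bookkeeping obstacle is exactly the second vanishing: the twist $L^{k+1}$ in the statement (rather than $L^k$) is calibrated so that applying the Cartier divisor triangle produces one term $\Hom(G \otimes L^{k'+1}, F \otimes L^{k+1})$ that fits directly into the Lefschetz decomposition and one term $\Hom(G \otimes L^{k'+1}, F \otimes L^{k})$ that only fits after the nested containment $\cA_j \subset \cA_{j+1}$ is used to push $G \otimes L^{k'+1}$ into the next component. Once these placements are understood, everything follows from the semiorthogonality of~\eqref{eq:Lefschetz-e}.
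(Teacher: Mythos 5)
Your proof is correct and is exactly the standard argument (Cartier-divisor triangle $H\otimes L^{-1}\to H\to i_{E*}i_E^*H$ plus the semiorthogonality of the Lefschetz decomposition, with the nesting $\cA_j\subset\cA_{j+1}$ used to reindex) that the paper invokes by reference to~\cite[Theorem~6.3]{K07} or~\cite[Proposition~2.4]{K08c} rather than spelling out. Same approach, just made explicit.
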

\begin{proof}
By Lemma~\ref{lemma:e0-e} the divisor $E_o$ corresponds to a section of the line bundle~$L$, 
so both full faithfulness and semiorthogonality follow from the standard properties of Lefschetz decompositions,
see~\cite[Theorem~6.3]{K07} or~\cite[Proposition~2.4]{K08c}.
\end{proof}

Now we can state the main result of the paper.
Recall the notation introduced in~\eqref{eq:notationn-b-o}, \eqref{def:l}, and~\eqref{diagram:divisor}, 
especially the line bundle~$L$ and the maps~$i_X \colon E_o \to X'_o$ and~$i_E \colon E_o \to E$.

\begin{theorem}
\label{theorem:simultaneous}
Let~$f \colon X \to B$ be a flat projective morphism to a smooth pointed curve~$(B,o)$ such that~$X^o$ is smooth over~$B^o$ 
and let~$Z \subset X_o$ be a smooth closed subscheme.
Assume~$X$ has rational singularities, the blowups~$\tX = \Bl_{Z}(X)$, $X'_o = \Bl_{Z}(X_o)$,
and their exceptional divisors~$E$ and~$E_o$ are all smooth, and the central fiber~\eqref{eq:tx-o} of~$\tX \to B$ is reduced.
Let~\eqref{eq:Lefschetz-e} be a~$Z$-linear left Lefschetz decomposition of~$\Db(E)$ such that~\eqref{eq:co-e-assumption} holds.
Let~$\cD$ be the subcategory of~$\Db(\tX)$
defined by~\eqref{eq:def-cd}.
Assume additionally that the components~$\cA_k$ of~\eqref{eq:Lefschetz-e} satisfy the condition
\begin{equation}
\label{eq:ca1}
\cA_{-1} = \cA_0
\end{equation}
and that the embedding of Lemma~\textup{\ref{lemma:ies}} is an equality, i.e., 
\begin{equation}
\label{eq:Lefschetz-e0}
\Db(E_o) =
\langle \cA'_{1-m} \otimes L^{2-m}, \dots, \cA'_{-2} \otimes L^{-1}, \cA'_{-1} \rangle.
\end{equation}
Then 
\begin{enumerate}\renewcommand{\theenumi}{\roman{enumi}}
\item 
\label{theorem:item:cd-o}
The base change~$\cD_o$ of~$\cD$ along the embedding~$\{o\} \hookrightarrow B$ is smooth and proper over~$\kk$, and
\begin{equation}
\label{eq:cd-o}
\cD_o \simeq \{F \in \Db(X'_o) \mid i_X^*F \in \cA'_{-1} \} \subset \Db(X'_o).
\end{equation} 
Moreover, there is a semiorthogonal decomposition
\begin{equation}
\label{eq:sod-db-xprime-o}
\Db(X'_o) = \langle i_{X*}(\cA'_{1-m} \otimes L^{2-m}), \dots, i_{X*}(\cA'_{-2} \otimes L^{-1}), \cD_o \rangle.
\end{equation}
\item 
\label{theorem:item:cd}
The triple $(\cD,\pi^*,\pi_*)$ is a simultaneous categorical resolution of~$(X,X_o)$; 
in particular~$\cD$ is smooth and proper over~$B$.
\end{enumerate}
\end{theorem}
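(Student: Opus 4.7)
The plan is to prove part~\eqref{theorem:item:cd-o} first by identifying $(\cD_o)^\pf$ as an admissible subcategory of $\Db(X'_o)$ and establishing the decomposition~\eqref{eq:sod-db-xprime-o} directly on $X'_o$, then to deduce part~\eqref{theorem:item:cd} by combining the resulting central-fiber smoothness with Lemma~\ref{lemma:cd-b} via Theorem~\ref{theorem:sp}. I begin from Proposition~\ref{prop:cc-perf-b}, which gives
\[
(\cD_o)^\pf = \{ F \in \Dperf(X'_o \cup E) \mid r_E^*F \in \cA_0 \},
\]
and apply Proposition~\ref{prop:cc-cc2-equivalence} to the reducible scheme $X'_o \cup E$ with $D_1 = E$, $D_2 = X'_o$, $D_0 = E_o$, and $\cA = \cA_0$. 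The full faithfulness of $i_E^*$ on $\cA_0$ required by that proposition is precisely where assumption~\eqref{eq:ca1} enters: Lemma~\ref{lemma:ies} supplies it on $\cA_{-1}$, and the equality $\cA_{-1} = \cA_0$ extends it to $\cA_0$ with image $\cA'_{-1}$. The equivalence induced by $r_X^*$ then identifies $(\cD_o)^\pf$ with $\{ F \in \Dperf(X'_o) \mid i_X^*F \in \cA'_{-1}\}$, which coincides with the right side of~\eqref{eq:cd-o} since $X'_o$ is smooth.

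To establish the decomposition~\eqref{eq:sod-db-xprime-o}, I apply the divisor-embedding part of the construction from~\cite{K08} to the smooth Cartier divisor $i_X \colon E_o \hookrightarrow X'_o$: by Lemma~\ref{lemma:e0-e} the relevant conormal bundle is $L\vert_{E_o}$, and~\eqref{eq:Lefschetz-e0} supplies the Lefschetz data needed to run the staircase argument, which depends only on $X'_o$ being smooth. The resulting last component, namely $\{ F \in \Db(X'_o) \mid i_X^*F \in \cA'_{-1}\}$, is then admissible with admissible orthogonal and projection functors of finite cohomological amplitude, so Lemma~\ref{lemma:sp-geometric} shows it is smooth and proper over~$\kk$; by the equivalence of the previous paragraph the same holds for $(\cD_o)^\pf$.

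For part~\eqref{theorem:item:cd}, Lemma~\ref{lemma:cd-b} yields $(\cD_b)^\pf = \Dperf(X_b) = \Db(X_b)$ smooth and proper over the residue field of $b$ for every $b \neq o$, so $B_{\mathrm{sm,pr}} = B$ in the notation of Theorem~\ref{theorem:sp}; that theorem then gives smoothness and properness of $\cD^\pf = \cD$ over~$B$, and its final clause, applied to the regular one-point base $\{o\}$, gives $\cD_o = (\cD_o)^\pf$, completing~\eqref{eq:cd-o}. The adjunction $\pi^* \dashv \pi_*$ and the isomorphism $\pi_* \circ \pi^* \cong \id_{\Dperf(X)}$ are already supplied by Theorem~\ref{theorem:categorical}, so the triple $(\cD,\pi^*,\pi_*)$ satisfies Definition~\ref{def:scr}. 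The principal subtle point is the derivation of~\eqref{eq:sod-db-xprime-o}: one cannot quote Theorem~\ref{theorem:categorical} directly since $X'_o \to X_o$ is a resolution of a possibly singular ambient scheme rather than a blowup of a smooth variety, so the divisor-embedding argument of~\cite{K08} must be invoked afresh using only the smoothness of $X'_o$ and $E_o$ together with the Lefschetz data~\eqref{eq:Lefschetz-e0}.
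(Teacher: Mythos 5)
Your proof follows essentially the same route as the paper: use Proposition~\ref{prop:cc-perf-b} to identify $(\cD_o)^\pf$ inside $\Dperf(X'_o\cup E)$, transfer it to $\Dperf(X'_o)$ via Proposition~\ref{prop:cc-cc2-equivalence} (with full faithfulness of $i_E^*$ on $\cA_0=\cA_{-1}$ supplied by~\eqref{eq:ca1} and Lemma~\ref{lemma:ies}), obtain~\eqref{eq:sod-db-xprime-o} and smoothness/properness from the $\kk$-level construction on $X'_o$, deduce $\cD_o=(\cD_o)^\pf$ via Proposition~\ref{prop:perf-coh} (your appeal to the final clause of Theorem~\ref{theorem:sp} over the one-point base is just that proposition repackaged), and finish part~\eqref{theorem:item:cd} with Lemma~\ref{lemma:cd-b} and Theorem~\ref{theorem:sp}. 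All of this is correct and is what the paper does.

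One remark on your closing caveat. You say one ``cannot quote Theorem~\ref{theorem:categorical} directly since $X'_o\to X_o$ is a resolution of a possibly singular ambient scheme rather than a blowup of a smooth variety''; but Theorem~\ref{theorem:categorical} is stated precisely for blowups of singular schemes, so this is not the obstruction, and in fact the paper does quote Theorem~\ref{theorem:categorical} for the second blowup $\pi'\colon X'_o\to X_o$. The genuine subtlety, which you gesture at but mis-identify, is that Theorem~\ref{theorem:categorical} hypothesizes rational singularities of the base of the blowup, and Theorem~\ref{theorem:simultaneous} does not assume rational singularities of $X_o$. Your instinct to extract only the parts that are needed --- the semiorthogonal decomposition~\eqref{eq:sod-db-xprime-o} and smoothness/properness of its last component --- is the right one, since these depend only on $X'_o$ and $E_o$ being smooth and on the Lefschetz data, not on the rational-singularities assumption (which is used only for $\pi_*\pi^*\cong\id$ in Theorem~\ref{theorem:categorical}). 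So the re-derivation you propose is sound; only the stated reason for it needed correction.
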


\begin{proof}
Note that the components~$\cA_k$ of~\eqref{eq:Lefschetz-e} and~$\cA'_k$ of~\eqref{eq:Lefschetz-e0} are admissible 
because~$E$ and~$E_o$ are smooth, 
so the assumptions of Theorem~\ref{theorem:categorical} are satisfied,
both for the blowup~$\pi \colon \tX = \Bl_Z(X) \to X$ with Lefschetz decomposition~\eqref{eq:Lefschetz-e}
and for the blowup~$\pi' \colon X'_o = \Bl_Z(X_o) \to X_o$ with Lefschetz decomposition~\eqref{eq:Lefschetz-e0}.
The first implies that~$\cD$ is a categorical resolution for~$X$ and Proposition~\ref{prop:cc-perf-b} holds.
The latter implies that the category
\begin{equation*}
\cD' \coloneqq \{F \in \Db(X'_o) \mid i_X^*F \in \cA'_{-1} \} \subset \Db(X'_o) 
\end{equation*}
provides a categorical resolution for~$X'_o$; in particular that~$\cD'$ is smooth and proper over~$\kk$,
and that there is a semiorthogonal decomposition~\eqref{eq:sod-db-xprime-o} with~$\cD_o$ replaced by~$\cD'$.
So, to prove part~\eqref{theorem:item:cd-o} of the theorem we only need to check that~$\cD_o \simeq \cD'$.

We start by checking that $(\cD_o)^\pf \simeq \cD'$.
Recall that by Proposition~\ref{prop:cc-perf-b} the category~$(\cD_o)^\pf$ is described by~\eqref{eq:ccb-characterization},
so we need to identify the right side of~\eqref{eq:ccb-characterization} with~$\cD'$.
For this we apply the results of~\S\ref{subsection:reducible}.
We consider the reducible scheme $D = E \cup X'_o$ with $D_1 = E$, $D_2 = X'_o$, so that~$D_0 = E_o$.
Furthermore, we consider the subcategory~$\cA \coloneqq \cA_0 = \cA_{-1} \subset \Db(E) = \Dperf(E)$. 
Note that the functor~$i_E^* \colon \cA \to \Dperf(E_o)$ is fully faithful by~\eqref{eq:ca1} and Lemma~\ref{lemma:ies} and its image is~$\cA'_{-1}$. 
Thus, Proposition~\ref{prop:cc-cc2-equivalence} applies and proves 
\begin{equation*}
(\cD_o)^\pf \simeq \cD';
\end{equation*}
indeed, the above definition of the category~$\cD'$ agrees with~\eqref{def:cc2} because~$\Db(X'_o) = \Dperf(X'_o)$ as~$X'_o$ is smooth.
In particular, it follows that the category~$(\cD_o)^\pf$ is smooth and proper over~$\kk$ because~$\cD'$ is.
Now we apply Proposition~\ref{prop:perf-coh} which implies that~$\cD_o = (\cD_o)^\pf$ and hence $\cD_o \simeq \cD'$.
This proves~\eqref{theorem:item:cd-o}.

To prove~\eqref{theorem:item:cd} the only thing we need to show is that~$\cD$ is smooth and proper over~$B$ 
(all the rest follows from Theorem~\ref{theorem:categorical}).
For this we use Theorem~\ref{theorem:sp}.
So, let $B_{\mathrm{sm,pr}} \subset B$ be the locus of smooth and proper fibers of~$\cD$ over~$B$.
For~$b \in B^o$ the fiber~$\cD_b$ is equivalent to~$\Db(X_b)$ by Lemma~\ref{lemma:cd-b}, hence smooth and proper by Lemma~\ref{lemma:sp-geometric}.
On the other hand, the category~$\cD_o$ is smooth and proper over~$\kk$ by part~\eqref{theorem:item:cd-o}.
Therefore, the open subscheme~$B_{\mathrm{sm,pr}} \subset B$ contains all points of~$B$, hence $B_{\mathrm{sm,pr}} = B$,
and so~$\cD$ is smooth and proper over~$B$ by Theorem~\ref{theorem:sp}.
\end{proof}

\begin{remark}
\label{remark:divisorian-degeneration}
The same result can be proved in a more general situation when the base scheme~$B$ has arbitrary dimension,
the flat projective morphism~$f \colon X \to B$ is smooth over the complement $B^o = B \setminus B_o$ of a Cartier divisor~$B_o \subset B$,
the subscheme~$Z \subset X_o \coloneqq X \times_B B_o$ is such that all fibers of~$f$ are smooth away from~$Z$,
the blowup~$\tX = \Bl_Z(X)$ is smooth over~$\kk$, 
the blowup~$\Bl_Z(X_o)$ is smooth over~$B_o$,
and the other assumptions are the same as in Theorem~\ref{theorem:simultaneous}.
\end{remark}

To find examples where Theorem~\ref{theorem:simultaneous} applies 
and produces simultaneous categorical resolutions we need fibrations~$E \to Z$
with relatively ample line bundle~$L$ and~$Z$-linear Lefschetz decomposition
satisfying the properties~\eqref{eq:co-e-assumption}, \eqref{eq:ca1}, and~\eqref{eq:Lefschetz-e0}.
Below we list some examples of such varieties over the trivial base (over more general bases one should consider their relative analogues) 
and their Lefschetz decompositions.

\begin{example}
\label{ex:scr}
 The following Lefschetz decompositions enjoy the properties~\eqref{eq:co-e-assumption}, \eqref{eq:ca1} and~\eqref{eq:Lefschetz-e0} 
 (for any \emph{smooth} divisor~$E_o \subset E$ in the linear system~$|L|$):
\begin{enumerate}
\item
\label{example:pn}
$E = \P^n$, $L = \cO(1)$, $\cA_0 = \cA_{-1} = \dots = \cA_{-n} = \langle \cO \rangle$;
\item
\label{example:segre}
$E = \P^{n_1} \times \P^{n_2}$, $L = \cO(1,1)$, $\cA_0 = \cA_{-1} = \dots = \cA_{-n_2} = p_1^*(\Db(\P^{n_1}))$, 
where~$p_1$ is the projection~$\P^{n_1} \times \P^{n_2} \to \P^{n_1}$ and we assume~$n_1 \le n_2$
(if~$n_1 > n_2$ a similar Lefschetz decomposition exists, but it does not have the property~\eqref{eq:Lefschetz-e0});
\item
\label{example:q2n}
$E = Q^{2n}$, $L = \cO(1)$, $\cA_0 = \cA_{-1} = \langle \cS, \cO \rangle$, $\cA_{-2} = \dots = \cA_{1-2n} = \langle \cO \rangle$, 
where $\cS$ is one of the two spinor bundles on the smooth even-dimensional quadric~$Q^{2n}$
(if~$E$ is a quadric of odd dimension a similar Lefschetz decomposition exists, see Example~\ref{ex:nodal},
but it does not have the property~\eqref{eq:ca1});
\item
\label{example:gr2n}
$E = \Gr(2,n)$, $L = \cO(1)$, 
$\cA_0 = \cA_{-1} = \dots = \cA_{-n} = \langle \cO, \cU^\vee, \dots, S^{\lfloor n/2 \rfloor - 1}\cU^\vee \rangle$ if~$n$ is odd,
and with a slightly more complicated formula if~$n$ is even, see~~\cite[Theorem~4.1]{K08c},
where $\cU$ is the tautological bundle on the Grassmannian~$\Gr(2,n)$.
\end{enumerate}

\end{example}

In case~\eqref{example:pn} the corresponding varieties $X$ and $X_o$ are smooth and it is easy to see 
that the categorical resolution of Theorem~\ref{theorem:simultaneous} is trivial (i.e., $\cD \simeq \Db(X)$, $\cD_o \simeq \Db(X_o)$).

Case~\eqref{example:segre} appears in the standard flipping contraction.
In this case the category~$\cD$ is equivalent to the derived category of the flip.

Case~\eqref{example:q2n} is the most interesting.
It corresponds to the assumption that~$X$ and~$X_o$ have ordinary double points and~$\dim(X_o)$ is~\emph{even}.
This is precisely the situation of Theorem~\ref{theorem:simultaneous-nodal} from the Introduction.
We will prove a more general result in the next subsection.

\subsection{Simultaneous categorical resolutions of nodal degenerations}
\label{subsection:examples}

Recall from~\cite[\S3.5]{K08b} that if~$p \colon E \to Z$ is a fibration in smooth quadrics of even dimension~$2n$,
then there is an \'etale double covering~$\tZ \to Z$, a Brauer class~$\beta \in \Br(\tZ)$ (represented by an explicit Azumaya algebra),
and a $Z$-linear Lefschetz decomposition
\begin{equation}
\label{eq:db-quadric-even}
\Db(E) = \langle p^*\Db(Z) \otimes \cO_E(1 - 2n), \dots, p^*\Db(Z) \otimes \cO_E(-1), 
p^*\Db(\tZ,\beta) \otimes \cS_E, p^*\Db(Z) \otimes \cO_E \rangle,
\end{equation}
where~$\cS_E$ is a~$\beta^{-1}$-twisted spinor bundle on~$E \times_Z \tZ$. 
We call~$\tZ \to Z$ and~$\beta$ the {\sf discriminant double cover} and {\sf Brauer class} of~$E/Z$.

Similarly, if~$p \colon E' \to Z$  is a fibration in smooth quadrics of odd dimension~$2n - 1$ then by~\cite[\S3.6]{K08b}
there is a Brauer class~$\beta \in \Br(Z)$ 
and a $Z$-linear Lefschetz decomposition
\begin{equation}
\label{eq:db-quadric-odd}
\Db(E') = \langle p^*\Db(Z) \otimes \cO_{E'}(2 - 2n), \dots, p^*\Db(Z) \otimes \cO_{E'}(-1), 
p^*\Db(Z,\beta) \otimes \cS_{E'}, p^*\Db(Z) \otimes \cO_{E'} \rangle,
\end{equation}
where~$\cS_{E'}$ is a~$\beta^{-1}$-twisted spinor bundle on~$E'$.

We say that a scheme~$Y$ is {\sf nodal} along a smooth subscheme~$Z \subset Y$, 
if~$\Sing(Y) = Z$ and a transverse slice to~$Z$ in~$Y$ at any point of~$Z$ has an ordinary double point.

\begin{theorem}
\label{theorem:nodal-general}
Let~$\kk$ be a field of characteristic not equal to~$2$.
Let $f \colon X \to B$ be a flat projective morphism to a smooth pointed $\kk$-curve $(B,o)$ such that~$X^o$ is smooth over~$B^o$.
Let~$Z \subset X_o$ be a smooth closed subscheme in the central fiber of~$f$ 
such that both~$X$ and~$X_o$ are nodal along~$Z$.
Assume
\begin{equation*}
\codim_Z(X_o) = 2n
\end{equation*}
is even.
Let~$\eps \colon E \to \tX = \Bl_Z(X)$ be the exceptional divisor of the blowup.
Let~$p \colon E \to Z$ be the corresponding fibration in smooth quadrics of dimension~$2n$.
Let~$\tZ \to Z$ be the corresponding discriminant \'etale double covering 
and let~$\beta \in \Br(\tZ)$ be the corresponding Brauer class.

If the covering~$\tZ \to Z$ splits then there is a semiorthogonal decomposition
\begin{equation}
\label{eq:sod-nodal-general}
\Db(\tX) = \langle \eps_*(p^*\Db(Z) \otimes \cO_E((2n-1)E)), \dots, \eps_*(p^*\Db(Z) \otimes \cO_E(E)), 
\eps_*(p^*\Db(Z,\beta) \otimes \cS), \cD \rangle ,
\end{equation}
where~$\cS$ is a~$\beta^{-1}$-twisted spinor bundle on~$E$
and the category~$\cD$ defined by this decomposition  provides a simultaneous categorical resolution of~$(X,X_o)$.
Moreover, the central fiber~$\cD_o$ of the category~$\cD$ fits into a semiorthogonal decomposition
\begin{equation*}
\Db(\Bl_Z(X_o)) = \langle i_{X*}(p_o^*\Db(Z) \otimes \cO_{E_o}((2n-2)E_o)), \dots, i_{X*}(p_o^*\Db(Z) \otimes \cO_{E_o}(E_o)), \cD_o \rangle,
\end{equation*}
where $i_X \colon E_o \hookrightarrow \Bl_Z(X_o)$ is the exceptional divisor of the blowup~$\Bl_Z(X_o)$ 
and~$p_o \colon E_o \to Z$ is the natural projection.
\end{theorem}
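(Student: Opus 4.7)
The plan is to derive the theorem as a direct application of Theorem~\ref{theorem:simultaneous} to the $Z$-linear left Lefschetz decomposition of~$\Db(E)$ obtained from~\eqref{eq:db-quadric-even}; the main task, beyond setting up the geometry, is to verify the hypotheses of Theorem~\ref{theorem:simultaneous}, especially condition~\eqref{eq:ca1} and the restriction condition~\eqref{eq:Lefschetz-e0}. Since~$X$ has only ordinary double points, it has rational singularities; the standard local model of the blowup of an ODP along its singular locus shows that~$\tX = \Bl_Z(X)$ and~$X'_o = \Bl_Z(X_o)$ are smooth, with exceptional divisors~$E$ and~$E_o$ smooth fibrations in quadrics of dimensions~$2n$ and~$2n-1$ over~$Z$, respectively, with~$E_o \subset E$ a relative hyperplane section cut out by the section of~$L \cong \cO_E(E_o)$ from Lemma~\ref{lemma:e0-e}; the central fiber~$\tX_o = X'_o \cup E$ is reduced because~$\tX$ is flat over~$B$ with its two components each appearing with multiplicity one.

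The splitting of the discriminant double cover~$\tZ \to Z$ trivializes the monodromy between the two spinor families over~$E$, so a chosen~$\beta^{-1}$-twisted spinor bundle~$\cS$ on~$E$ is globally defined, and~\eqref{eq:db-quadric-even} takes the shape of a $Z$-linear left Lefschetz decomposition~\eqref{eq:Lefschetz-e} of length~$m = 2n$ with
\begin{equation*}
\cA_0 = \cA_{-1} = \langle\, p^*\Db(Z,\beta) \otimes \cS,\ p^*\Db(Z)\, \rangle,
\qquad
\cA_{-k} = p^*\Db(Z) \text{ for } 2 \le k \le 2n - 1.
\end{equation*}
This verifies~\eqref{eq:co-e-assumption} and~\eqref{eq:ca1}, while admissibility of the~$\cA_k$ is standard for~$Z$-linear subcategories generated by (twisted) vector bundles on a smooth quadric fibration.

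The main technical step is checking the restriction condition~\eqref{eq:Lefschetz-e0}, i.e., that~$i_E^*$ carries the above Lefschetz decomposition onto the one~\eqref{eq:db-quadric-odd} for the odd-dimensional quadric fibration~$p_o \colon E_o \to Z$. For~$k \ne -1$ this is flat base change giving~$i_E^*\cA_{-k} = p_o^*\Db(Z)$, while the case~$k = -1$ reduces to identifying~$i_E^*\cS$ with a spinor bundle on~$E_o$. The key input here is the classical fact that the restriction of a spinor bundle on a smooth $2n$-dimensional quadric to a smooth hyperplane section recovers the spinor bundle on the resulting $(2n-1)$-dimensional quadric; the splitting of~$\tZ \to Z$ makes this identification available globally over~$Z$ with the same Brauer class~$\beta$ and the chosen bundle~$\cS$. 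Combined with Lemma~\ref{lemma:ies}, this accounts for all~$2n$ atomic components of~\eqref{eq:db-quadric-odd} and yields the required equality~\eqref{eq:Lefschetz-e0}.

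With all hypotheses satisfied, Theorem~\ref{theorem:simultaneous}\eqref{theorem:item:cd} produces the simultaneous categorical resolution~$(\cD, \pi^*, \pi_*)$; the decomposition~\eqref{eq:db-tx} of~$\Db(\tX)$ yields~\eqref{eq:sod-nodal-general} after expanding the block~$\eps_*(\cA_{-1} \otimes L^{-1})$ into its two summands and reorganizing by a mutation that relocates the spinor summand to twist~$L^0$, while part~\eqref{theorem:item:cd-o} of Theorem~\ref{theorem:simultaneous}, together with~\eqref{eq:sod-db-xprime-o} and the identification~$\cO_{E_o}(kE_o) = L^{-k}\vert_{E_o}$, delivers the stated decomposition of~$\Db(\Bl_Z(X_o))$ containing~$\cD_o$. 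The main obstacle will be the relative twisted spinor-restriction identification of the previous paragraph, where one must carefully track the Brauer twist~$\beta$ through all pullbacks and restrictions; the splitting hypothesis on~$\tZ \to Z$ is exactly what reduces this globally to the classical fiberwise spinor-restriction statement.
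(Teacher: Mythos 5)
Your proposal is correct and follows essentially the same route as the paper: both split the spinor block of~\eqref{eq:db-quadric-even} using the hypothesis that~$\tZ \to Z$ is trivial, mutate one half to produce a left Lefschetz decomposition with~$\cA_0 = \cA_{-1}$ of length~$m = 2n$, verify condition~\eqref{eq:Lefschetz-e0} by matching against~\eqref{eq:db-quadric-odd}, and then invoke Theorem~\ref{theorem:simultaneous}. The only discrepancy is cosmetic: you normalize~$\cA_0 = \langle p^*\Db(Z,\beta) \otimes \cS,\, p^*\Db(Z)\rangle$, whereas the paper uses~$\cA_0 = \langle p^*\Db(Z) \otimes \cO_E,\, p^*\Db(Z,\beta) \otimes \cS(1)\rangle$ (the same subcategory after relabeling the spinor bundle), which is why you need the extra mutation to put the spinor summand at twist~$L^0$ while the paper's convention gives~\eqref{eq:sod-nodal-general} directly from~\eqref{eq:db-tx}.
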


    \begin{proof}
Set $\cO_E(1) \coloneqq \cO_E(-E)$
and consider the semiorthogonal decomposition~\eqref{eq:db-quadric-even}.
If the covering~$\tZ \to Z$ splits as~$\tZ = Z_1 \sqcup Z_2$, 
the ``spinor'' component of the semiorthogonal decomposition also splits as
\begin{equation*}
p^*\Db(\tZ,\beta) \otimes \cS_{E} = \langle p^*\Db(Z_1,\beta) \otimes \cS_1, p^*\Db(Z_2,\beta) \otimes \cS_2 \rangle,
\end{equation*}
where~$\cS_i$ are the restrictions of~$\cS_{E}$ to~$E \times_Z Z_i \cong E$.
Mutating~$p^*\Db(Z_2,\beta) \otimes \cS_2$ to the right of~$p^*\Db(Z) \otimes \cO_E$
we obtain~$p^*\Db(Z_1,\beta) \otimes \cS_1(1)$ 
(this follows from a relative version~\cite[Lemma~4.5]{K08b} of the exact sequences~\eqref{eq:spinor-sequences}).
Thus, identifying the component~$Z_1$ of~$\tZ$ with~$Z$ and writing~$\cS$ instead of~$\cS_1$,
we can rewrite the obtained semiorthogonal decomposition of~$\Db(E)$ as
the Lefschetz decomposition~\eqref{eq:Lefschetz-e} with
\begin{equation}
\label{eq:Lefschetz-quadric}
\cA_0 = \cA_{-1} = \langle p^*\Db(Z) \otimes \cO_E, p^*\Db(Z,\beta) \otimes \cS(1) \rangle
\quad\text{and}\quad 
\cA_{-2} = \dots = \cA_{1-2n} = p^*\Db(Z) \otimes \cO_E.
\end{equation}
In particular, the condition~\eqref{eq:ca1} holds.
Moreover, the condition~\eqref{eq:Lefschetz-e0} follows from~\eqref{eq:db-quadric-odd}.
Now Theorem~\ref{theorem:simultaneous} applies, providing the required semiorthogonal decompositions 
of the categories~$\Db(\tX)$ and~$\Db(\Bl_Z(X_o))$
and proving that the category~$\cD = \{ F \in \Db(\tX) \mid \eps^*F \in \cA_0 \}$ fits into~\eqref{eq:sod-nodal-general} 
and provides a simultaneous categorical resolution of~$(X,X_o)$.
\end{proof}

Now we can deduce Theorem~\ref{theorem:simultaneous-nodal}.

\begin{proof}[Proof of Theorem~\textup{\ref{theorem:simultaneous-nodal}}]
We apply Theorem~\ref{theorem:nodal-general} with~$Z = \{x_o\} \cong \Spec(\kk)$.
If the double covering~$\tZ \to Z$ splits (this is certainly true if~$\kk$ is algebraically closed)  
we obtain the required simultaneous categorical resolution.
Otherwise, $\tZ = \Spec(\kk')$, where~$\kk'/\kk$ is a quadratic extension,
hence after extension of scalars to~$\kk'$ the double covering splits, and the same argument applies.
\end{proof}

In the setup of Theorem~\ref{theorem:simultaneous-nodal}, if the field~$\kk$ is algebraically closed, 
hence the double covering splits and the Brauer class vanishes, 
the semiorthogonal decomposition~\eqref{eq:sod-nodal-general} takes the form
\begin{equation}
\label{eq:sod-nodal}
\Db(\tX) = \langle \cO_E(1-2n), \dots, \cO_E(-2), \cO_E(-1), \eps_*\cS, \cD \rangle,
\end{equation}
where recall that~$\eps \colon E \hookrightarrow \tX = \Bl_{x_o}(X)$ is the embedding of the exceptional divisor 
and~$\cS$ is a spinor bundle on the smooth quadric~$E$.
Note that the subcategory~$\cD \subset \Db(\tX)$ defined by~\eqref{eq:sod-nodal} depends on the choice of the spinor bundle~$\cS$.
In the rest of this subsection we show that the two different choices 
result in equivalent categories related by a ``categorical flop'' (see Remark~\ref{rem:cat-flop}).

Recall that the two spinor bundles~$\cS_+$ and~$\cS_-$ on a quadric of dimension~$2n$ are completely orthogonal 
and are related by the exact sequences
\begin{equation}
\label{eq:spinor-sequences}
0 \to \cS_- \to \cO^{\oplus N} \to \cS_+(1) \to 0
\qquad\text{and}\qquad 
0 \to \cS_+ \to \cO^{\oplus N} \to \cS_-(1) \to 0,
\end{equation}
where $N = 2^n$.

\begin{proposition}
Let~$\kk$ be an algebraically closed field of characteristic not equal to~$2$.
Assume~$X$ has an ordinary double point at~$x_0$ and is smooth elsewhere.
Let~$\tX = \Bl_{x_o}(X)$ be the blowup with the exceptional divisor~$\eps \colon E \hookrightarrow \tX$ 
\textup(isomorphic to a smooth quadric of dimension~$2n$\textup).
Let~$\cS_+$ and~$\cS_-$ be the two spinor bundles on~$E$ and let~$\cD_+,\cD_- \subset \Db(\tX)$ 
be the categorical resolutions defined by~\eqref{eq:sod-nodal}
for the choices~$\cS = \cS_+$ and~$\cS = \cS_-$, respectively.
Then the following are true:
\begin{enumerate}\renewcommand{\theenumi}{\roman{enumi}}
\item 
\label{item:es-cspm}
The objects~$\eps_*\cS_+$ and~$\eps_*\cS_-$ in~$\Db(\tX)$ are exceptional, and, moreover,
\begin{equation*}
\eps_*\cS_- \in 
\cD_-^\perp \cap {}^\perp\cD_+,
\qquad 
\eps_*\cS_+ \in 
\cD_+^\perp \cap {}^\perp\cD_-.
\end{equation*}
\item 
\label{item:cepm}
The right mutations~$\cK_+ \coloneqq \bR_{\eps_*\cS_+}(\eps_*\cS_-)$ and~$\cK_- \coloneqq \bR_{\eps_*\cS_-}(\eps_*\cS_+)$ 
fit into distinguished triangles
\begin{equation}
\label{eq:ce_pm}
\cK_+ \to \eps_*\cS_- \to \eps_*\cS_+[2],
\qquad\text{and}\qquad 
\cK_- \to \eps_*\cS_+ \to \eps_*\cS_-[2],
\end{equation}
and we have~$\cK_+ \in \cD_+$ and~$\cK_- \in \cD_-$.
\item 
\label{item:equivalences}
The right mutation functors
\begin{equation*}
\Phi_- \coloneqq \bR_{\eps_*\cS_-} \colon \cD_+ \to \cD_-
\qquad\text{and}\qquad 
\Phi_+ \coloneqq \bR_{\eps_*\cS_+} \colon \cD_- \to \cD_+
\end{equation*}
are equivalences of categories.
Moreover, the objects~$\cK_\pm$ are spherical and the compositions of equivalences~$\Phi_+ \circ \Phi_-$ and~$\Phi_- \circ \Phi_+$
are isomorphic to the spherical twists with respect to~$\cK_+$ and~$\cK_-$, respectively.
\end{enumerate}
\end{proposition}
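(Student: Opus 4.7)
The proof rests on three ingredients: the blowup Ext formula, Kapranov's exceptional collection on the quadric $E$, and the spinor sequences~\eqref{eq:spinor-sequences}. Since $\eps\colon E\hookrightarrow\tX$ is a smooth Cartier divisor with conormal bundle $L=\cO_E(1)$, one has $\eps^*\eps_*F\simeq F\oplus F\otimes L[1]$, whence
\[
\Ext^k_{\tX}(\eps_*F,\eps_*G)\cong\Ext^k_E(F,G)\oplus\Ext^{k-1}_E(F,G\otimes L^{-1}).
\]
Set $\cP\coloneqq\langle\cO_E(1-2n),\ldots,\cO_E(-1)\rangle$ for the common initial part of~\eqref{eq:sod-nodal}, so that $\cP^\perp=\langle\eps_*\cS_+,\cD_+\rangle=\langle\eps_*\cS_-,\cD_-\rangle$: two semiorthogonal decompositions of the same ambient category.

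For part~\eqref{item:es-cspm}, exceptionality of $\eps_*\cS_\pm$ is a direct application of the Ext formula: the first summand is $\kk$, and the second summand $\Ext^{\bullet-1}_E(\cS_\pm,\cS_\pm(-1))$ vanishes by the Lefschetz semiorthogonality built into~\eqref{eq:Lefschetz-quadric}. The inclusion $\eps_*\cS_\pm\in\cD_\pm^\perp$ is immediate from~\eqref{eq:sod-nodal}. For the crossed vanishing $\eps_*\cS_\mp\in{}^\perp\cD_\pm$, the adjunction $\Hom^\bullet_{\tX}(\eps_*\cS_\mp,F)\cong\Hom^\bullet_E(\cS_\mp,\eps^!F)$ for $F\in\cD_\pm$, with $\eps^!F\cong\eps^*F\otimes L^{-1}[-1]$, reduces the question—since by definition $\eps^*F\in\cA_0$ is generated by $\cO$ and $\cS_\pm(1)$—to $\Ext^\bullet_E(\cS_\mp,\cS_\pm)=0$ (complete orthogonality) together with $\Ext^\bullet_E(\cS_\mp,\cO(-1))=0$, the latter following by Serre duality from $H^\bullet(E,\cS_\mp(1-2n))=0$, which is a consequence of Kapranov's semiorthogonality.

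For part~\eqref{item:cepm}, the central computation is
\[
\Ext^\bullet_{\tX}(\eps_*\cS_\mp,\eps_*\cS_\pm)\cong\Ext^{\bullet-1}_E(\cS_\mp,\cS_\pm(-1))\cong\kk[-2],
\]
where the first isomorphism uses the Ext formula and $\Ext^\bullet_E(\cS_+,\cS_-)=0$, and the second comes from the long exact sequence associated to the $\cO(-1)$-twist $0\to\cS_\pm(-1)\to\cO(-1)^{\oplus N}\to\cS_\mp\to 0$ of~\eqref{eq:spinor-sequences}. The generator of $\Ext^2$ supplies the canonical arrow $\eps_*\cS_\mp\to\eps_*\cS_\pm[2]$ whose fiber defines $\cK_\pm$, producing~\eqref{eq:ce_pm}. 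That $\cK_\pm\in\cD_\pm$ follows from $\cK_\pm\in\cP^\perp$ (since both $\eps_*\cS_\pm$ lie there) and $\cK_\pm\in{}^\perp\eps_*\cS_\pm$ (a direct diagram chase on~\eqref{eq:ce_pm}, using that the universal map realizes the generator of $\Ext^2(\eps_*\cS_\mp,\eps_*\cS_\pm)$), so that $\cK_\pm\in\cP^\perp\cap{}^\perp\eps_*\cS_\pm=\cD_\pm$.

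For part~\eqref{item:equivalences}, the functors $\Phi_\pm\colon\cD_\mp\to\cD_\pm$ are well-defined by the same intersection argument applied to arbitrary objects, and they are mutually inverse equivalences by the standard mutation formalism for two semiorthogonal decompositions of a common ambient category with exceptional ``left'' summands. The spherical property of $\cK_\pm$ is then extracted by applying $\Hom(\cK_\pm,-)$ and $\Hom(-,\cK_\pm)$ to~\eqref{eq:ce_pm}, using exceptionality of $\eps_*\cS_\pm$ and the preceding $\Ext^2$-computation; this yields $\Ext^\bullet(\cK_\pm,\cK_\pm)\cong\kk\oplus\kk[-d]$ for the appropriate $d$, and the required Serre-duality condition comes from the Serre functor of the admissible subcategory $\cD_\pm$ inherited from $\Db(\tX)$. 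The identification $\Phi_+\circ\Phi_-\cong T_{\cK_+}$ and its mirror is the main obstacle: one constructs a canonical natural transformation from the octahedral axiom applied to~\eqref{eq:ce_pm} and verifies it is an isomorphism either by a diagram chase or by comparing Fourier--Mukai kernels. This is the categorical analogue of the Bondal--Orlov equivalence for conifold flops and fits the spherical functor / twist paradigm of Anno--Logvinenko; the real content is therefore a careful triangle-chase rather than any further Ext computation.
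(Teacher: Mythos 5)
The approach in parts~\eqref{item:es-cspm} and~\eqref{item:cepm} is essentially correct and parallel to the paper's, with a slightly different organization: you package the adjunction and the triangle for $\eps^*\eps_*$ as a split formula $\Ext^k_\tX(\eps_*F,\eps_*G)\cong\Ext^k_E(F,G)\oplus\Ext^{k-1}_E(F,G\otimes L^{-1})$ (which is valid for a Cartier divisor because the natural map $\cO_E(-E)\to\cO_E$ is zero, so $\eps^*\eps_*$ splits), whereas the paper works directly with the triangle $\eps^*\eps_*\cS_\pm\to\cS_\pm\to\cS_\pm(1)[2]$; either way one term vanishes and the computation is the same. You deduce $\Ext^\bullet(\cS_\mp,\cO(-1))=0$ by Serre duality and a vanishing $H^\bullet(E,\cS_\mp(1-2n))=0$, while the paper gets this for free from the Lefschetz semiorthogonality $\cA_{-1}^+(-1)\perp\cA_0^+$ via the equality $\cA_0=\cA_{-1}$; your route is more computational but works. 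A minor notational slip: what you denote $\cP^\perp$ is the \emph{left} orthogonal ${}^\perp\cP$ (it is what the paper calls $\tcD$).

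The genuine gap is in part~\eqref{item:equivalences}. You correctly note that $\Phi_\pm$ are mutation equivalences between orthogonals of a common exceptional object, and the sphericality of $\cK_\pm$ can indeed be read off the triangles~\eqref{eq:ce_pm} (together with Serre duality inherited from the admissibility of $\cD_\pm$). But for the central claim --- that $\Phi_+\circ\Phi_-$ and $\Phi_-\circ\Phi_+$ \emph{are} the spherical twists around $\cK_+$ and $\cK_-$ --- you only sketch a plan (``octahedral axiom... diagram chase... comparing Fourier--Mukai kernels'') and explicitly flag it as ``the main obstacle'' without proving it. The paper closes this gap cleanly: set $\tcD={}^\perp\langle\cO_E(1-2n),\dots,\cO_E(-1)\rangle$ and observe that the four semiorthogonal decompositions
\[
\tcD=\langle\eps_*\cS_+,\cD_+\rangle=\langle\cD_-,\eps_*\cS_+\rangle=\langle\eps_*\cS_-,\cD_-\rangle=\langle\cD_+,\eps_*\cS_-\rangle
\]
form a $4$-periodic sequence, so that the spherical twist identification follows directly from~\cite[Theorem~3.11]{HLS}. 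Your proof proposal stops short precisely where it needs a concrete argument, so as written it does not establish part~\eqref{item:equivalences}.
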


\begin{proof}
We will denote by~$\cA_k^\pm$, $1 - 2n \le k \le 0$, the components of the semiorthogonal decompositions~\eqref{eq:Lefschetz-quadric}
(cf.~Example~\ref{ex:nodal})
corresponding to the choices~$\cS = \cS_+$ and~$\cS = \cS_-$, respectively, so that in particular
\begin{equation}
\label{eq:ca0-pm}
\cA_0^+ = \cA_{-1}^+ \coloneqq \langle \cO, \cS_+(1) \rangle = \langle \cS_-, \cO \rangle
\qquad\text{and}\qquad 
\cA_0^- = \cA_{-1}^- \coloneqq \langle \cO, \cS_-(1) \rangle = \langle \cS_+, \cO \rangle,
\end{equation}
where the equalities in the right hand sides follow from~\eqref{eq:spinor-sequences}.

First, recall that~$\cS_+$ and~$\cS_-$ are exceptional and completely orthogonal on~$E$.
Moreover, $\cS_\mp \in \cA^\pm_{-1}$, hence the functor~$\eps_*$ is fully faithful on them by Theorem~\ref{theorem:categorical};
thus the sheaves~$\eps_*\cS_+$ and~$\eps_*\cS_-$ are exceptional.

Next, we note that $\eps_*\cS_\pm \in \cD_\pm^\perp$ by~\eqref{eq:sod-nodal} for~$\cS = \cS_\pm$.
Let us prove that~$\eps_*\cS_- \in {}^\perp\cD_+$.
For this take~\mbox{$F \in \cD_+$} and note that since the normal bundle of~$E$ is~$\cO_E(-1)$
it follows from Grothendieck duality that
\begin{equation*}
\Ext^\bullet(\eps_*\cS_-, F) \cong
\Ext^\bullet(\cS_-, \eps^!F) \cong
\Ext^\bullet(\cS_-, \eps^*F(-1)[-1]).
\end{equation*}
Since~$\cS_- \in \cA_0^+$ and~$\eps^*F(-1) \in \cA_0^+(-1) = \cA_{-1}^+(-1)$ by~\eqref{eq:ca0-pm},
the above space vanishes, hence indeed we have~\mbox{$\eps_*\cS_- \in {}^\perp\cD_+$}.
The containment~$\eps_*\cS_+ \in {}^\perp\cD_-$ is analogous, so~\eqref{item:es-cspm} is proved.

Next, 
since the conormal bundle of~$E$ is~$\cO_E(1)$, we have the distinguished triangle
\begin{equation*}
\eps^*\eps_*\cS_\pm \to \cS_\pm \to \cS_\pm(1)[2].
\end{equation*}
Using this triangle combined with adjunction and orthogonality of~$\cS_+$ and~$\cS_-$ on~$E$, we obtain
\begin{equation*}
\Ext^\bullet(\eps_*\cS_\pm,\eps_*\cS_\mp) \cong 
\Ext^\bullet(\cS_\pm(1)[1],\cS_\mp).
\end{equation*}
From~\eqref{eq:spinor-sequences} we conclude that the right side is equal to~$\kk[-2]$, hence
\begin{equation}
\label{eq:ext-cs-pm}
\Ext^\bullet(\eps_*\cS_\pm,\eps_*\cS_\mp) \cong \kk[-2].
\end{equation} 
This shows that the mutation triangles defining~$\cK_+ \coloneqq \bR_{\eps_*\cS_+}(\eps_*\cS_-)$ 
and~$\cK_- \coloneqq \bR_{\eps_*\cS_-}(\eps_*\cS_+)$ 
take the form~\eqref{eq:ce_pm}.

It follows from~\eqref{eq:sod-nodal} for~$\cS = \cS_+$ that to prove the containment~$\cK_+ \in \cD_+$ 
we must check that~$\cK_+$ is left-orthogonal to~$\cO_E(-i)$ with $1 \le i \le 2n-1$ and to~$\eps_*\cS_+$.
The first part of orthogonality is immediate from~\eqref{eq:ce_pm} and the two decompositions~\eqref{eq:sod-nodal} for~$\cS = \cS_+$ and~$\cS_-$,
while the second part follows from the definition of~$\cK_+$ as right mutation.
The containment $\cK_- \in \cD_-$ is proved analogously.
This proves~\eqref{item:cepm}.

To prove~~\eqref{item:equivalences} set
\begin{equation*}
\tcD = {}^\perp \langle \cO_E(1-2n), \dots, \cO_E(-2), \cO_E(-1) \rangle \subset \Db(\tX).
\end{equation*}
The semiorthogonal decompositions~\eqref{eq:sod-nodal} for~$\cS = \cS_+$ and~$\cS = \cS_-$ imply that
\begin{equation*}
\tcD = \langle \eps_*\cS_+, \cD_+ \rangle = \langle \eps_*\cS_-, \cD_- \rangle.
\end{equation*}
On the other hand, we also have 
\begin{equation*}
\tcD = \langle \cD_-, \eps_*\cS_+ \rangle = \langle \cD_+, \eps_*\cS_- \rangle;
\end{equation*}
indeed, semiorthogonality of the components is proved in part~\eqref{item:es-cspm} and generation follows from~\eqref{eq:ce_pm} 
and the containments~$\cK_\pm \in \cD_\pm$ proved in part~\eqref{item:cepm}.
So, we see that the functors~$\Phi_\pm$ are just the equivalences
\begin{equation*}
(\eps_*\cS_+)^\perp \xrightarrow{\ \sim\ } {}^\perp(\eps_*\cS_+)
\qquad\text{and}\qquad
(\eps_*\cS_-)^\perp \xrightarrow{\ \sim\ } {}^\perp(\eps_*\cS_-)
\end{equation*}
given by the right mutation functors in~$\tcD$.
The fact that their compositions are given by spherical twists follows from~\cite[Theorem~3.11]{HLS}
since the above decompositions of~$\tcD$ form a 4-periodic sequence of semiorthogonal decompositions.
\end{proof}

\begin{remark}
\label{rem:cat-flop}
Note that when~\mbox{$n = 1$} the simultaneous categorical resolution of a nodal surface degeneration 
constructed in Theorem~\ref{theorem:simultaneous-nodal} 
reduces to the geometric simultaneous resolution of Example~\ref{example:geometric}.
Indeed, the blowup~$\tX$ of~$X$ at~$x_o$ is isomorphic to the blowup of a small resolution of singularities~$X$ at its exceptional curve;
and comparing the blowup formula with the semiorthogonal decomposition~\eqref{eq:sod-nodal} (note that~$\cS = \cO_E(-1,0)$ in this case) 
gives an identification of the simultaneous categorical resolution~$\cD$ 
with the derived category of the geometric simultaneous resolution.
Furthermore, the categorical flop described above is equivalent to the standard Atiyah flop.
\end{remark}

\section{Application to cubic fourfolds}
\label{sec:cubic4}

In this section~$\kk$ is an algebraically closed field of characteristic not equal to~$2$.

For any cubic hypersurface $Y \subset \P^5$ there is a semiorthogonal decomposition~\cite{K10:cubic}
\begin{equation}
\label{eq:sod-dby}
\Db(Y) = \langle \cA_Y, \cO_Y, \cO_Y(1), \cO_Y(2) \rangle.
\end{equation}
If $Y$ is smooth the component~$\cA_Y$ is smooth and proper and it is a {\sf $K3$ category}
(i.e., its Serre functor is isomorphic to the shift functor~$[2]$), 
see~\cite[Corollary~4.4]{K04:cubic} or~\cite[Corollary~4.1]{K19}.

On the other hand, if $Y$ has a single ordinary double point~$y_o$, the category~$\cA_Y$ is no longer smooth and proper, 
but it has a nice categorical resolution provided by the derived category of a $K3$ surface~$S$
(it is also, of course, a $K3$ category), which is naturally associated to~$Y$, see~\cite[Theorem~5.2]{K10:cubic}.

More precisely, the linear projection $Y \dasharrow \P^4$ out of~$y_o$ induces an isomorphism
\begin{equation*}
\Bl_{y_o}(Y) \cong \Bl_S(\P^4),
\end{equation*}
where $S$ is a smooth complete intersection of a smooth 3-dimensional quadric~$Q \subset \P^4$ and a cubic hypersurface in~$\P^4$,
and the exceptional divisor of~$\Bl_{y_o}(Y)$ maps isomorphically onto~$Q$.
In this situation Theorem~\ref{theorem:categorical} shows that 
on the one hand, the categorical resolution~$\cD_Y$ of~$Y$ constructed in Example~\ref{ex:nodal} 
is a component of the semiorthogonal decomposition
\begin{equation}
\label{eq:db-bl-yo-y}
\Db(\Bl_{y_o}(Y)) = \langle \cO_Q(-2), \cO_Q(-1), \cD_Y \rangle,
\end{equation}
where~$Q$ is identified with the exceptional divisor of the blowup,
and on the other hand, by~\cite[Theorem~5.2]{K10:cubic} it has a semiorthogonal decomposition
\begin{equation}
\label{eq:cd-y}
\cD_Y = \langle \tilde\cA_Y, \cO_{\Bl_{y_o}(Y)}, \cO_{\Bl_{y_o}(Y)}(1), \cO_{\Bl_{y_o}(Y)}(2) \rangle,
\end{equation}
where the line bundles are pulled back from~$Y$ and
\begin{equation}
\label{eq:tca-y}
\tilde\cA_Y \simeq \Db(S).
\end{equation} 
In this section we prove Corollary~\ref{cor:cubic} by showing that $K3$ categories of both types 
($\cA_Y$ for smooth cubic fourfolds and~$\tilde\cA_Y$ for nodal ones)
fit into a single smooth and proper family.

First, we construct an appropriate geometric family of cubic fourfolds.

\begin{lemma}
\label{lemma:cubic-family}
For any cubic fourfold~$Y$ with a single ordinary double point~$y_o$ 
there is a family $f \colon X \to B$ of cubic fourfolds over a smooth pointed curve~$(B,o)$
in which the central fiber is~$X_o \cong Y$, the point~$y_o \in X$ is an ordinary double point, 
and the morphism~$f$ is smooth over~$B^o$.
\end{lemma}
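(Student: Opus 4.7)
The plan is to exhibit $X$ as a carefully chosen pencil of cubic fourfolds through $Y$, with the pencil direction at $[Y]$ tangent to the discriminant hypersurface in $|\cO_{\P^5}(3)|$ (so that the total space acquires an extra singularity at $y_o$) but otherwise generic. Fix a defining cubic $F_0 \in H^0(\P^5,\cO(3))$ of $Y$, take affine coordinates $x_1,\ldots,x_5$ on $\P^5$ centered at $y_o$, and write $F_0(x) = Q(x) + C_0(x)$, where $Q$ is a nondegenerate quadratic form (by the ODP hypothesis) and $C_0$ is cubic. Choose a second cubic $F_1 \in H^0(\P^5,\cO(3))$ subject to the following conditions: (a) $F_1(y_o) = 0$, so that locally $F_1 = L_1 + Q_1 + C_1$ with $L_1$ linear; (b) $L_1\,\mathrm{H}(Q)^{-1}L_1^T \neq 0$, i.e.\ $L_1$ is non-isotropic with respect to the dual of~$Q$; (c) $\nabla F_1(y_o) \neq 0$; (d) the pencil $\{F_0 + sF_1\}$ is not contained in the discriminant divisor of $|\cO(3)|$. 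Condition~(a) cuts out a hyperplane in the space of cubics, while (b)--(d) are nonempty Zariski-open conditions inside it, so a suitable $F_1$ exists. Take $B \subset \AA^1$ to be an affine neighborhood of~$0$ with parameter $s$, and define $X = V(F_0 + sF_1) \subset \P^5 \times B$ with $f$ the projection.

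The central fiber is $X_o = V(F_0) \cong Y$ by construction. To verify that $(y_o, 0)$ is an ordinary double point of~$X$, observe that $F_0(y_o) = 0$, $\nabla_x F_0(y_o) = 0$ (by the ODP on $Y$), and $F_1(y_o) = 0$ by~(a), so $(y_o, 0)$ is a critical point of the local equation $F_0(x) + s F_1(x)$. The Hessian at $(y_o, 0)$ in the coordinates $(x_1,\ldots,x_5,s)$ is the block matrix
\begin{equation*}
\begin{pmatrix} \mathrm{H}(Q) & L_1^T \\ L_1 & 0 \end{pmatrix},
\end{equation*}
whose determinant equals $-\det \mathrm{H}(Q) \cdot \bigl(L_1\,\mathrm{H}(Q)^{-1}L_1^T\bigr)$, which is nonzero by~(b). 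Since $\operatorname{char}(\kk) \neq 2$, a nondegenerate Hessian at a critical point of a hypersurface equation defines an ordinary double point, so $(y_o, 0)$ is an ODP of $X$.

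The hard part will be to arrange that $f$ is smooth over $B^o$, which requires shrinking $B$ to exclude any nearby singular fibers. The argument is to show that $\Sigma \coloneqq \{s \in B : X_s \text{ is singular}\}$ is a finite subset of $B$ containing $o$, and then pass to a smaller affine neighborhood of~$o$ avoiding $\Sigma \setminus \{o\}$. At the point $y_o$ itself the fiber $X_s$ is singular only when $s\,\nabla F_1(y_o) = 0$, which by~(c) forces $s = 0$. Away from $y_o$, any singular point of a fiber $X_s$ corresponds to a singular member of the pencil $\{F_0 + sF_1\}$, hence to a value of $s$ at which the pencil, viewed as a line in $|\cO(3)|$, meets the discriminant divisor $\Delta$; this intersection is finite by~(d), since a line not contained in a hypersurface in projective space meets it in finitely many points. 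Thus $\Sigma$ is finite, and the claim follows.
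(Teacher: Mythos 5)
Your proof is correct, and its overall structure --- build a one-parameter family of cubics through $Y$, verify the node on the total space by an explicit local computation at $(y_o,0)$, then delete the finitely many singular fibers other than the central one --- matches the paper's. But the family is set up differently and the node-verification is correspondingly more involved. The paper chooses a smooth cubic $Y_1 = V(F_1)$ with $F_1(y_o)\neq 0$ and forms the family $(1-t^2)F_0 + t^2 F_1$ over $\AA^1_t$; since the $t$-dependence enters quadratically, the local equation at $(y_o,0)$ expands as $Q(x) + F_1(y_o)\,t^2 + \text{h.o.t.}$, and the ODP is immediate from a nondegenerate block-diagonal Hessian with no cross-term to worry about. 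This is exactly the double-cover mechanism of Example~\ref{example:geometric}: the family is the pullback along $u = t^2$ of the \emph{linear} pencil $(1-u)F_0 + uF_1$, whose total space is \emph{smooth} at $(y_o,0)$ because $F_1(y_o)\neq 0$. You instead work directly with a linear pencil $F_0 + sF_1$; this forces condition~(a), $F_1(y_o)=0$ (otherwise $\partial_s$ already shows the total space is smooth at $(y_o,0)$, contradicting the lemma), and you then need the second-order transversality condition~(b) to make the cross-term $L_1$ close up the Hessian into a nondegenerate form. The Hessian and Schur-complement computation is right, so both routes are valid; yours is more elementary (no double cover) at the cost of the extra genericity bookkeeping in (a)--(d), while the paper's is shorter and makes the parallel with Atiyah's trick transparent. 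One small redundancy: your condition~(c) is implied by~(b), since a covector that is non-isotropic for $\mathrm{H}(Q)^{-1}$ is in particular nonzero.
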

\begin{proof}
Let $F_0(x)$ and $F_1(x)$ be the equations of $Y \subset \P^5$ 
and of a smooth cubic fourfold $Y_1 \subset \P^5$ (in the same projective space) 
such that~$Y_1$ does not contain the singular point~$y_o \in Y$.
Consider the family
\begin{equation*}
X = \{ (x,t) \in \P^5 \times \AA^1 \mid (1-t^2)F_0(x) + t^2F_1(x) = 0 \}
\end{equation*}
of cubic fourfolds over~$\AA^1$. 
The fiber of~$X$ over~$0 \in \AA^1$ is~$Y$ and the fiber over~$1 \in \AA^1$ is~$Y_1$.
Since~$Y_1$ is smooth, the general fiber of the projection~$X \to \AA^1$ is smooth as well, hence the set of points~$t \in \AA^1$ 
such that the fiber~$X_t$ is singular is a finite set $T \subset \AA^1$ containing the point~$0$.
Furthermore, it is easy to see that~$y_o \in X$ is an ordinary double point of the total space.
Now we let 
\begin{equation*}
B \coloneqq \AA^1 \setminus ( T \setminus \{0\} )
\end{equation*}
be the complement of all points in~$T$ except for~$0$.
Then the morphism~$X_B \to B$ has the prescribed properties.
\end{proof}

Now, applying Theorem~\ref{theorem:simultaneous-nodal} to the family $X \to B$ constructed in Lemma~\ref{lemma:cubic-family}, 
we prove Corollary~\ref{cor:cubic}.

\begin{proof}[Proof of Corollary~\textup{\ref{cor:cubic}}]
Let~$\tX = \Bl_{y_o}(X)$ and let~$\cD \subset \Db(\tX)$ be the simultaneous categorical resolution of~$(X,X_o = Y)$
constructed in Theorem~\ref{theorem:simultaneous-nodal} (that relies on Theorem~\ref{theorem:nodal-general}), so that
\begin{equation*}
\Db(\tX) = \langle \cO_E(-3), \cO_E(-2), \cO_E(-1), \eps_*\cS, \cD \rangle.
\end{equation*}
Then we have
\begin{equation*}
\cD_{b} \simeq \Db(X_{b})
\qquad\text{for $b \ne o$,}
\end{equation*}
and~$\cD_o$ is the categorical resolution of the singular cubic fourfold~$Y$ that fits into the semiorthogonal decomposition
\begin{equation*}
\Db(\Bl_{y_o}(Y)) = \langle \cO_{E_o}(-2), \cO_{E_o}(-1), \cD_o \rangle,
\end{equation*}
where $E_o$ is the exceptional divisor of the blowup~$\Bl_{y_o}(Y)$, i.e., $E_o = Q$.
Therefore, $\cD_o$ coincides with the categorical resolution~$\cD_Y$ of~$Y$ defined by~\eqref{eq:db-bl-yo-y}.
Note further that the pullbacks $\cO_{\tX}(i)$ of the line bundles~$\cO_{\P^5}(i)$ are contained in the subcategory~$\cD$ for all~$i$.
Moreover, the triple~$(\cO_\tX,\cO_\tX(1),\cO_\tX(2))$ restricts to an exceptional triple in each fiber of~$\cD$ over~$B$,
therefore by~\cite[Theorem~2]{Sam} it is a $B$-exceptional triple and there is a $B$-linear semiorthogonal decomposition
\begin{equation*}
\cD = \langle \cA, \tf^*\Db(B) \otimes \cO_\tX, \tf^*\Db(B) \otimes \cO_\tX(1), \tf^*\Db(B) \otimes \cO_\tX(2) \rangle
\end{equation*}
defining the $B$-linear admissible subcategory~$\cA \subset \cD \subset \Db(\tX)$,
where $\tf$ is the composition $\tX \to X \to B$.
Taking the base change of the above decomposition to fibers over all points~$b \ne o$ and comparing it with~\eqref{eq:sod-dby}
we deduce that~$\cA_{b}$ is the $K3$ category of~$X_b$.
Similarly, the definition of the fiber~$\cA_o$ of~$\cA$ at~$o$ coincides with the definition~\eqref{eq:cd-y} of the category~$\widetilde\cA_Y$.
Finally, this category is equivalent to~$\Db(S)$ by~\eqref{eq:tca-y}.
\end{proof}

\end{document}